\numberwithin{equation}{section}
\theoremstyle{plain}
\newtheorem{theorem}[equation]{Theorem}
\newtheorem{corollary}[equation]{Corollary}
\newtheorem*{claim}{Claim}
\newtheorem{proposition}[equation]{Proposition}
\newtheorem{lemma}[equation]{Lemma}
\theoremstyle{definition}
\newtheorem{definition}[equation]{Definition}
\theoremstyle{remark}
\newcommand{\acts}{\operatorname{\curvearrowright}}
\newcommand{\ad}{\operatorname{ad}}
\newcommand{\Ad}{\operatorname{Ad}}
\newcommand{\al}{\alpha}
\newcommand{\aut}{\operatorname{Aut}}
\newcommand{\be}{\beta}
\newcommand{\ben}{\begin{enumerate}}
\newcommand{\bit}{\begin{itemize}}
\newcommand{\C}{\mathbb{C}}
\newcommand{\cent}{\operatorname{Center}}
\newcommand{\D}{\partial}
\newcommand{\de}{\delta}
\newcommand{\diag}{\operatorname{diag}}
\newcommand{\een}{\end{enumerate}}
\newcommand{\eit}{\end{itemize}}
\newcommand{\eps}{\varepsilon}
\newcommand{\fg}{\mathfrak{g}}
\newcommand{\fh}{\mathfrak{h}}
\newcommand{\fk}{\mathfrak{k}}
\newcommand{\fl}{\mathfrak{l}}
\newcommand{\fn}{\mathfrak{n}}
\newcommand{\fp}{\mathfrak{p}}
\newcommand{\F}{\mathbb{F}}
\newcommand{\f}{\mathcal{F}}
\newcommand{\ga}{\gamma}
\newcommand{\gl}{\operatorname{GL}}
\newcommand{\gll}{\operatorname{\fg\fl}}
\newcommand{\h}{\mathcal{H}}
\renewcommand{\H}{\mathbb{H}}
\newcommand{\id}{\operatorname{id}}
\newcommand{\la}{\lambda}
\newcommand{\La}{\Lambda}
\newcommand{\loc}{\operatorname{loc}}
\newcommand{\lra}{\longrightarrow}
\newcommand{\om}{\omega}
\newcommand{\Om}{\Omega}
\newcommand{\pgl}{\operatorname{PGL}}
\newcommand{\R}{\mathbb{R}}
\newcommand{\ra}{\rightarrow}
\newcommand{\rank}{\operatorname{rank}}
\definecolor{gray}{gray}{0.7}
\newcommand{\restr}{\mbox{\Large \(|\)\normalsize}}
\newcommand{\si}{\sigma}
\newcommand{\Si}{\Sigma}
\renewcommand{\sl}{\operatorname{SL}}
\newcommand{\Span}{\operatorname{span}}
\newcommand{\stab}{\operatorname{Stab}}
\renewcommand{\th}{\theta}
\newcommand{\vol}{\operatorname{vol}}
\newcommand{\we}{\wedge}
\newcommand{\wt}{\operatorname{wt}}
\def\XXint#1#2#3{{\setbox0=\hbox{$#1{#2#3}{\int}$ }
\vcenter{\hbox{$#2#3$ }}\kern-.6\wd0}}
\begin{document}

\begin{abstract}
Let  $N\subset\gl(n,\R)$  be the group of upper triangular matrices with $1$s on the diagonal, equipped with the standard Carnot group structure.  We show that quasiconformal homeomorphisms of $N$, and more generally Sobolev mappings with nondegenerate Pansu differential,   are rigid   when $n\geq 4$; this settles the Regularity Conjecture for such groups.  This result is deduced from a rigidity theorem for the manifold of complete flags in $\R^n$.    Similar results also hold in the complex and quaternionic cases.
\end{abstract}

\title{Rigidity of flag manifolds}
\author{Bruce Kleiner}
\thanks{BK was supported by NSF grants DMS-1711556 and DMS-2005553, and a Simons Collaboration grant.}
\email{bkleiner@cims.nyu.edu}
\address{Courant Institute of Mathematical Science, New York University, 251 Mercer Street, New York, NY 10012}
\author{Stefan M\"uller}
\thanks{SM has been supported by the Deutsche Forschungsgemeinschaft (DFG, German Research Foundation) through
the Hausdorff Center for Mathematics (GZ EXC 59 and 2047/1, Projekt-ID 390685813) and the 
collaborative research centre  {\em The mathematics of emerging effects} (CRC 1060, Projekt-ID 211504053).  This work was initiated during a sabbatical of SM at the Courant Institute and SM would like to thank  R.V. Kohn and the Courant Institute
members and staff for 
their  hospitality and a very inspiring atmosphere.}
\email{stefan.mueller@hcm.uni-bonn.de}
\address{Hausdorff Center for Mathematics, Universit\"at Bonn, Endenicher Allee 60, 53115 Bonn}
\author{Xiangdong Xie}
\thanks{XX has been supported by Simons Foundation grant \#315130.}
\email{xiex@bgsu.edu}
\address{Dept. of Mathematics and Statistics, Bowling Green State University, Bowling Green, OH 43403}

\maketitle

\tableofcontents

\setcounter{section}{0}
\section{Introduction}
\label{sec_intro}

This is part of a series of papers \cite{KMX1,KMX2,kmx_approximation_low_p,kmx_rumin} on geometric mapping theory in Carnot groups, in which we establish regularity, rigidity, and partial rigidity results for bilipschitz, quasiconformal, or more generally, Sobolev mappings, between Carnot groups.  Our focus in this paper is on the Iwasawa $N$ group for $\sl(n,\F)$ and the associated flag manifold, for $\F\in \{\R,\C,\H\}$.

To state our main theorem we first briefly recall a few facts; see Section~\ref{sec_preliminaries} for details and references.  For simplicity we will stick to the case $\F=\R$ in this introduction.  

Fix $n\geq 3$.  
Let $N\subset \gl(n,\R)$ denote the subgroup of upper triangular matrices with $1$s on the diagonal, with Lie algebra $\fn\subset \gll(n,\R)$, and Carnot group structure given by the grading $\fn=\oplus_i V_i$, where $V_i=\{A\in \fn\mid A_{jk}=0\;\text{if}\; k\neq j+i\}$
corresponds to the $i$th superdiagonal.  Let $\f$ be the manifold of complete flags in $\R^n$, i.e. the collection of nested families of linear subspaces
$$
\{0\}\subsetneq W_1\subsetneq\ldots\subsetneq W_{n-1}\subsetneq\R^n\,.
$$
We consider a standard subbundle $W$ of the tangent bundle  $T\f$ which is invariant under both the natural action $\gl(n,\R)\acts \f$ and the diffeomorphism $\psi:\f\ra\f$ induced by orthogonal complement: 
$$
\psi(W_1,\ldots,W_{n-1})=(W_{n-1}^\perp,\ldots,W_1^\perp)\,.
$$
There is a dense open subset $\hat N\subset \f$ and a diffeomorphism $\al:N\ra \hat N$ carrying the horizontal bundle $V_1\subset TN$ to $W\restr_{\hat N}$, i.e.   $(\hat N,W\restr_{\hat N})$ is contact diffeomorphic to $(N,V_1)$.
 
 We equip $(\f,W)$ with a Carnot-Caratheodory distance $d_{CC}$, and let $\nu$ denote the homogenous dimension of $(\f,W)$.  
 
Our main result is a rigidity theorem for Sobolev mappings satisfying a nondegeneracy condition:
\begin{theorem}
\label{thm_main}
Suppose $n\geq 4$.  Let $U\subset \f$ be a connected open subset, and $f:U\ra \f$ be a $W^{1,p}_{\loc}$-mapping for $p>\nu$, such that the Pansu differential is an isomorphism almost everywhere.  Then $f$ is the restriction of a diffeomorphism $\f\ra \f$   of the form $\psi^{\eps}\circ g$ where $g\in \gl(n,\R)$, $\eps\in \{0,1\}$. 
\end{theorem}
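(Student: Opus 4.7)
The plan is to combine three ingredients: Pansu differentiability for Sobolev mappings at super-critical exponent $p>\nu$, an algebraic classification of graded Lie algebra automorphisms of $\fn$ for $n\geq 4$, and a regularity/rigidity bootstrap converting pointwise algebraic data into a global classical solution. Pulling back via the chart $\alpha:N\ra\hat N$ reduces the problem, on the dense open set $U\cap\hat N\cap f^{-1}(\hat N)$, to analyzing a $W^{1,p}_\loc$ contact mapping $\tilde f:=\alpha^{-1}\circ f\circ\alpha$ between open subsets of $N$ with nondegenerate Pansu differential almost everywhere. Because $p>\nu$, Pansu's Rademacher theorem yields a.e.\ pointwise Pansu differentiability, and nondegeneracy together with the contact property forces $D_P\tilde f(x)$ to be a graded Lie algebra automorphism of $\fn$ at almost every $x$.

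The algebraic core is to show that, for $n\geq 4$, every graded Lie algebra automorphism of $\fn$ is realized as the Pansu differential at the identity of (the local lift to $N$ of) an element of $\gl(n,\R)\sqcup\psi\cdot \gl(n,\R)$ acting on $\f$. Since $V_1$ bracket-generates $\fn$, any such automorphism is determined by its restriction to $V_1$; one then classifies the admissible restrictions by analyzing which linear isomorphisms of $V_1$ extend consistently through the higher layers $V_i$ using the Jacobi identity and the grading $[V_i,V_j]\subset V_{i+j}$. The admissible restrictions turn out to be exactly those arising from the adjoint action of the Borel subgroup $B\subset\gl(n,\R)$ together with the single outer involution induced by $\psi$. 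The hypothesis $n\geq 4$ is critical here: when $n=3$, $\fn$ is the Heisenberg algebra and its graded automorphism group is much larger, admitting arbitrary conformal-symplectic maps of $V_1$.

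Once the Pansu differential is known to take values in the finite-dimensional real-analytic family $\mathcal{G}:=\gl(n,\R)\sqcup\psi\cdot\gl(n,\R)$, I would apply the regularity/rigidity machinery developed in the earlier papers \cite{KMX1,KMX2}. At a Lebesgue point $x_0$, compose $\tilde f$ with a local inverse from $\mathcal{G}$ chosen to match $D_P\tilde f(x_0)$: the composition has Pansu differential equal to the identity at almost every nearby point, hence by the horizontal Poincar\'e inequality combined with the H\"older continuity from $p>\nu$ it is constant on a neighborhood. Thus $\tilde f$ locally coincides with an element of $\mathcal{G}$, and by connectedness of $U$ together with real-analytic continuation of members of $\mathcal{G}$, these local matches piece together to the restriction of a single $\psi^\eps\circ g$ with $g\in\gl(n,\R)$, $\eps\in\{0,1\}$; this global diffeomorphism of $\f$ is the desired extension of $f$.

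The principal obstacle is the algebraic classification: rigorously eliminating all graded automorphisms outside the $\gl(n,\R)$-$\psi$ family requires a layer-by-layer analysis of the bracket relations in $\fn$, and the numerical condition $n\geq 4$ must be leveraged precisely in this combinatorics to rule out the low-dimensional exotica. A secondary analytic obstacle is executing the ``compose and conclude constant'' step for a Sobolev rather than smooth map; the margin $p>\nu$ is essential for pointwise evaluability of $D_P\tilde f$, for stability of the composition, and for the step that promotes ``vanishing Pansu differential a.e.'' to ``locally constant''.
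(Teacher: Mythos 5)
Your first two ingredients match the paper (Pansu differentiability at supercritical exponent; the algebraic classification of graded automorphisms of $\fn$, which is Lemma~\ref{lem_properites_aut_gr_n}), but the third step --- the ``regularity/rigidity bootstrap'' --- contains a genuine gap, and it is exactly the point where the real work of the paper lives.

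The flawed claim is: ``At a Lebesgue point $x_0$, compose $\tilde f$ with a local inverse from $\mathcal{G}$ chosen to match $D_P\tilde f(x_0)$: the composition has Pansu differential equal to the identity at almost every nearby point.'' This is false. Knowing $D_P\tilde f(x_0)=A_0$ at the single Lebesgue point $x_0$ gives no control on $D_P\tilde f(x)$ at nearby $x$; the Pansu differential of $A_0^{-1}\circ\tilde f$ at $x$ is $A_0^{-1}\circ D_P\tilde f(x)$, which need not be the identity because $D_P\tilde f$ can oscillate from point to point within the graded automorphism group. The Lebesgue point property only controls $L^p$-averages of $D_P\tilde f-A_0$, not pointwise values; and since the graded automorphism group $\mathcal{G}\cong\{\pm 1\}\ltimes\text{(diagonal torus)}$ has positive dimension, there is continuous as well as discrete freedom to oscillate. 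So the premise of the ``vanishing Pansu differential a.e.\ implies locally constant'' step is never established, and the argument stalls precisely at the point where one must rule out oscillation of the differential --- which is the central difficulty of the theorem.

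The paper handles this obstacle in two stages, neither of which is a local-constancy bootstrap. First, Lemma~\ref{lem_main_no_oscillation} uses the Pullback Theorem (Theorem~\ref{co:pull_back2}, a differential-forms identity valid for Sobolev maps) to rule out the \emph{discrete} oscillation between the two $\tau$-components of $\mathcal{G}$: after composing with $\tau$ if needed, the differential preserves each line $\R X_{i,i+1}$ almost everywhere. This gives, via Corollary~\ref{cor_preservation_coset_foliation}, that $f$ is fibration-preserving with respect to all $\pi_j$, which is a robust \emph{continuous} (not a.e.) property. Second, for the remaining \emph{continuous} oscillation within the diagonal component, the paper switches entirely away from the Pansu differential and invokes the incidence-geometric Proposition~\ref{thm_fiber_preserving_projective}, an adaptation of the Fundamental Theorem of Projective Geometry: a continuous, locally fibration-preserving map which is Pansu-differentiable with nondegenerate differential at a \emph{single} point must agree with an element of $\pgl(n,\R)$. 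Only at that stage does connectedness piece the local projectivities together. Both the Pullback-Theorem step and the projective-geometry step are absent from your outline and cannot be replaced by a Poincar\'e-inequality argument of the type you describe.
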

In the theorem and below we use the convention that $\psi^0=\id$ and $\psi^1=\psi$.
 Note that the rigidity assertion is false when $n=3$, because $(\f,W)$ is locally equivalent to the Heisenberg group, and hence has an infinite dimensional group of contact diffeomorphisms, which are all Sobolev mappings with nondegenerate Pansu differential.  Quasiconformal homeomorphisms 
are $W^{1,p}_{\loc}$-mappings for some $p>\nu$  \cite{heinonen_koskela}, and therefore we obtain: 
\begin{corollary}
\label{cor_main_cor}
When $n\geq 4$, then any quasiconformal homeomorphism $\f\supset U\ra U'\subset \f$ between connected open subsets is the restriction of a diffeomorphism  of the form $\psi^{\eps}\circ g$ for some $g\in \gl(n,\R)$, $\eps\in \{0,1\}$.
\end{corollary}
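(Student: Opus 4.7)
The plan is to deduce Corollary~\ref{cor_main_cor} directly from Theorem~\ref{thm_main} by verifying its two hypotheses --- $W^{1,p}_{\loc}$-regularity for some $p>\nu$, and a.e.\ nondegeneracy of the Pansu differential --- for a quasiconformal homeomorphism $f:U\ra U'$ between connected open subsets of $\f$.

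For the Sobolev regularity step I would invoke the Heinonen--Koskela theorem \cite{heinonen_koskela}: on a metric measure space of locally $Q$-regular geometry supporting a $(1,Q)$-Poincar\'e inequality, every quasiconformal homeomorphism lies in $W^{1,p}_{\loc}$ for some $p>Q$. Via the local contact diffeomorphism $\al:N\ra \hat N\subset\f$ (which sends $V_1$ to $W\restr_{\hat N}$, hence is locally bilipschitz for the CC distances), the space $(\f,W)$ inherits Ahlfors $\nu$-regularity and the $(1,\nu)$-Poincar\'e inequality from the Carnot group $N$, so $f\in W^{1,p}_{\loc}(U,\f)$ for some $p>\nu$.

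For the nondegeneracy of the Pansu differential $D_Pf$, I would again work in $\al$-charts and combine Pansu's differentiation theorem with the standard QC distortion argument: a quasiconformal homeomorphism of a Carnot group is Pansu differentiable almost everywhere, its differential is a graded Lie algebra automorphism of $\fn$ at a.e.\ point of existence, and the bounded-distortion hypothesis together with the volume-derivative/Jacobian argument of Mostow--Pansu rules out degeneracy on a set of positive measure. Transferring through $\al$ gives that $D_Pf$ is an isomorphism a.e.\ on $U$.

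With both hypotheses verified, Theorem~\ref{thm_main} produces a global diffeomorphism $F=\psi^{\eps}\circ g$ of $\f$, with $g\in\gl(n,\R)$ and $\eps\in\{0,1\}$, such that $F\restr_U=f$; since $f$ is a homeomorphism of $U$ onto $U'$, automatically $F(U)=U'$. The genuine obstacle is not in this reduction --- which is routine --- but inside Theorem~\ref{thm_main} itself; at the level of this corollary the only subtlety is the faithful transfer of standard Carnot-group QC facts to $(\f,W)$ via the contact chart $\al$, which is unobstructed precisely because $\al$ is a contact diffeomorphism onto a dense open subset of $\f$.
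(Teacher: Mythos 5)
Your proposal is correct and follows exactly the paper's route: cite Heinonen--Koskela for the $W^{1,p}_{\loc}$ regularity with $p>\nu$, note (via Pansu differentiability and bounded distortion) that the Pansu differential of a quasiconformal homeomorphism is an isomorphism a.e., then apply Theorem~\ref{thm_main}. The paper records only the Sobolev-regularity citation and treats the nondegeneracy as standard, so you have simply spelled out the same argument in more detail.
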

Since $(N,V_1)$ is contact diffeomorphic to $(\hat N,W\restr_{\hat N})$, the above results give a classification of Sobolev mappings $N\supset U\ra N$ with nondegenerate Pansu differential; this applies in particular quasiconformal homeomorphisms and quasiregular maps by \cite[Section 5]{kmx_approximation_low_p}.

We also obtain more refined rigidity for global quasiconformal homeomorphisms of $N$:
\begin{theorem}
\label{thm_global_qc_rigidity}
Any quasiconformal homeomorphism $N\ra N$ is affine, i.e. the composition of a graded automorphism and a left translation.
\end{theorem}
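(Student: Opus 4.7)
The plan is to reduce Theorem~\ref{thm_global_qc_rigidity} to Corollary~\ref{cor_main_cor} by transferring $f$ from $N$ to the open Bruhat cell $\hat N\subset\f$ via $\al$, applying the corollary there, and then analyzing which global diffeomorphisms of $\f$ preserve $\hat N$ and descend through $\al$ to affine maps of $N$. Given a quasiconformal homeomorphism $f:N\to N$, I set $\tilde f:=\al\circ f\circ\al^{-1}:\hat N\to\hat N$. Because $\al$ is a smooth contact diffeomorphism, it is locally bilipschitz for the relevant Carnot-Carath\'eodory metrics, so $\tilde f$ is a quasiconformal self-homeomorphism of $\hat N$ viewed as an open subset of $\f$. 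Corollary~\ref{cor_main_cor} then produces $\eps\in\{0,1\}$ and $g\in\gl(n,\R)$ with $\tilde f=(\psi^\eps\circ g)\restr_{\hat N}$; since $\tilde f$ is a bijection of $\hat N$, the diffeomorphism $F:=\psi^\eps\circ g$ of $\f$ must satisfy $F(\hat N)=\hat N$.

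I identify $\hat N$ with the open orbit $N\cdot F^{op}$, where $F^{op}$ denotes the flag opposite to the standard flag $F^{st}$, via $\al(n)=nF^{op}$. The formula $\psi(hF^{op})=h^{-T}F^{st}$ exhibits $\psi$ as a bijection between $\hat N$ and the opposite big cell $\hat N_{op}=N^-\cdot F^{st}$. Using that the complement $\f\setminus\hat N$ is the union of Schubert divisors $D_k=\{W_\bullet:W_k\cap F^{st}_{n-k}\neq 0\}$, whose combined geometry recovers the flag $F^{st}$, the condition $F(\hat N)=\hat N$ splits into two cases: either (i) $\eps=0$ and $g\in B^+$ (upper triangular matrices, the stabilizer of $F^{st}$); or (ii) $\eps=1$ and $g\in JB^+$, where $J$ is the anti-diagonal permutation matrix sending $F^{st}$ to $F^{op}$.

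In case (i), writing $g=tu$ with $t$ in the diagonal torus $T$ and $u\in N$, and using that $T$ fixes $F^{op}$, a direct calculation gives $F(nF^{op})=tunF^{op}=\phi_t(u)\phi_t(n)F^{op}$, where $\phi_t$ denotes conjugation by $t$. Hence $f=L_{\phi_t(u)}\circ\phi_t$, which is affine since $\phi_t$ is a graded automorphism of $N$. In case (ii), writing $g=Jb$ with $b\in B^+$, transpose-inverse manipulations yield $F(nF^{op})=Jb^{-T}n^{-T}F^{st}=\Psi\cdot\Phi(n)\cdot F^{op}$, where $\Psi:=Jb^{-T}J^{-1}\in B^+$ and $\Phi(n):=Jn^{-T}J^{-1}\in N$. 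Decomposing $\Psi=t_\Psi u_\Psi$ with $t_\Psi\in T$ and $u_\Psi\in N$, and again using $t_\Psi F^{op}=F^{op}$, one obtains $f=L_{u_*}\circ\phi_{t_\Psi}\circ\Phi$ with $u_*:=\phi_{t_\Psi}(u_\Psi)\in N$, which is again affine.

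The main point to verify is that $\Phi(n)=Jn^{-T}J^{-1}$ is a graded Lie group automorphism of $N$. The homomorphism identity $\Phi(n_1n_2)=\Phi(n_1)\Phi(n_2)$ follows from $J^T=J^{-1}=J$ and $(n_1n_2)^{-T}=n_1^{-T}n_2^{-T}$. The corresponding Lie algebra map $X\mapsto -JX^TJ^{-1}$ preserves the grading, since transposition sends the stratum $V_i$ (the $i$th super-diagonal) to the $i$th sub-diagonal, and the formula $JE_{jk}J^{-1}=E_{n+1-j,n+1-k}$ shows that conjugation by $J$ carries the sub-diagonal back to $V_i$. Hence $\Phi$ preserves the grading, identifying it with the automorphism induced by the Cartan involution conjugated by $J$, which concludes the argument.
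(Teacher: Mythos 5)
Your proof is correct, and it takes a genuinely different route than the paper.

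Both arguments begin the same way: by Theorem~\ref{thm_main} (or Corollary~\ref{cor_main_cor}), under the identification $\al:N\to\hat N$, the map $\tilde f$ extends to some $F=\psi^\eps\circ g$ on all of $\f$, and since $f$ is a global self-bijection of $N$, one has $F(\hat N)=\hat N$. From this point the strategies diverge. The paper first normalizes $f(0)=0$ and postcomposes with $\tau$ to force $\eps=0$; then $g$ fixes the basepoint $F^{op}$, so $g\in P^-$, and after dividing off the diagonal one arrives at a lower unipotent $g$ with $g(\hat N)=\hat N$. The paper then rules out $g\neq I_n$ by hand, constructing an explicit flag $F\in\hat N$ (built from the first nonzero sub-diagonal column of $g$) whose image under $g$ lands on a Schubert divisor. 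Your argument skips the normalization and instead computes the full stabilizer of $\hat N$ in $\gl(n,\R)\cdot\{\id,\psi\}$ once and for all: it is $B^+$ in the $\eps=0$ sector and $\psi\cdot JB^+$ in the $\eps=1$ sector, and the affine structure on $N$ is then read off from the decomposition $B^+=TN$ by conjugating the torus factor through to fix $F^{op}$. Your approach is more structural and avoids the ad hoc flag construction, at the cost of identifying the stabilizer; the paper's is more elementary in that it only needs to rule out a lower unipotent $g\neq I_n$, which requires exhibiting just one bad flag.

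One place where your write-up is looser than it should be: the step ``$F(\hat N)=\hat N$ splits into (i) $\eps=0$, $g\in B^+$ or (ii) $\eps=1$, $g\in JB^+$'' is asserted via the remark that the Schubert divisors $D_k$ ``recover'' $F^{st}$. This is true but deserves a sentence more. The cleanest justification is via the $G$-equivariant projections $\pi_j:\f\to G(j,n)$: from $g\cdot\hat N=\hat N$ one gets $g\cdot\pi_j(\hat N)=\pi_j(\hat N)$, i.e.\ $g$ preserves the Schubert hypersurface $\Si_j=\{W_j:W_j\cap W^+_{n-j}\neq 0\}$ in $G(j,n)$ for each $j$; since an $(n-j)$-plane $U$ is uniquely determined by the set $\{W_j:W_j\cap U\neq 0\}$, this forces $g W^+_{n-j}=W^+_{n-j}$ for all $j$, hence $g\in P^+$. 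The $\eps=1$ case is identical after observing $\psi(\hat N)$ is the big cell opposite $F^{op}$, so $g(F^{st})=F^{op}$ and $g\in J P^+$. With that gap filled the proof is complete, and your identification of $\Phi(n)=Jn^{-T}J^{-1}$ as the paper's graded automorphism $\tau$ closes the argument in case (ii).
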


We mention that there was earlier work related to the results above, under additional regularity assumptions.   Corollary~\ref{cor_main_cor} was previously known for real analytic diffeomorphisms \cite{tanaka_differential_systems_graded_lie_algebras,yamaguchi_differential_systems}, $C^2$ diffeomorphisms \cite{cowling_et_al_iwasawa_n_groups,contact_conformal_maps_parabolic_geometry}, and Euclidean bilipschitz homeomorphisms \cite{lelmi}.  After the first version of this paper was posted on the arxiv, we learned that Theorem \ref{thm_main} was shown in \cite{cowling_calclulus_nilpotent_groups} for mappings whose Pansu differential satisfies a nonoscillation condition (see below for more discussion).

\bigskip
\subsection*{Background and historical notes.} We give only the briefest indication of some of the context, and the references provided are only a sampling of the literature.  See \cite{KMX1,KMX2}  for more extensive discussion and references.

For a diffeomorphism $\f\supset U\ra U'\subset\f$, the quasiconformality condition reduces (locally) to the condition of being contact, i.e. preserving the subbundle $W\subset T\f$.  The study of contact diffeomorphisms has a long history in differential geometry and exterior differential systems going back to Cartan \cite{cartan_1904,singer_sternberg_infinite_groups_lie_cartan,tanaka_differential_systems_graded_lie_algebras,yamaguchi_differential_systems,contact_conformal_maps_parabolic_geometry,ottazzi_warhurst}.
The particular case considered in this paper -- the flag manifold $\f$ equipped with the subbundle $W$ --   was treated in the papers \cite{cowling_et_al_iwasawa_n_groups,contact_conformal_maps_parabolic_geometry} 
along with other parabolic geometries.   

Quasiconformal homeomorphisms in $\R^n$, $n\geq 3$, have been heavily studied since the 1960s.  In  the Carnot group setting they first appeared in Mostow's rigidity theorem \cite{mostow_strong_rigidity}, and have been investigated intensely since roughly 1990, due to influential contributions of Gromov, Pansu, and applications to geometric group theory.  For almost 30 years, these historical threads have merged with other developments in analysis on metric spaces \cite{heinonen_koskela,cheeger_differentiability} and PDE  \cite{capogna,capogna_cowling,iwaniec_martin_quasiregular_even_dimensions}.  The rigidity (or regularity) problem for quasiconformal mappings of Carnot groups crystallized over time, first appearing implicitly in the seminal paper \cite{pansu}, 
then in \cite[Remark 6.12]{heinonen_calculus_carnot_groups}, \cite{koranyi_math_review}
and finally taking shape with Ottazzi-Warhurst's notion of a rigid Carnot group  \cite[p.2]{ottazzi_warhurst}, \cite{ottazzi_warhurst_algebraic_prolongation}, explicitly upgraded to a conjecture in \cite{KMX1}.   Corollary~\ref{cor_main_cor} settles the Regularity Conjecture for the Carnot group $N$.     Rigidity of quasiconformal mappings is an instance of rigidity for (solutions to) weak differential inclusions, and more generally partial differential relations, about which there is a vast literature, see for instance \cite{reshetnyak_space_mappings_bounded_distortion,iwaniec_martin_quasiregular_even_dimensions,vodopyanov_foundations,nash54, tartar79, murat81, gromov_pdr, scheffer93, dacorogna_marcellini99, muller99, muller_sverak03, delellis_szekelyhidi09, delellis_szekelyhidi16, isett18, buckmaster_vicol19}.  

\bigskip\bigskip
\subsection*{Discussion of the proof}~ 
For every $1\leq j\leq n-1$, let $\pi_j:\f\ra G(j,n)$ be the $\gl(n,\R)$-equivariant fibration given by $\pi_j(W_1,\ldots,W_{n-1})=W_j$; here $G(j,n)$ is the Grassmannian of $j$-planes in $\R^n$.

The first step of the proof of Theorem~\ref{thm_main}, which is implemented in Sections~\ref{sec_preliminaries} and \ref{sec_sobolev_mappings_preservation_foliations}, is to show that the mapping $f$ preserves the fibers of the fibration $\pi_j$ for all $1\leq j\leq n-1$ (possibly after first post-composing $f$ with the orthogonal complement mapping $\psi:\f\ra \f$).  The heart of the argument is to exclude a certain type of oscillatory behavior in the Pansu differential, and this is achieved by using the Pullback Theorem from \cite{KMX1}.  

The second step of the proof (Section~\ref{sec_rigidity_foliation_preserving_maps}) is to show that a continuous map $U\ra\f$ which preserves the fibrations $\pi_j$, and satisfies a certain nondegeneracy condition, must agree with the diffeomorphism $\f\ra \f$ induced by some element $g\in \gl(n,\R)$.  Our approach to this was inspired by incidence geometry -- the Fundamental Theorem of Projective Geometry and its generalization by Tits.
See Section~\ref{subsec_fibration_preserving_n_3_case} for a sketch of this argument in the $n=3$ case.   

We now compare our argument with previous work.

The paper \cite{cowling_calclulus_nilpotent_groups} has a version of Theorem~\ref{thm_main} with the additional assumption that oscillatory behavior of the Pansu differential -- the key point addressed by the first step in our proof -- does not occur.  The papers \cite{cowling_calclulus_nilpotent_groups,cowling_cap_et_al_heisenberg_sl3r_rigidity,mccallum_local_to_global} all contain rigidity theorems which, like the second step in our proof, were inspired by classical incidence geometry; however, none of these results covers the cases needed for Theorem~\ref{thm_main}.

The approaches to Corollary~\ref{cor_main_cor} taken in \cite{tanaka_differential_systems_graded_lie_algebras,yamaguchi_differential_systems,cowling_et_al_iwasawa_n_groups,contact_conformal_maps_parabolic_geometry,lelmi} are all based on classifying the contact vector fields, and then using an ``integration'' argument to obtain rigidity for contact mappings \cite[Theorem 3.1]{kobayashi_transformation_groups}, \cite{palais_global_formulation}.  To carry out the latter step one has to know that the pushforward of a smooth contact vector field under a contact mapping is still a contact vector field (although possibly of low regularity, a priori).  This pushforward assertion is obvious for $C^2$ diffeomorphisms and was shown to hold for Euclidean bilipschitz homeomorphisms in \cite{lelmi}; this appears to be the minimal regularity needed to implement such an argument.

\bigskip
\subsection*{Organization of the paper}~ The proof of Theorem~\ref{thm_main} is carried out in Sections~\ref{sec_sobolev_mappings_preservation_foliations}-\ref{sec_proof_thm_main}.  The statement and proofs of the analogous results in the complex and quaternionic cases are in Section~\ref{sec_complex_quaternionic}.  Rigidity for global quasiconformal homeomorphisms (Theorem~\ref{thm_global_qc_rigidity}) is proven in Section~\ref{sec_global_qc_homeos}. 

\bigskip
\subsection*{Acknowledgements}  The authors would like to thank Michael Cowling for pointing out reference \cite{cowling_calclulus_nilpotent_groups}.

\section{Preliminaries}
\label{sec_preliminaries}

\subsection{The Iwasawa $N$ group, and the manifold of complete flags}~
Most of this subsection is standard material from Lie theory.  We have tried to give a more self-contained treatment accessible to readers less familiar with Lie theory; see for instance \cite{contact_conformal_maps_parabolic_geometry} for another approach.

\subsection*{Notation} Let $n\geq 2$ be an integer.  We will use the following notation: 
\bit
\item $P^+,P^-\subset \gl(n,\R)$ will denote the subgroups of  upper and lower triangular matrices, respectively.
\item $N\subset P^+$ denotes the subgroup of upper triangular matrices with $1$s on the diagonal.  
\item  $G:=\pgl(n,\R)=\gl(n,\R)/\{\la \id\mid\la\neq 0\}$ is projective linear group.  
\item We let $P:=P^-/\{\la\id\mid \la\neq 0\}\subset \gl(n,\R)/\{\la\id\mid \la \neq 0\}=G$ be the image of $P^-\subset \gl(n,\R)$ under quotient map $\gl(n,\R)\ra \pgl(n,\R)$.  
\item Since  $N\cap P^-=\{e\}$ the quotient map $\gl(n,\R)\ra G$ restricts to an embedding $N\hookrightarrow G$, and we identify $N$ with its image in $G$.
\item $\fn$, $\fp^\pm$, $\fp$, $\gll(n,\R)$, and $\fg$ denote the Lie algebras of $N$, $P^\pm$, $P$, $\gl(n,\R)$, and $G$, respectively.
\item $X_{ij}\in \gll(n,\R)$ is the matrix with a $1$ in the $ij$-entry and $0$s elsewhere, so 
\begin{equation}
\label{eqn_bracket_relations}
\begin{aligned} 
X_{i_1,j_1}X_{i_2,j_2}&=\de_{i_2,j_1}X_{i_1,j_2}\\
[X_{i_1,j_1},X_{i_2,j_2}]&=\de_{i_2,j_1}X_{i_1,j_2}-\de_{i_1,j_2}X_{i_2,j_1}\,.
\end{aligned}
\end{equation}
\eit

The Carnot group structure on the upper triangular matrices $N$ is defined by the grading $\fn=\oplus_iV_i$ where 
$$
V_i=\{A\in \gll(n,\R)\mid A_{jk}=0\;\text{if}\;k-j\neq i\}\,.
$$
For $r\neq 0$ the Carnot dilation $\de_r:N\ra N$ is given by conjugation with diagonal matrix $\diag(r^{-1},\ldots,r^{-n})$. 

\bigskip
\subsection*{The flag manifold}  The {\bf flag manifold $\f$} is the set of (complete) flags in $\R^n$, i.e. the collection of nested families of linear subspaces of $\R^n$
$$
W_1\subset\ldots \subset W_{n-1} 
$$
where $W_j$ has dimension $j$; this inherits a smooth structure as a smooth submanifold of the product of Grassmannians $G(1,n)\times\ldots\times G(n-1,n)$.

\begin{lemma}[Properties of $\f$]
\label{lem_properties_of_f}

\mbox{}
\ben
\item\label{item_transitive_actions} The action of $\gl(n,\R)\acts G(j,n)$ induces transitive actions 
$$
\gl(n,\R)\acts \f\quad\text{and}\quad G\acts \f\,.
$$
\item Letting $W_j^+:=\Span(e_1,\ldots,e_j)$, $W_j^-:=\Span(e_n,\ldots,e_{n-j+1})$, the stabilizer of the flag $(W_j^\pm)_{1\leq j\leq n-1}\in \f$ in $\gl(n,\R)$ is $P^\pm$.
\item The stabilizer of $(W_j^-)$ in   $G$ is $P$.    
 Henceforth we identify $\f$ with the homogeneous space (coset space) $G/P$ using the orbit map $g\mapsto g\cdot (W_j^-)$. 
\item The differential of the fibration $G\ra G/P\simeq \f$ at $e\in G$ yields an isomorphism 
$\fg/\fp\simeq T_P(G/P)\simeq T_{(W_j^-)}\f$.  
\item The fibration map $G\ra G/P$ is $P$-equivariant w.r.t. the action of  $P$ on  $G$ by conjugacy and on $G/P$ by left translation.
\item The differential at $e\in G$ induces an isomorphism of $P\stackrel{\Ad\restr_P}{\acts}\fg/\fp$ to the (isotropy) representation $P\acts T_P(G/P)$; here $G\stackrel{\Ad}{\acts} \fg$ is the Adjoint representation and $\Ad\restr_P$ is the restriction to $P$.
\item \label{item_fn_fg_fp_isomorphism} The composition $\fn\hookrightarrow \fg\ra \fg/\fp$ is an isomorphism.
\item \label{item_p_invariance_v_1} The image $\hat V_1$ of $V_1\subset\fn$ under $\fn\ra \fg/\fp\simeq T_P(G/P)$ is a $P$-invariant subspace of $T_P(G/P)$.  This  defines a $G$-invariant subbundle $\h\subset T(G/P)\simeq T\f$. 
\item \label{item_orbit_map_n} The orbit map $\al:N\ra \f$ given by  $\al(g)= g\cdot (W_j^-)$ 
is an $N$-equivariant embedding of $N$ onto an open subset of $\f$, which we denote by $\hat N\subset\f$.  The image $\hat N$ may be characterized as the collection of flags $(W_j)$ such that $W_j\cap W_{n-j}^+=\{0\}$ for every $1\leq j\leq n-1$. 
\item The embedding $\al$ is contact with respect to the subbundles $V_1\subset TN$ and $\h\subset T(G/P)$.
\item \label{item_delta_hat_delta} For every $r\in (0,\infty)$ we have $\al\circ\de_r=\hat \de_r\circ\al$, where $\hat\de_r:\f\ra\f$ is given by $\hat\de_r(x)=g_r\cdot x$ with $g_r=\diag(r^{-1},\ldots,r^{-n})$.
\een
\end{lemma}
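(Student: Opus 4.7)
My plan is to dispatch items (1)--(11) in order, leaning on the vector space decomposition $\gll(n,\R) = \fn \oplus \fp^-$ and standard facts about homogeneous spaces.

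First I would handle (1)--(3): transitivity of $\gl(n,\R)\acts \f$ reduces to the fact that any two ordered bases of $\R^n$ are related by an element of $\gl(n,\R)$, which therefore carries any prescribed flag to any other; the stabilizer of the standard flag $(W_j^+)$ (resp.\ $(W_j^-)$) consists of matrices sending each $W_j^+$ to itself, namely $P^+$ (resp.\ $P^-$). Projecting to $G=\pgl(n,\R)$ and invoking orbit-stabilizer yields the $G$-equivariant identification $\f\simeq G/P$. Items (4)--(6) are then standard Lie-theoretic facts about principal bundles $G\to G/P$ for $P\subset G$ closed: the differential at $e$ produces the isomorphism $\fg/\fp\simeq T_{eP}(G/P)$, the fibration is $P$-equivariant with respect to conjugation on the total space and left translation on the base, and differentiating this equivariance at $e$ identifies the isotropy representation with $\Ad|_P$ on $\fg/\fp$.

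The algebraic heart is item (7): since $\gll(n,\R)=\fn\oplus \fp^-$ as vector spaces (strictly upper triangular plus lower triangular including the diagonal), and $\R\,\id\subset\fp^-$, the composition $\fn\hookrightarrow \gll(n,\R)\twoheadrightarrow \gll(n,\R)/\fp^-\simeq \fg/\fp$ is an isomorphism. For (8), I would note that the bracket relations \eqref{eqn_bracket_relations} imply that $\Ad(P^-)$ preserves the ascending filtration
$$
\fp^-\ \subsetneq\ V_1+\fp^-\ \subsetneq\ (V_1\oplus V_2)+\fp^-\ \subsetneq\ \ldots\ \subsetneq\ \gll(n,\R);
$$
in particular the image $\hat V_1$ of $V_1$ in $\fg/\fp$ is $P$-invariant, and translating via $G$ produces the claimed $G$-invariant subbundle $\h\subset T\f$.

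For (9)--(11), injectivity of the orbit map $\al:N\to \f$ follows from $N\cap P^-=\{e\}$, and its differential at $e$ is precisely the isomorphism of (7); $N$-equivariance then upgrades $\al$ to an open embedding, and the characterization of the image $\hat N$ as the big Bruhat cell $\{(W_j)\mid W_j\cap W_{n-j}^+=\{0\}\}$ is established by the standard row/column reduction by elements of $N$. The contact property (10) follows because both $V_1$ and $\h$ are invariant under their respective group actions (by definition of the Carnot grading on $N$, and by $G$-invariance of $\h$), so matching them at the identity via (8) suffices to match them everywhere. Item (11) reduces to the observation that the Carnot dilation acts on $N$ as $\de_r(n)=g_rng_r^{-1}$ and that $g_r\in P^-$ stabilizes $(W_j^-)$, so $\al(\de_r(n))=g_rng_r^{-1}\cdot(W_j^-)=g_r\cdot\al(n)=\hat\de_r(\al(n))$. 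The main obstacle I anticipate is less conceptual than bookkeeping: keeping straight the interplay between $\gl(n,\R)$ and $G$, and between $\fp^\pm$ and $\fp$; once the identification $\fg/\fp\simeq\fn$ from (7) is in hand, every remaining assertion is a routine verification.
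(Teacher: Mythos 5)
Your proposal is correct and matches the paper's proof in all essentials: (1)--(7) as routine linear algebra and homogeneous-space facts, (8) via the bracket relations $[\fp^-,V_1]\subset V_1+\fp^-$, and (9)--(11) via $N\cap P^-=\{e\}$, the isomorphism from (7), row reduction for the Bruhat cell, and the identity $\de_r=I_{g_r}$ with $g_r\in P^-$. The only nitpick is that in (9) you should also note the easy converse inclusion (that the $N$-orbit of the base flag is contained in $\{(W_j)\mid W_j\cap W_{n-j}^+=\{0\}\}$, which follows since $N$ fixes each $W_k^+$); your row-reduction remark only covers the reverse containment.
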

\begin{proof}
(\ref{item_transitive_actions})-(\ref{item_fn_fg_fp_isomorphism}) follow readily from linear algebra or basic theory of manifolds. 

(\ref{item_p_invariance_v_1}).  To see this, it suffices to show that the image of $V_1\subset \fn$ under $\fn\hookrightarrow \gll(n,\R)/\fp^-$ is invariant under $\Ad\restr_{P^-}$.  Since $P^-$ is connected, it therefore suffices to see that $[\fp^-,V_1]\subset V_1+\fp^-$;    this follows readily by applying \eqref{eqn_bracket_relations}. 

(\ref{item_orbit_map_n}).  The orbit map $\al:N\ra \f$ is smooth and $N$-equivariant.  Since the differential of $\al$ at $e\in N$ is an isomorphism $D\al(e):\fn\ra \fg/\fp$,  the map $\al$  is an $N$-equivariant immersion onto an open subset.  But $N\cap P^-=\{e\}$ so it is also injective, and hence $\al$ is an embedding.  

Let $Z:=\{(W_j)\in \f\mid W_j\cap W_{n-j}^+=\{0\}\;\text{for all}\;1\leq j\leq n-1\}$.    If $(W_j)$ belongs to the orbit $\hat N=N\cdot (W_j^-)$ then $(W_j)\in Z$ since $W_j^-\cap W_{n-j}^+=\{0\}$ and $N$ fixes $W_{n-j}^+$ for all $j$. Now suppose $(W_j)\in Z$.  We prove by induction on $k$ that the $N$-orbit of $(W_j)$ contains a flag $(W_j^k)$ such that $W_j^k=W_j^-$ for all $j\leq k$.  Since $W_1\cap W_{n-1}^+=\{0\}$ we may choose $v=(v_1,\ldots,v_n)\in W_1$ with $v_n=1$.  If $n\in N$ is the block matrix
\begin{equation*}
\left[
\begin{matrix}
I& -b\\0&1
\end{matrix}
\right]
\end{equation*}
with $b=[v_1,\ldots,v_{n-1}]^t$ we have $n\cdot v\in W_1^-$, so letting $(W_j^1):=n\cdot (W_j)$ we have $W_1^1=W_1^-$, establishing the $k=1$ case.   The inductive step follows similarly, by working in the quotient $\R^n/W_{k-1}^-$.

(11).  Both $V_1$ and $\h$ are $N$-invariant subbundles, and $\al$ is $N$-equivariant, so the contact property follows from the fact that $\h(P)\subset T_P(G/P)$ is the image of $V_1\subset \fn$ under $D\al(e)$.

(\ref{item_delta_hat_delta}). Since $g_r\in P^-$ we have
$$
\al(\de_r(g))=\de_r(g)\cdot P=g_rgg_r^{-1}\cdot P=g_rg\cdot P=\hat\de_r(\al(g))\,.
$$

\end{proof}

\bigskip
\subsection*{Automorphisms} We collect some properties of automorphisms of $G$ and graded automorphisms of $N$.

\begin{lemma}[Properties of $\aut(G)$]
\label{lem_properties_aut_g}

\mbox{}
\ben
\item Transpose inverse $T\mapsto (T^t)^{-1}$ descends to a Lie group automorphism of $G$.   The group $\aut(G)$ of Lie group automorphisms of $G$ is generated by inner automorphisms and transpose inverse.
\item Let $\tau:\gl(n,\R)\ra \gl(n,\R)$ be the automorphism given by 
$$
\tau(T)=\Pi (T^t)^{-1}\Pi^{-1}
$$ 
where $\Pi\in\gl(n,\R)$ is the permutation matrix with $\Pi e_i=e_{n-i+1}$.  Then the induced automorphism  $\gll(n,\R)\ra\gll(n,\R)$ -- which we also denote by $\tau$ by abuse of notation -- is given by 
\begin{equation}
\label{eqn_tau_on_fg}
(\tau(A))_{i,j}=-A_{n-j+1,n-i+1}\,.
\end{equation} 
Then $\tau$ preserves $N$, and induces a graded automorphism $\fn\ra\fn$.
\item The automorphism $\tau:\gl(n,\R)\ra\gl(n,\R)$ descends to an automorphism $G\ra G$, which we also denote by $\tau$ (by further abuse of notation).    
\een
\end{lemma}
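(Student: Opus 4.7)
The plan is to verify each of the three items in turn. Items (2) and (3) reduce to direct matrix calculations, while item (1) requires one non-elementary input from the classification of automorphisms of simple Lie algebras.

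For item (1), the first claim is routine: transpose and inversion are each group anti-automorphisms of $\gl(n,\R)$, so their composition $T\mapsto (T^t)^{-1}$ is a group automorphism, and since $(\la\id)^t=\la\id$ we have $((\la\id)^t)^{-1}=\la^{-1}\id$, so scalars are preserved and the map descends to $G=\pgl(n,\R)$. For the generation claim, I would argue that since $G$ is a semisimple Lie group with trivial center, the natural map $\aut(G)\to\aut(\fg)=\aut(\sll(n,\R))$ is an isomorphism (injectivity because the connected component of $G$ equals its identity component and has trivial center, surjectivity by integrating Lie algebra automorphisms). Then I would invoke the classical description of $\aut(\sll(n,\R))$ for $n\geq 3$: the outer automorphism group is $\Z/2$, corresponding to the unique nontrivial symmetry of the Dynkin diagram $A_{n-1}$, with representative the involution $A\mapsto -A^t$. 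Since inner automorphisms of $G$ surject onto inner automorphisms of $\fg$, and transpose inverse integrates $A\mapsto -A^t$, the generation statement follows.

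For item (2), I would differentiate $T\mapsto (T^t)^{-1}$ at the identity to see that it corresponds at the Lie algebra level to $A\mapsto -A^t$; hence $\tau$ induces $A\mapsto -\Pi A^t\Pi^{-1}$ on $\gll(n,\R)$.  Noting $\Pi=\Pi^{-1}$ and $\Pi_{i,k}=\delta_{i,n-k+1}$, a one-line index computation gives $(\Pi A^t\Pi)_{i,j}=A_{n-j+1,n-i+1}$, which yields formula \eqref{eqn_tau_on_fg}. From this formula, if $A\in V_k$, so that $A_{p,q}=0$ unless $q-p=k$, then $\tau(A)_{i,j}$ vanishes unless $(n-i+1)-(n-j+1)=j-i=k$; hence $\tau(V_k)\subseteq V_k$ and, as $\tau$ is an isomorphism, it is a graded automorphism of $\fn$. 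At the group level, if $T\in N$ then $T^t$ and therefore $(T^t)^{-1}$ are lower triangular with $1$s on the diagonal, and conjugating by $\Pi$ reverses the order of the basis vectors, turning such a matrix into an upper triangular matrix with $1$s on the diagonal, so $\tau(T)\in N$.

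For item (3), the descent is immediate from $\tau(\la\id)=\Pi(\la^{-1}\id)\Pi^{-1}=\la^{-1}\id$: the kernel of $\gl(n,\R)\to G$ is preserved, so $\tau$ descends to an automorphism of $G$. The only step requiring substantive input is the classification of outer automorphisms used in part (1); everything else is bookkeeping about triangular matrices and the permutation $\Pi$.
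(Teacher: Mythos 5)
Your items (2) and (3) are correct and match what the paper dismisses as ``straightforward.''  For item (1) you take a genuinely different route: the paper appeals to Dieudonn\'e's theorem on automorphisms of $\pgl(n,F)$ (Theorem~\ref{diedo}, cited from \cite{dieudonne}, with the $\hat h$ factor trivial since $\aut(\R)=\{\id\}$), and then separately rules out a remaining ambiguity ``by considering centralizers of elements $g\in G$ of order~$2$.''  You instead reduce to $\aut(\sll(n,\R))$.  That alternative is viable in principle, but as written it has a flaw that matters precisely in this paper's range: $\pgl(n,\R)$ is \emph{disconnected} when $n$ is even, and $n\ge 4$ includes all even $n$ from $4$ on.

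Concretely there are two issues.  First, the outer automorphism group of the real form $\sll(n,\R)$ is not $\Z/2$ when $n$ is even; it is $\Z/2\times\Z/2$.  Besides the Dynkin involution $A\mapsto -A^t$ there is the class of $\Ad(g)$ for $g\in\gl(n,\R)$ with $\det g<0$, which is not an inner automorphism of the Lie algebra because $\la^n=-1$ has no real solution for $n$ even.  That class \emph{is} realized by an inner automorphism of $G=\pgl(n,\R)$, so the conclusion you want survives, but your sentence ``inner automorphisms of $G$ surject onto inner automorphisms of $\fg$'' is not enough: you actually need the image of $G$-conjugations to be all of $\Ad(\gl(n,\R))$, which is strictly larger than $\operatorname{Inn}(\fg)=\Ad(\sl(n,\R))$ when $n$ is even, and the premise $\out(\sll(n,\R))=\Z/2$ is simply false there.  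Second, injectivity of $\aut(G)\to\aut(\fg)$ is not automatic for disconnected $G$: an automorphism $\Phi$ with $D\Phi=\id$ fixes $G^0$ pointwise but could a priori act nontrivially on the other component.  Your parenthetical ``(injectivity because the connected component of $G$ equals its identity component and has trivial center)'' does not supply an argument for this; the paper's centralizer remark is precisely that missing step.  To repair your version: note that $\Phi$ preserves $G^0$-cosets since $\aut(G/G^0)=\aut(\Z/2)$ is trivial, write $\Phi(g)=gh$ for $g\notin G^0$ with $h\in G^0$, and use $\Phi\restr_{G^0}=\id$ to conclude $h\in Z(G^0)=\{e\}$.
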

\begin{proof}~

(1). This follows from Theorem~\ref{diedo} since the only automorphism of $\R$ is the identity map.  
   Therefore there exists $\Phi_1\in \aut(G)$ in the group generated by inner automorphisms and transpose-inverse, such that $D\Phi_1=D\Phi$.  Hence $\Phi_2:=\Phi_1^{-1}\circ\Phi\in \aut(G)$ is the identity on the connected component of $e\in G$.  By considering centralizers of elements $g\in G$ of order $2$, it is not hard to see that $\Phi_2=\id$, and hence $\Phi=\Phi_1$.

(2) and (3) are straightforward.
\end{proof}

\bigskip\bigskip 
We now consider the group $\aut_{gr}(N)$ of graded automorphisms of $N$.

\bigskip

\begin{lemma}[Properties of $\aut_{gr}(N)$]
\label{lem_properites_aut_gr_n}
Assume $n\geq 4$.
\ben
\item \label{item_phi_inducing_graded_automorphism} If $\Phi\in \aut(G)$ and $\Phi(N)=N$, then $\Phi$ induces a graded automorphism of $N$ if and only if  $\Phi\restr_N=\tau^\eps\circ I_g\restr_N$    for some $\eps\in \{0,1\}$, $g=\diag(\la_1,\ldots,\la_n)\in G$.
\item Every graded automorphism of $N$ arises as in (1).
\item  For $1\leq j\leq n-1$ let $\fk_j\subset \fn$ be the Lie subalgebra generated by $\{X_{i,i+1}\}_{i\neq n-j}$, and $K_j\subset N$ be the Lie subgroup with Lie algebra $\fk_j$.
  A graded automorphism $N\ra N$ is induced by conjugation by some $g=\diag(\la_1,\ldots,\la_n)$ if and only if it preserves the subgroups $K_j$ for $1\leq j\leq n-1$.
\een
\end{lemma}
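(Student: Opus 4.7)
The "if" direction is immediate: conjugation by a diagonal $g = \diag(\la_1,\ldots,\la_n)$ scales $X_{ij}$ by $\la_i/\la_j$, preserving each $V_k$, and $\tau$ is graded by Lemma~\ref{lem_properties_aut_g}(2), so $\tau^{\eps}\circ I_g$ restricts to a graded automorphism of $N$. For "only if", write $\Phi = I_{g_0}\circ\tau^{\eps}$ via Lemma~\ref{lem_properties_aut_g}(1); since $\tau(N)=N$, the hypothesis $\Phi(N)=N$ forces $g_0$ to normalize $N$ in $G$. Using that $N$ has $(W_j^+)_{j=1}^{n-1}$ as its unique maximal $\fn$-invariant flag (any invariant subspace is forced to coincide with some $W_j^+$ by iterated lowering), a normalizing element must stabilize this flag, giving $N_G(N)=P^+/\{\la\id\}$. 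Decompose $g_0=D\cdot U$ with $D$ diagonal and $U\in N$, so $I_{g_0}=I_D\circ I_U$, where $I_D$ is already graded. The remaining graded condition $I_U(V_1)=V_1$ reduces, via $\Ad(U)=\exp(\ad Y)$ for $Y=\log U$ and $[V_k,V_1]\subset V_{k+1}$, to $[Y,V_1]=0$; since $V_1$ generates $\fn$, this puts $Y\in Z(\fn)$, so $I_U$ acts trivially on $N$ and $\Phi|_N=\tau^{\eps}\circ I_D|_N$.

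\textbf{Plan for part (2), the core assertion.} Given $\phi\in\aut_{gr}(N)$, write $\phi(X_{i,i+1})=\sum_j a_{ij}X_{j,j+1}$ so that $\phi$ is determined (since $V_1$ generates $\fn$) by the matrix $A=(a_{ij})$. I would derive constraints on $A$ in two stages. Stage 1: the relations $[X_{i,i+1},X_{j,j+1}]=0$ for $|i-j|\geq 2$ translate via \eqref{eqn_bracket_relations} into the vanishing of adjacent $2\times 2$ minors, $a_{im}a_{j,m+1}=a_{i,m+1}a_{jm}$, which alone is insufficient (for $n=4$ these force only $a_{12}=a_{32}=0$). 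Stage 2: extend $\phi|_{V_2}$ via $\phi(X_{i,i+2})=[\phi X_{i,i+1},\phi X_{i+1,i+2}]$ and impose consistency with the $V_1\times V_2\to V_3$ brackets, in particular the vanishing ones such as $[X_{23},X_{24}]=0$; this yields further quadratic relations that kill the residual "shear" freedom. For $n\geq 4$, the pairs of non-adjacent simple roots provide enough constraints to conclude that $A$ is monomial, with underlying permutation $\sigma$ preserving adjacency in the path $1\text{--}2\text{--}\cdots\text{--}(n-1)$, hence $\sigma\in\{\id,\,i\mapsto n-i\}$. In the identity case, solve $\la_i/\la_{i+1}=a_{ii}$ inductively for $D=\diag(\la_1,\ldots,\la_n)$, giving $\phi=I_D$; in the reversal case, $\tau^{-1}\circ\phi$ reduces to the identity case.

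\textbf{Plan for part (3) and the main obstacle.} The "if" direction is clear: $I_D$ scales each $X_{i,i+1}$ individually, hence preserves any subalgebra generated by a subset of them, in particular $\fk_j$, and so $K_j$. For "only if", use (2) to write $\phi=\tau^{\eps}\circ I_D$. Since $\tau$ sends $X_{i,i+1}$ to $-X_{n-i,n-i+1}$, it carries the generating set of $\fk_j$ (omitting $X_{n-j,n-j+1}$) onto that of $\fk_{n-j}$ (omitting $X_{j,j+1}$), so $\tau(K_j)=K_{n-j}$. If $\eps=1$, then $\phi(K_j)=K_{n-j}\neq K_j$ whenever $j\neq n-j$, contradicting the hypothesis (such $j$ exist for $n\geq 4$); hence $\eps=0$. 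The main obstacle is stage 2 of part (2): the stage-1 ($V_1\times V_1$) constraints genuinely underdetermine $A$, and one must combine them carefully with $V_1\times V_2$ compatibility to force monomiality. For large $n$ the bookkeeping grows, and an induction on $n$ (e.g.\ by restricting $\phi$ to subalgebras obtained by deleting an end generator, reducing to a smaller type-$A$ case) would likely keep the argument compact.
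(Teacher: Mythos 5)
Your plans for parts (1) and (3) are essentially sound and close to what the paper does: for (1) the paper computes $g_r g g_r^{-1}g^{-1}\in\cent(N)$ directly rather than passing through $\Ad(U)=\exp(\ad Y)$, but the content (a unipotent normalizing element acting gradedly must be central) is the same; for (3) the argument via $\tau(K_j)=K_{n-j}$ is exactly the paper's.

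For part (2), which you correctly flag as the heart of the matter, your route is genuinely different from the paper's and has a gap. The paper's key device is the family of $\phi$-invariant integers $\rank(\ad x\restr_{V_j})$ for $x\in V_1$: from $\{x\in V_1:\rank(\ad x\restr_{V_1})\le 1\}$ and $\{x\in V_1:\rank(\ad x\restr_{V_2})=0\}$ one reads off, before writing a single matrix equation, that $\phi$ preserves $\Span(X_{12},X_{34})$ and $\Span(X_{23})$ when $n=4$, and for $n\geq 5$ that $\phi$ permutes $\{\R X_{1,2},\R X_{n-1,n}\}$ and preserves the subalgebra $\fh_0$ generated by $X_{2,3},\dots,X_{n-2,n-1}$. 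That last fact is precisely what makes the induction on $n$ go: $\fh_0\cong\fn_{n-2}$, so the induction hypothesis applies to $\phi\restr_{\fh_0}$. Your Stage~1/Stage~2 bracket computation does reproduce this for $n=4$ (one can check that the constraints $[X_{12},X_{34}]=0$, $[X_{23},X_{13}]=0$, $[X_{23},X_{24}]=0$, $[X_{12},X_{13}]=0$, $[X_{34},X_{24}]=0$ together with nondegeneracy force $A$ diagonal or anti-diagonal), but for $n\geq 5$ your proposed induction ``by restricting $\phi$ to subalgebras obtained by deleting an end generator'' needs, as a prerequisite, the fact that $\phi$ carries that subalgebra to itself (or to its $\tau$-image). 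You have not supplied any argument for this, and it does not fall out of the Stage~1 minor identities together with nondegeneracy alone; the paper obtains it from the rank invariant, and your framework currently has no substitute. So as written the plan is incomplete for $n\geq 5$: either you need to supply the invariance of a suitable sub-$\fn$ by some bracket-theoretic characterization (essentially rederiving a rank-type statement), or carry out the Stage~1/Stage~2 elimination globally without induction, which you yourself acknowledge is unresolved bookkeeping. I would also note that the paper's $n=4$ argument is not purely rank-based---after the rank step it uses exactly the $V_1\times V_2$ relations $[\phi X_1,\phi Y_1]=0$ and $[\phi X_2,\phi Y_2]=0$, i.e.\ your Stage~2---so the real difference is that the paper front-loads a rank argument that cleanly kills the off-diagonal entries in the ``wrong'' positions and sets up the induction, whereas you try to extract all of this from bracket identities; that is a heavier lift and the inductive step is the missing piece.
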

\begin{proof} (\ref{item_phi_inducing_graded_automorphism}).  Suppose $\Phi=\tau^\eps\circ I_g$ for some $g=\diag(\la_1,\ldots,\la_n)$.  Note that $I_g$ induces a graded automorphism of $N$ since
$$
I_g\circ \de_r=I_g\circ I_{g_r}=I_{gg_r}=I_{g_rg}=I_{g_r}\circ I_g=\de_r\circ I_g\,.
$$
Combining with Lemma~\ref{lem_properties_aut_g}(2) we get that $\Phi$ induces a graded automorphism of $N$, proving the ``if'' direction.

Now suppose $\Phi\in \aut(G)$ preserves $N$, and induces a graded automorphism $N\ra N$.

By  Lemma~\ref{lem_properties_aut_g}(1)
we have $\Phi=\tau^{\eps}\circ I_g$ for some $\eps\in\{0,1\}$, $g\in G$.  By Lemma~\ref{lem_properties_aut_g}(2), after postcomposing with $\tau$ if necessary, we may assume without loss of generality that $\Phi=I_g$ is an inner automorphism.  The condition $\Phi(N)=N$ is equivalent to saying that $g$ belongs to the normalizer of $N$ in $G$.     This is just the image of $P^+$ under the quotient map $\gl(n,\R)\ra G$.      (To see this, note that if $\hat g\in \gl(n,\R)$ is a lift of $g$, then $\hat g$ normalizes $N\subset\gl(n,\R)$.  But $\Span(e_1)$ is the unique fixed point of the action $N\acts G(1,n)$, so $\hat g$ fixes $\Span(e_1)$; passing to the quotient $\R^n/\Span(e_1)$ and arguing by induction, one gets that $\hat g\in P^+$. )
  Furthermore, after multiplying $g$ by a diagonal matrix, we may assume without loss of generality that $g\in N$.  

Now $I_g:N\ra N$ is a graded automorphism, so for all $r>0$ we have $I_g\circ \de_r=\de_r\circ I_g$ , i.e. 
\begin{align*}
&I_g\circ I_{g_r}=I_{g_r}\circ I_g\\
\implies &I_{g_rgg_r^{-1}g^{-1}}=\id_N\\
\implies &g_rgg_r^{-1}g^{-1}\in \cent(N)\\
\implies &g_rXg_r^{-1}-X\in \cent(\fn)
\end{align*} 
where $g=\exp X$ for $X\in \fn$; here we are using the fact that the exponential $\exp:\fn\ra N$ is a diffeomorphism.   Calculating in $\gll(n,\R)$ and letting $X=\sum_{j>i}a_{ij}X_{ij}$ we have
$$
g_rXg_r^{-1}-X=\sum_{j>i}a_{ij}(r^{j-i}-1)X_{ij}\in \cent(\fn)\,.
$$
Therefore $X\in \cent(\fn)$, and $g=\exp X\in \cent(N)$, and $I_g=\id$.  

(2).   Let $\phi:\fn\ra \fn$ be a graded automorphism of $\fn$.  We will use the fact that for any $x\in V_1$ and any $j\ge 1$,  we have $\phi\circ (\ad x\restr_{V_j})=(\ad\, \phi x\restr_{V_j})\circ \phi$, and in particular 
 $\rank(\ad\, x|_{V_j})=
 \rank(\ad\, \phi x|_{V_j})$.

First suppose $n=4$.

Set 
\begin{equation}
\begin{aligned}
\label{eqn_x_y_z_defs}
X_0=X _{23}\,,\;X_1=X _{12}\,,\;X_2=X _{34}\,,\\
Y_1=X _{13}\,,\;Y_2=-X _{24}\,,\;Z=X _{14}
\end{aligned}
\end{equation}   
Then  the following are the only nontrivial bracket relations between the basis elements:    
\begin{equation}
\label{eqn_x_y_z_brackets}
\begin{aligned}~
[X_0,X_1]=-Y_1, \;\; [X_0, X_2]=-Y_2,\\
[X_1, Y_2]=-Z=[X_2, Y_1].
\end{aligned}
\end{equation}
 Let $W_0:=\Span(X_0)$, and $W:=\Span(X_1, X_2)$.  If $x\in V_1$ then $\rank(\ad x\restr_{V_1})\leq 1$ iff $x\in W$, and $\rank(\ad x\restr_{V_2})=0$ iff $x\in W_0$.  It follows that $\phi (W)=W$ and $\phi (W_0)=W_0$, and so   
$$\phi X_0=a_0X_0\,,\quad \phi X_1=a_{11}X_1+a_{21}X_2\,,\quad \phi X_2=a_{12}X_1+a_{22}X_2$$
for some $0\not=a_0\in \R$,  $(a_{ij})\in \gl(2,\R)$.  Hence
\begin{align}
\notag \phi Y_1&=[\phi X_1, \phi X_0]=a_0a_{11}Y_1+a_0a_{21}Y_2\\
\notag \phi Y_2&=[\phi X_2, \phi X_0]=a_0a_{12}Y_1+a_0a_{22}Y_2\\
\label{eqn_ax1_ay1} 0&=[\phi X_1,\phi Y_1]=-2a_0a_{11}a_{21}Z\\
\label{eqn_ax2_ay2}0&=[\phi X_2,\phi Y_2]=-2a_0a_{12}a_{22}Z.
\end{align}

{\em Case 1.  $a_{11}\neq 0$.}   By (\ref{eqn_ax1_ay1}) we have $a_{21}=0$, and since $\det(a_{ij})\neq 0$ it follows that $a_{22}\neq 0$, and so $a_{12}=0$ by (\ref{eqn_ax2_ay2}).  Thus $\phi \restr_{V_1}$ is diagonal in the basis $X_0,X_1,X_2$.  Hence it is induced by an inner automorphism $I_g:G\ra G$, where $g$ is diagonal.

{\em Case 2. $a_{11}=0$.}  Since $\det(a_{ij})\neq 0$ it follows that $a_{12}$ and $a_{21}$ are both nonzero.   Then (\ref{eqn_ax2_ay2}) gives $a_{22}=0$.  Therefore modulo post-composition with the automorphism $\tau$ we get that $\phi \restr_{V_1}$ is diagonal, and so (2) holds.

Now suppose $n\geq 5$. 

First observe that
$$\{x\in V_1\mid   \text{rank}(\text{ad}\; x|_{V_1})=1\}=(\R X_{1,2}\cup \R X_{n-1,n})\backslash\{0\}.$$
  So $\phi  $ either  preserves or switches the two lines $\R X_{1,2}$, $\R X_{n-1,n}$, 
   and therefore after postcomposing with $\tau$ if necessary, we may assume without loss of generality that $\phi$ preserves them.
   Then we observe that 
   $$\{x\in V_1\mid   \text{rank}(\text{ad}\; x|_{V_{n-2}})=0\}=\text{span}\{X_{2,3}, \cdots, X_{n-2, n-1}\}.$$
    So $\phi  (\mathfrak h_0)=\mathfrak h_0$,  where $\mathfrak h_0$   is  the Lie subalgebra
       of $\fn$ generated by 
     $X_{2,3}, \cdots, X_{n-2, n-1}$. Note $\mathfrak h_0$ is  isomorphic to $\fn_{n-2}$, i.e. the strictly upper triangular matrices in $\gll(n-2,\R)$.  
  
Suppose $n=5$.  In this case $\mathfrak h_0{\cap V_1}=\text{span}\{X_{2,3}, X_{3,4}\}$.  We have 
 $$\{x\in \mathfrak h_0\cap V_1\mid [x, X_{1,2}]=0\}=\R X_{3,4}$$
  and 
  $$\{x\in \mathfrak h_0\cap V_1\mid [x, X_{4,5}]=0\}=\R X_{2,3}.$$

Because $\phi  $ preserves the two lines $\R X_{1,2}$, $\R X_{4,5}$ then it also preserves the two lines $\R X_{2,3}$, $\R X_{3,4}$.  Hence $\phi$ is diagonal in the basis $\{X_{i,i+1}\}_{1\leq i\leq 4}$, and  therefore (2) holds.

Now we assume $n\ge 6$ and that (2) holds for $\fn_k$ with $k\le n-2$.  
 We already observed  $\phi  (\mathfrak h_0)=\mathfrak h_0$, and have reduced to the case that the two lines $\R X_{1,2}$, $\R X_{n-1,n}$ are preserved by $\phi$.  Since $\mathfrak h_0$ is isomorphic to $\fn_{n-2}$,  the  induction hypothesis  implies that either 
  $\phi  (\R X_{i, i+1})=\R X_{i, i+1}$ for all $i=2,\cdots, n-2$ or 
$\phi  (\R X_{i, i+1})=\R X_{n-i, n+1-i}$  for all $i=2,\cdots, n-2$.  
Since $[X_{1,2}, X_{2,3}]\not=0$ and 
$[X_{1,2}, X_{n-2,n-1}]=0$, and $\phi  $ is an automorphism of Lie algebra,   we see that 
 $\phi  (\R X_{i, i+1})=\R X_{i, i+1}$ for all $i=1,\cdots, n-1$,
and so (2) holds.

(3). Let $\Phi$ be a graded automorphism, so by (2) we have $\Phi=\tau^{\eps}\circ I_g$ for $\eps\in \{0,1\}$ and some $g$ diagonal.   If $\eps=0$ then $D\Phi(X_{i,i+1})\in \R X_{i,i+1}$, so $\Phi$ preserves $K_j$ for all $j$.  On the other hand, by \eqref{eqn_tau_on_fg} we have $\tau(K_j)=K_{n-j}$, so if $\Phi(K_j)=K_j$ for all $1\leq j\leq n-1$ then $\eps=0$.

\end{proof}

\bigskip
We now consider the action of $\aut(G)$ on the flag manifold.

\begin{lemma}
\label{lem_action_aut_g_on_f}

\mbox{}
\ben
\item There is a 1-1 correspondence between cosets of $P$ and their stabilizers in $G$ with respect to the action $G\acts G/P$: if $g_1,g_2\in G$ then 
\begin{align*}
\stab(g_1P)=\stab&(g_2P)\quad \iff \quad g_1Pg_1^{-1}=g_2Pg_2^{-1}\\
&\iff g_1P=g_2P\,.
\end{align*}
\item Every $\Phi\in\aut(G)$ permutes the conjugates of $P$; by (1) we thereby obtain an action of $\aut(G)\stackrel{\rho}{\acts}G/P$ 
defined by 
\begin{equation}
\label{eqn_aut_g_action_def}
\stab(\Phi\cdot gP)=\Phi(\stab(gP))\,.
\end{equation}  
More explicitly, if $\Phi(P)=hPh^{-1}$ then  
\begin{equation}
\label{eqn_aut_g_action_def_2}
\Phi\cdot gP=\Phi(g)hP\,;
\end{equation} 
in particular $\rho(\Phi)$ defines a smooth diffeomorphism.
\item For every $\Phi\in \aut(G)$ the map 
$$
\rho(\Phi):G/P\lra G/P
$$
is $G$-equivariant from $G\stackrel{\ell}{\acts}G/P$ to $G\stackrel{\ell_\Phi}{\acts}G/P$, where $\ell(\bar g)( gP):=\bar ggP$ and $\ell_\Phi(\bar g)(gP):=\Phi(\bar g)gP$.
\item The action $\aut(G)\stackrel{\rho}{\acts} G/P$ preserves the horizontal subbundle $\h\subset T(G/P)$.
\item If $\Phi_0\in \aut(G)$ is transpose-inverse, then $\Phi_0\cdot (W_j)=(W_j^\perp)$ for every $(W_j)\in \f$, i.e. $\rho(\Phi_0)=\psi$.
\een
\end{lemma}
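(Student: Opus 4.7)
The plan is to handle the five parts in sequence: (1) establishes that $P$ is self-normalizing, making the cosets of $P$ correspond bijectively with their stabilizers; this bijection underwrites the action in (2) and the equivariance in (3). Part (4) is the main technical assertion and will be reduced to the two generators of $\aut(G)$. Part (5) is a direct computation.

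For (1), the first equivalence is immediate since $\stab(g_iP) = g_iPg_i^{-1}$ for the left action, and $g_1P = g_2P \implies g_1Pg_1^{-1} = g_2Pg_2^{-1}$ is trivial. The remaining implication reduces to $N_G(P) = P$; I would prove this by lifting to $\gl(n,\R)$: if $h$ normalizes $P^-$, then $h \cdot (W_j^-)$ is a flag stabilized by all of $P^-$, and an induction using the action of strictly lower triangular elementary matrices shows that the only $P^-$-invariant subspaces of $\R^n$ are the $W_j^-$, so $(W_j^-)$ is the unique $P^-$-fixed flag and $h \in P^-$. For (2), one checks that $\aut(G)$ permutes conjugates of $P$ on the two generators from Lemma~\ref{lem_properties_aut_g}(1): inner automorphisms trivially, and $\tau$ because $\tau(P^-)=P^-$ (transpose-inverse takes $P^-$ to $P^+$, and conjugation by $\Pi$ then reverses coordinate order, returning to $P^-$). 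Formula \eqref{eqn_aut_g_action_def_2} is forced by observing that $\stab(\Phi(g)hP) = \Phi(g)\Phi(P)\Phi(g)^{-1} = \Phi(\stab(gP))$ and invoking the uniqueness from (1); smoothness of $\rho(\Phi)$ is then transparent. Part (3) is a one-line verification from the same formula.

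Part (4) is the main content. I would first upgrade $\rho$ to a group homomorphism $\aut(G) \to \mathrm{Diff}(G/P)$ via the uniqueness in (1), reducing the claim to the two generators. For an inner automorphism $I_k$, formula \eqref{eqn_aut_g_action_def_2} with $h=k$ gives $\rho(I_k) = L_k$, left translation by $k$, which preserves $\h$ because $\h$ is $G$-invariant (Lemma~\ref{lem_properties_of_f}(\ref{item_p_invariance_v_1})). For $\tau$, the calculation in (2) gives $h=e$, so $\rho(\tau)(gP) = \tau(g)P$; its differential at $eP$ is the map on $\fg/\fp$ induced by $D\tau$. Since $D\tau$ preserves $V_1 \subset \fn$ by Lemma~\ref{lem_properties_aut_g}(2), and $\fn \to \fg/\fp$ is an isomorphism by Lemma~\ref{lem_properties_of_f}(\ref{item_fn_fg_fp_isomorphism}), the induced map preserves $\hat V_1 = \h(eP)$. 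Equivariance (3) then propagates preservation of $\h$ to every point of $G/P$.

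For (5), transpose-inverse $\Phi_0$ satisfies $\Phi_0(P^-) = P^+ = \Pi P^- \Pi^{-1}$ in $\gl(n,\R)$, so $\Phi_0(P) = \bar\Pi P \bar\Pi^{-1}$ and one takes $h = \bar\Pi$. For $g \in \gl(n,\R)$ the $j$-th component of the flag $\rho(\Phi_0)(g \cdot (W_j^-))$ is $(g^t)^{-1}\Pi W_j^- = (g^t)^{-1}W_j^+$, while the $j$-th component of $\psi(g \cdot (W_j^-))$ is $(gW_{n-j}^-)^\perp = (g^t)^{-1}(W_{n-j}^-)^\perp = (g^t)^{-1}W_j^+$, matching. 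The main obstacle is Part (4), specifically preservation of $\h$ under $\rho(\tau)$, which requires carefully tracking how the outer automorphism interacts with the grading of $\fn$ through the isomorphism $\fn \cong \fg/\fp$.
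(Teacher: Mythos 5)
Your proposal is correct and follows essentially the same route as the paper: (1) via $N_G(P)=P$, (2) by checking $\tau(P)=P$ and deriving the formula from stabilizer uniqueness, (4) by reduction to the generators $\tau$ and inner automorphisms, checking preservation of $\h$ at the basepoint $eP$ and propagating via (3), and (5) by the same orthogonality computation. The only (welcome) additions are that you supply an argument for the self-normalizing property of $P$ that the paper merely asserts, and you make explicit that $\rho$ is a group homomorphism before invoking the generating set.
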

\begin{proof}(1). Note that the normalizer of $P$ in $G$ is $P$ itself.  Therefore 
\begin{align*}
\stab&(g_1P)=\stab(g_2P)\quad \iff \quad g_1Pg_1^{-1}=g_2Pg_2^{-1}\\
&\iff (g_2^{-1}g_1)P(g_2^{-1}g_1)^{-1}=P\quad \iff\quad g_2^{-1}g_1\in P\\
&\iff g_1P=g_2P\,.
\end{align*}

(2).  Note that $\tau(P)=P$ so $\tau$ permutes the conjugates of $P$; by Lemma~\ref{lem_properties_aut_g}(1) it follows that every $\Phi\in\aut(G)$ permutes the conjugates of $P$.  Since $\stab(gP)=gPg^{-1}$, if $\Phi(P)=hPh^{-1}$, then for every $g\in G$ 
\begin{align*}
\stab(\Phi\cdot gp)=\Phi(\stab(gP))=\Phi(gPg^{-1})\\
=\Phi(g)hPh^{-1}(\Phi(g))^{-1}=\stab(\Phi(g)hP)
\end{align*}
so $\Phi\cdot gP=\Phi(g)hP$.

(3).  For $g,\bar g\in G$, by (2) we have
\begin{align*}
\Phi\cdot(\ell(\bar g)\cdot gP)=\Phi\cdot(\bar ggP)=\Phi(\bar gg)hP\\
=\ell(\Phi(\bar g))\cdot(\Phi(g)hP)=\ell_\Phi(\bar g)\cdot(\Phi\cdot gP)
\end{align*}
as claimed.

(4).  If $\Phi=I_{\bar g}$ is an inner automorphism, then $I_{\bar g}\cdot gP=\bar ggP$ by \eqref{eqn_aut_g_action_def_2}; hence $I_{\bar g}\cdot$ preserves $\h$ by Lemma~\ref{lem_properties_of_f}(9).  In the case $\Phi=\tau$, we have $\tau(P)=P$, so by \eqref{eqn_aut_g_action_def_2} we have $\tau\cdot gP=\tau(g)P$, and since $\tau(X_{i,i+1})=X_{n-i,n-i+1}$ we have $\tau(V_1)=V_1$.  It follows that the differential of $\tau\cdot : G/P\ra G/P$ at $P$ preserves $\h(P)$.  Using (3) and the fact that the actions $\ell$ and $\ell_\tau$ preserve $\h$, we conclude that $\tau\cdot$ preserves $\h$.    By Lemma~\ref{lem_properties_aut_g} the group $\aut(G)$ is generated by $\tau$ and the inner automorphisms, so (4) follows.

(5).  Let $\Pi\in \gl(n,\R)$ be the permutation matrix with $\Pi e_i=e_{n-i}$, so $\Phi_0(P)=\Pi P\Pi^{-1}$.  If $(W_j)=g\cdot (W_j^-)$, then using \eqref{eqn_aut_g_action_def_2} we have
\begin{align*}
\Phi_0\cdot (W_j)&=\Phi_0\cdot (g\cdot(W_j^-))=(\Phi_0(g)\Pi)\cdot(W_j^-)\\
&=((g^{-1})^t\Pi W_j^-)=((g^{-1})^tW_j^+)\,.
\end{align*}
Since $(g^{-1})^tW_j^+$ is orthogonal to $gW_{n-j}^-$ for every $1\leq j\leq n-1$, assertion (5) follows.

\end{proof}

\bigskip
\subsection*{Fibrations between flag manifolds}
We now consider partial flags and the associated flag manifolds.

If $\Si\subset \{1,\ldots,n-1\}$, we let $\f_{\Si}$ be the collection of partial flags $(W_j)_{j\in \Si}$ where $W_j\subset \R^n$ has dimension $j$.  Then, as in the case when $\Si=\{1,\ldots,n-1\}$, the set $\f_{\Si}$ is a smooth submanifold of the product $\prod_{j\in \Si}G(j,n)$ of Grassmannians, and the actions $G\acts G(j,n)$ yield a transitive smooth action $G\acts \f_{\Si}$.  Taking $(W_j^-)_{j\in \Si}$ to be the basepoint, and letting $P_{\Si}=\stab((W_j^-)_{j\in \Si})\subset G$ be its stabilizer, we obtain a $G$-equivariant diffeomorphism $G/P_{\Si}\ra \f_{\Si}$; we identify $\f_{\Si}$ with $G/P_{\Si}$ accordingly.    We will be interested in the case when $\Si=\{j\}$ for some $j\in \{1,\ldots,n-1\}$, so to simpify notation we let $\f_j:=\f_{\{j\}}=G(j,n)$.

\begin{lemma}
\label{lem_properties_fibration_between_flag_manifolds}

\mbox{}
\ben
\item If $\Si_1\subset \Si_2\subset \{1,\ldots,n-1\}$ then we obtain a smooth $G$-equivariant fibration $\pi_{\Si_1,\Si_2}:\f_{\Si_2}\ra \f_{\Si_1}$ by ``forgetting subspaces'', i.e. $\pi_{\Si_1,\Si_2}((W_j)_{j\in \Si_2}):=(W_j)_{j\in \Si_1}$.  
\item The fiber of $\pi_{\Si_1,\Si_2}$ passing through $(W_j^-)$ is the $P_{\Si_1}$ orbit $P_{\Si_1}\cdot (W_j-)$, which is diffeomorphic to $P_{\Si_1}/P_{\Si_2}$.  
 To simplify notation, for $j\in \{1,\ldots,n-1\}$ we let  $\f_j:=\f_{\{j\}}=G(j,n)$ and
 $$\pi_j:=\pi_{\{j\},\{1,\ldots,n-1\}}:\f=\f_{\{1,\ldots,n-1\}}\ra \f_j\,,
$$
so the fiber of $\pi_j$ passing through the basepoint $P$ is $P_jP$. 
\item The intersection of the fiber $\pi_j^{-1}(W_j^-)$ with $\hat N=N\cdot (W_j^-)$ is $K_j\cdot (W_j^-)$, where $K_j\subset N$ is as in Lemma~\ref{lem_properites_aut_gr_n}(3).  Hence the orbit map $\al:N\ra \hat N$ carries the coset foliation of $K_j$ to the foliation defined by $\pi_j$.
\een
\end{lemma}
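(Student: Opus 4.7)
The plan is to treat the three parts in turn, leaning on the homogeneous-space description already set up in the text. For (1), I would identify $\f_{\Sigma_i}$ with $G/P_{\Si_i}$ via the orbit map through the basepoint $(W_j^-)_{j\in\Si_i}$. Since any $g\in G$ that fixes the full partial flag $(W_j^-)_{j\in \Si_2}$ automatically fixes the forgotten partial flag $(W_j^-)_{j\in\Si_1}$, we have $P_{\Si_2}\subset P_{\Si_1}$, and the forgetting map is then just the standard smooth $G$-equivariant projection $G/P_{\Si_2}\ra G/P_{\Si_1}$; smoothness and the fibration property are inherited from the homogeneous-space quotient. (Alternatively, the map is the restriction of the smooth coordinate projection $\prod_{j\in \Si_2}G(j,n)\ra\prod_{j\in\Si_1}G(j,n)$.)

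For (2), by $G$-equivariance and transitivity, the fiber over $(W_j^-)_{j\in\Si_1}$ equals the $P_{\Si_1}$-orbit of $(W_j^-)_{j\in\Si_2}\in\f_{\Si_2}$. The stabilizer of $(W_j^-)_{j\in\Si_2}$ inside $P_{\Si_1}$ is $P_{\Si_1}\cap P_{\Si_2}=P_{\Si_2}$, so the fiber is diffeomorphic to $P_{\Si_1}/P_{\Si_2}$, as claimed.

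For (3), by (2) the fiber of $\pi_j$ through $(W_j^-)$ is $P_j\cdot (W_j^-)$, and its intersection with $\hat N=N\cdot(W_j^-)$ is $(N\cap P_j)\cdot (W_j^-)$. The core step is therefore to verify $N\cap P_j=K_j$. On the one hand, an element $n\in N$ preserves $W_j^-=\Span(e_{n-j+1},\ldots,e_n)$ iff the entries $n_{ik}$ with $i\leq n-j$ and $k\geq n-j+1$ all vanish, i.e.\ the ``upper-right block'' of $n-I$ is zero. On the other hand, using the bracket relations \eqref{eqn_bracket_relations}, a short combinatorial check shows that the Lie subalgebra $\fk_j$ generated by $\{X_{i,i+1}\}_{i\neq n-j}$ is precisely the set of strictly upper triangular matrices with that same upper-right block vanishing: the generators with $i\leq n-j-1$ produce the strictly upper triangular matrices of the upper-left $(n-j)\times(n-j)$ block, the generators with $i\geq n-j+1$ produce the strictly upper triangular matrices of the lower-right $j\times j$ block, and brackets across the two blocks vanish because the required intermediate generator $X_{n-j,n-j+1}$ is missing. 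Hence $K_j=\exp\fk_j=N\cap P_j$, giving the fiber description. The last sentence follows at once from $N$-equivariance of $\al$: for any $n'\in N$, the left coset $n'K_j$ is mapped to $n'\cdot(K_j\cdot (W_j^-))=\pi_j^{-1}(n'\cdot W_j^-)\cap \hat N$, which is exactly the $\pi_j$-fiber in $\hat N$ through $\al(n')$.

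The only place a little care is needed is the identification $N\cap P_j=K_j$; everything else is formal, coming from the orbit-stabilizer description and the fact that $\Si_1\subset\Si_2$ implies $P_{\Si_2}\subset P_{\Si_1}$.
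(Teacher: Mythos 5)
Your proof is correct. Parts (1) and (2) follow the same route as the paper: both reduce to the general fact that for nested closed subgroups $H_1\subset H_2\subset K$ the projection $K/H_1\to K/H_2$ is a $K$-equivariant fibration with fiber $H_2/H_1$ (the paper states this in exactly that generality; you reproduce it in the special case with the additional remark $P_{\Si_2}\subset P_{\Si_1}$, which is the same content).

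For part (3), your approach is genuinely different from the paper's, and arguably cleaner. You compute directly: the fiber in $\hat N$ is $(N\cap P_j)\cdot(W_j^-)$ by an orbit-intersection argument (using $P\subset P_j$), and then you identify $N\cap P_j=K_j$ by an explicit block-structure calculation showing both equal the set of upper-unitriangular matrices whose rows-$\leq n-j$, columns-$\geq n-j+1$ block vanishes; this is purely algebraic. The paper instead works infinitesimally: it shows the $G$-invariant subbundle of $T(G/P)$ tangent to the $\pi_j$-fibers and the $N$-invariant subbundle tangent to the $K_j$-cosets both have value $(\fk_j+\fp)/\fp=\fp_j/\fp$ at the basepoint and hence agree on $\hat N$, which only gives that $\pi_j^{-1}(W_j^-)\cap\hat N$ is a union of $K_j$-orbits; it then invokes invariance under the dilations $\hat\de_r$ to conclude there is a single $K_j$-orbit. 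Your argument avoids both the tangent-bundle step and the dilation dynamics, at the cost of the explicit matrix identification $N\cap P_j=K_j$, which you carry out correctly (the cross-block brackets vanish because $X_{n-j,n-j+1}$ is excluded, and each diagonal block is generated in full by the adjacent superdiagonal generators). The final sentence, transporting the basepoint fiber to all fibers in $\hat N$ by $N$-equivariance, is exactly right.
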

\begin{proof}

(1) and (2) are a special case of a general fact:  if $K$ is a Lie group and $H_1\subset H_2\subset K$ are closed subgroups, the quotient map $K/H_1\ra K/H_2$ is a $K$-equivariant fibration; fiber passing through $H_1$ is $H_2H_1\subset K/H_1$ and it is diffeomorphic to $H_2/H_1$.

(4).  The subbundle of $T(G/P)$ tangent to the fibers of the fibration $\pi_j:G/P\ra G/P_j$ is $G$-invariant, and its value at the basepoint $P\in G/P$ is the subspace $\fp_j/\fp$.  The subbundle of $TN$ tangent to the coset foliation of $K_j$ maps under the orbit map $\al:N\ra \hat N$ to an $N$-invariant subbundle of $T\hat N$ whose value at $P$ is the subspace $(\fk_j+\fp)/\fp$.  But $\fk_j+\fp=\fp_j$, so the two subbundles are the same.  It follows that $\pi_j^{-1}(W_j^-)\cap\hat N$ is the union of a closed set of $K_j$-orbits in $\hat N$; however, $\pi_j^{-1}(W_j)\cap\hat N$ is invariant under the action of $\hat \de_r$, so it can contain only one $K_j$-orbit.  
\end{proof}

\bigskip
\subsection*{Dynamics of Carnot dilations}~
The following result is a special case of \cite[Lemma 3.9]{tits_alternative}; we give a self-contained proof for the reader's convenience.
\begin{lemma}
\label{lem_dilation_dynamics}
Pick $1\leq j\leq n-1$.
\ben
\item Let $K_1\subset \f_1$ be a compact subset disjoint from $Z_1:=\{W_1\in \f_1\mid W_1\subset W_{n-j}^+\}$, 
and $U_1\subset\f_1$ be an open subset containing $Z_2:=\{W_1\in \f_1\mid W_1\subset W_j^-\}$.  Then there exists $\bar r_1=\bar r_1(K_1,U_1)>0$ such that for every $r\leq \bar r_1$ we have $\hat \de_r(K_1)\subset U_1$.
\item Let $K\subset \f_j$ be a compact subset such that  $W_j\cap W_{n-j}^+=\{0\}$ for every $W_j\in K$, and $U\subset\f_j$ be an open subset containing $W_j^-$.  Then there exists $\bar r=\bar r(K,U)>0$ such that for every $r\leq \bar r$ we have $\hat \de_r(K)\subset U$.
\een
\end{lemma}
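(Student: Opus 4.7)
Both parts describe the asymptotic dynamics of $\hat\de_r$ as $r\searrow 0$.  The diagonal matrix $g_r=\diag(r^{-1},\ldots,r^{-n})$ has strictly increasing eigenvalues $r^{-1}<r^{-2}<\ldots<r^{-n}$, so the dominant directions for $0<r\ll 1$ are spanned by the last basis vectors $e_{n-j+1},\ldots,e_n$, i.e.\ by $W_j^-$.  Consequently the expected attractor in $\f_j$ is the single point $\{W_j^-\}$, and in $\f_1$ it is its projectivization $Z_2$; the task is to quantify this contraction uniformly on the given compact sets.

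The plan for (2) is to work in the affine chart on $\f_j$ centered at $W_j^-$.  The open set $\hat\f_j:=\{W_j\in\f_j\mid W_j\cap W_{n-j}^+=\{0\}\}$ is diffeomorphic to $\hom(W_j^-,W_{n-j}^+)$ via $T\mapsto \{v+Tv:v\in W_j^-\}$, with $T=0$ corresponding to $W_j^-$.  Since $g_r$ preserves the splitting $\R^n=W_j^-\oplus W_{n-j}^+$, a direct calculation yields $\hat\de_r(\text{graph}(T))=\text{graph}(g_rTg_r^{-1})$ with
\[
(g_rTg_r^{-1})_{\ell k}=r^{k-\ell}T_{\ell k},\qquad 1\leq \ell\leq n-j,\ n-j+1\leq k\leq n.
\]
All exponents satisfy $k-\ell\geq 1$, so $\|g_rTg_r^{-1}\|\leq r\cdot C\,\|T\|$.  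Since $K\subset\hat\f_j$ is compact, the corresponding $T$'s have uniformly bounded norm, hence $g_rTg_r^{-1}\to 0$ uniformly on $K$ as $r\to 0$; taking $\bar r$ small enough that the associated $T_r$'s lie in the preimage of $U$ under the chart gives (2).

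For (1) the corresponding pointwise limit $W_1\mapsto\lim_{r\to 0}\hat\de_r(W_1)$ on $\f_1\setminus Z_1$ is \emph{not} continuous: distinct starting lines limit to distinct points in $Z_2$ according to which of $v_{n-j+1},\ldots,v_n$ is the last nonzero coordinate.  I would therefore argue by contradiction.  If the conclusion fails, pick sequences $W_1^{(m)}\in K_1$ and $r_m\searrow 0$ with $\hat\de_{r_m}(W_1^{(m)})\notin U_1$.  By compactness of $K_1$ and $\f_1$, after passing to a subsequence, $W_1^{(m)}\to W_1^\infty\in K_1$ and $\hat\de_{r_m}(W_1^{(m)})\to L\in\f_1\setminus U_1$.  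Lift to unit spanning vectors $v^{(m)}\to v^\infty$; since $W_1^\infty\notin Z_1$, some $v^\infty_{k_0}$ with $k_0\geq n-j+1$ is nonzero.  Normalize $\hat\de_{r_m}(v^{(m)})$ by $M_m:=\|\hat\de_{r_m}(v^{(m)})\|_\infty$ and use the lower bound $M_m\geq |v^{(m)}_{k_0}|\,r_m^{-k_0}\geq c\,r_m^{-k_0}$ for large $m$: then for every $k\leq n-j$ the $k$-th coordinate of the normalized vector is bounded by $C\,r_m^{k_0-k}\to 0$.  Hence $L$ is represented by a vector in $W_j^-$, i.e.\ $L\in Z_2\subset U_1$, contradicting $L\notin U_1$.

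The chief subtlety is the discontinuity of the limit in (1), which is precisely what rules out a naive continuity argument and forces the subsequence/contradiction approach; beyond this, both parts reduce to the same elementary weight count ``$k-\ell\geq 1$'' underlying the ordered spectrum of $g_r$, so no further obstacle is anticipated.
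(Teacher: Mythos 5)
Your proof is correct, but takes a genuinely different route from the paper's on both parts, and in a rather pleasing way. For part (2) you work directly in the affine chart on $\f_j$ centred at $W_j^-$, where $\hat\de_r$ acts linearly on $\hom(W_j^-,W_{n-j}^+)$ by $T\mapsto g_rTg_r^{-1}$, and the entry-wise scaling $r^{k-\ell}$ with $k-\ell\geq 1$ gives a uniform linear contraction rate $r$ on any bounded set of $T$'s. The paper does (2) the other way around: it \emph{deduces} (2) from (1), by forming the compact set $K_1$ of lines contained in some $W_j\in K$ and using the elementary fact that a $j$-plane is determined by the set of lines it contains. For part (1) you argue by compactness and contradiction, normalizing a spanning vector of $\hat\de_{r_m}(W_1^{(m)})$ in sup norm and showing that the first $n-j$ coordinates of a subsequential limit must vanish; the paper instead constructs an explicit ratio function $\be:\f_1\setminus Z_1\ra[0,\infty)$, $\be(W_1)=\|\pi_{W_{n-j}^+}v\|/\|\pi_{W_j^-}v\|$, which satisfies the pointwise contraction estimate $\be(\hat\de_rW_1)\leq r\,\be(W_1)$, and reads off the conclusion from sublevel sets of $\be$. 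The paper's $\be$ can be seen as a one-dimensional shadow of your linear contraction in (2), so in spirit both approaches exploit the same weight count $k-\ell\geq 1$. Your observation that the pointwise limit $W_1\mapsto\lim_{r\ra 0}\hat\de_r(W_1)$ is discontinuous on $\f_1\setminus Z_1$ is apt and is precisely what makes a naive pointwise-plus-continuity argument fail here, necessitating either your compactness/subsequence device or the paper's quantitative $\be$-estimate.
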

\begin{proof}
(1).  Let $\pi_{W_{n-j}^+}$  and $\pi_{W_j^-}$ be the projections onto the summands of the decomposition $\R^n=W_{n-j}^+\oplus W_j^-$.
Define $\hat\be:\R^n\setminus W_{n-j}^+\ra [0,\infty)$ by $\hat\be(v):=\frac{\|\pi_{W_{n-j}^+}v\|}{\|\pi_{W_j^-}v\|}$.   Note that by homogeneity $\hat\be$ descends to $\be:\f_1\setminus Z_1\ra [0,\infty)$.

Since $K_1$ is compact and $U_1$ is open, we may choose $C_2\leq C_1<\infty$ such that $\be(K_1)\subset [0,C_1]$ and $\be^{-1}([0,C_2])\subset U$.  
If $v\in \R^n\setminus W_{n-j}^+$ and $r\leq 1$ then 
\begin{align*}
\|\pi_{W_{n-j}^+}(\hat\de_rv)\|&=\|\sum_{i=1}^{n-j}r^{-i}v_i\|\leq r^{-(n-j)}\|\pi_{W_{n-j}^+}v\|\\
\|\pi_{W_j^-}(\hat\de_rv)\|&=\|\sum_{i=n-j+1}^nr^{-i}v_i\|\geq r^{-(n-j+1)}\|\pi_{W_j^-}v\|\\
\implies&\hat\be(\hat\de_rv)\leq r\hat\be(v)\,.
\end{align*}
Hence for $W_1\in\f_1\setminus Z_1$ and $r\leq 1$ we have $\be(\hat\de_rW_1)\leq r\be(W_1)$.
Taking $\bar r_1:=C_2/C_1$, if $r\leq \bar r_1$ then 
$$
\hat\de_r(K_1)\subset\hat \de_r(\be^{-1}([0,C_1]))\subset \be^{-1}([0,C_2])\subset U_1\,.
$$

(2).  Note that $$K_1:=\{W_1\in\f_1\mid \exists W_j\in K\;\text{s.t.}\;W_1\subset W_j\}$$ is a compact subset of $\f_1\setminus Z_1$,   and there is an open subset $U_1\subset\f_1$  containing $Z_2$ such that 
if $W_j\in \f_j$ and $\{W_1\in\f_1\mid W_1\subset W_j\}\subset U_1$, then $W_j\in U$.
Let $\bar r=\bar r_1(K_1,U_1)$ be as in (1), and choose $W_j\in K$,  $r\leq \bar r$.  Then by the choice of $r$, for every $W_1'\in\f_1$ with $W_1'\subset W_j$ we have $\hat\de_r(W_1')\in U_1$.  Therefore $\{W_1\in \f_1\mid W_1\subset \hat\de_r(W_j)\}\subset U_1$, and hence $\hat \de_r(W_j)\in U$, by our assumption on $U_1$.
\end{proof}

\bigskip\bigskip
\subsection{Sobolev mappings and the Pullback Theorem}
\label{subsec_sobolev_mappings_pullback_theorem}~
We give a very brief discussion here, and refer the reader to \cite[Section 2]{KMX1} and the references therein for more details.

Let $H$, $H'$ be Carnot groups with gradings $\fh=\oplus_jV_j$, $\fh'=\oplus_j V_j'$, Carnot dilations denoted by $\de_r$, and Carnot-Caratheodory distance functions $d_{CC}$, $d_{CC}'$.  Let $\nu=\sum_jj\dim V_j$ be the homogeneous dimension of $H$.  

Choose $p>\nu$.   If $U\subset H$ is open and $f:U\ra H'$ is a continuous map then $f$ is a $W^{1,p}_{\loc}$-mapping if there exists $g\in L^p_{\loc}(U)$ such that for every $1$-Lipschitz function $\psi:H'\ra \R$ and every unit length $X\in V_1$, the distribution derivative $X(\psi\circ f)$ satisfies $|X(\psi\circ f)|\leq g$ almost everywhere.  If $f:H\supset U\ra U'\subset H'$ is a quasiconformal homeomorphism then $f$ is a $W^{1,p}_{\loc}$-mapping for some $p>\nu$.

If $f:U\ra H'$ is a $W^{1,p}_{\loc}$-mapping then for a.e. $x\in U$ the map $f$ has a Pansu differential, i.e. letting $f_x:=\ell_{f(x)^{-1}}\circ f\circ \ell_x$, $f_{x,r}:= \de_r^{-1}\circ f_x\circ \de_r$ there is a graded group homomorphism  $D_Pf(x):H\ra H'$ such that  
$$
f_{x,r}\stackrel{C^0_{\loc}}{\lra} D_Pf(x)
$$
as $r\ra 0$.   A Lie group homomorphism $\Phi: H\ra H'$ is graded if $\de_r\circ \Phi=\Phi\circ \de_r$ for all $r>0$.  We will often use $D_Pf(x)$ to denote the induced graded homomorphism of Lie algebras $\fh\ra \fh'$.  

We let $\{X_j\}_{1\leq j\leq \dim H}$ be a graded basis for $\fh$, and $\{\th_j\}_{1\leq j\leq \dim H}$ be the dual basis. 

The {\em weight} of a subset $I\subset \{1,\ldots,\dim H\}$ is given by 
$$
\wt I:=-\sum_{i\in I}\deg i
$$
where $\deg:\{1,\ldots,\dim H\}\ra \{1,\ldots, s\}$ is defined by $\deg i=j$ iff $X_i\in V_j$.  For a non-zero  left-invariant form
$\alpha = \sum_{I} a_I \theta_I$ we define $\wt(\alpha) = \max \{ \wt I : a_I \ne 0\}$ and set $\wt(0):=-\infty$; here $\th_I$ denotes the wedge product $\La_{i\in I}\th_i$.

We use primes for the objects associated with $H'$.

Fix $p>\nu$, and let $f:U\ra H'$ be a $W^{1,p}_{\loc}$-mapping for some open subset $U\subset H$.  If $\om\in\Om^k(H')$ is a differential $k$-form with continuous coefficients, the {\em Pansu pullback of $\om$} is the $k$-form with measurable coefficients $f_P^*\om$ given by 
$$
f_P^*\om(x):=(D_Pf(x))^*\om(f(x))\,,
$$ 
where  $D_Pf(x):\fh\ra \fh'$  is the Pansu differential of $f$ at $x\in U$.

 We will use the following special case of  \cite[Theorem 4.2]{KMX1}:
\begin{theorem}[Pullback Theorem (special case)]  \label{co:pull_back2}
Suppose $\varphi \in C_c^\infty(U)$ and  that $\alpha$ and $\beta$ are closed left invariant forms
which satisfy 
\begin{equation} \label{eq:weight_special}
\deg \alpha + \deg \beta = N -1 \quad \hbox{and} \quad  \wt(\alpha) + \wt(\beta) \le -\nu + 1.
\end{equation}
Then 
\begin{equation} \label{eq:pull_back_identity_special}
 \int_U f_P^*(\alpha) \wedge d(\varphi \beta) = 0.
\end{equation}
\end{theorem}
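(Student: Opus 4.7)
The strategy is smooth approximation combined with Stokes's theorem, where the weight hypothesis is used to kill the discrepancy between the ordinary pullback and the Pansu pullback. I would first approximate $f$ by smooth maps $f_k : U \to H'$ with $f_k \to f$ in $W^{1,p}_{\loc}$, using a Carnot-adapted mollification; the hypothesis $p>\nu$ ensures local uniform convergence and $L^p_{\loc}$ convergence of horizontal derivatives. For each smooth $f_k$, closedness of $\alpha$ and $\beta$ together with compact support of $\varphi$ gives, by Stokes,
$$
\int_U f_k^*(\alpha)\wedge d(\varphi\beta) \;=\; \pm\int_U d\bigl(\varphi\,f_k^*(\alpha)\wedge\beta\bigr) \;=\; 0.
$$

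The key second step is to replace the ordinary pullback $f_k^*\alpha$ by the Pansu pullback $(f_k)_P^*\alpha$. The Pansu differential $D_P f_k(x)$ is the weight-preserving graded part of $Df_k(x)$, so $f_k^*\alpha - (f_k)_P^*\alpha$ decomposes as a sum of terms of weight strictly smaller than $\wt(\alpha)$. When wedged with $d(\varphi\beta)=d\varphi\wedge\beta$, whose weight is at most $\wt(\beta)-1$ (because $d\varphi$ expressed in the left-invariant basis has maximal weight $-1$, coming from its horizontal component), the resulting top-degree forms carry total weight strictly less than $\wt(\alpha)+\wt(\beta)-1\le -\nu$. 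On a Carnot group, however, the only nonzero top-degree left-invariant form has weight exactly $-\nu$, so every summand of strictly smaller weight vanishes pointwise. This forces
$$
f_k^*\alpha\wedge d(\varphi\beta) \;=\; (f_k)_P^*\alpha\wedge d(\varphi\beta)
$$
as top-degree forms, and the Stokes identity becomes $\int_U (f_k)_P^*(\alpha)\wedge d(\varphi\beta)=0$. The saturation condition $\wt(\alpha)+\wt(\beta)\le -\nu+1$ is used sharply here: it guarantees that the top-weight contribution of $\alpha$ survives while all other contributions are forced to zero.

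The main obstacle, and the step I expect to require the most care, is passing to the limit $k\to\infty$. The Pansu pullback $(f_k)_P^*\alpha$ is a polynomial in the horizontal weak derivatives $X_i f_k$ of total horizontal degree $|\wt(\alpha)|\le\nu$; since $p>\nu$, H\"older's inequality together with $L^p_{\loc}$-convergence of the derivatives of $f_k$ yields $(f_k)_P^*\alpha \to f_P^*\alpha$ in $L^{p/|\wt(\alpha)|}_{\loc}$. Because $d(\varphi\beta)$ is smooth and compactly supported, the wedge product converges in $L^1_{\loc}$, so passing to the limit delivers \eqref{eq:pull_back_identity_special}. The technical delicacy lies in commuting the limit with the nonlinear multilinear operation $f\mapsto f_P^*\alpha$ at the sharp integrability level; this is exactly where the hypothesis $p>\nu$ is essential, and a careful choice of approximation (ensuring $D_P f_k\to D_P f$ in $L^p_{\loc}$ and not merely weakly) is what makes the argument go through.
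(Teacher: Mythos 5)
The paper does not prove this statement itself: it is quoted verbatim as a special case of \cite[Theorem 4.2]{KMX1}, so there is no in-paper argument to compare your proposal against. I will therefore assess the proposal on its own terms.

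Your overall scheme (regularize, apply Stokes, kill the discrepancy between $f_k^*$ and $(f_k)_P^*$ by a weight count, pass to the limit) is a reasonable skeleton, but the middle step hides the actual content of the theorem and, as written, it does not go through. You claim that for a smooth approximant $f_k$ the difference $f_k^*\alpha - (f_k)_P^*\alpha$ has weight strictly smaller than $\wt(\alpha)$, on the grounds that $D_P f_k$ is the ``weight-preserving graded part'' of $Df_k$. The trouble is that a generic smooth map $f_k$ into a Carnot group $H'$ is not contact, so the Pansu differential $D_Pf_k$ does not exist, and $(f_k)_P^*\alpha$ is undefined. If instead you \emph{define} $(f_k)_P^*\alpha$ to be the weight-$\wt(\alpha)$ component of $f_k^*\alpha$ (equivalently, the pullback by the block-diagonal part of $Df_k$), then the Stokes identity for this object is immediate, but now you lose the convergence to $f_P^*\alpha$: the diagonal blocks $V_j\to V_j'$ of $Df_k$ for $j\ge 2$ are independent data involving derivatives of $f_k$ along the higher strata, whereas $D_Pf|_{V_j}$ is the Lie-algebra-homomorphism extension of the horizontal block $D_Pf|_{V_1}$, built out of horizontal derivatives only. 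For $C^2$ \emph{contact} maps the two do agree (one can check this in the Heisenberg group by differentiating the contact relations), but smooth contact maps into a rigid group form a finite-dimensional family and cannot approximate an arbitrary Sobolev $f$ -- that would beg the very rigidity question. So you are stuck: approximants whose Pansu pullback is unambiguous are contact and cannot converge to $f$, while approximants that do converge to $f$ have an $f_k^*\alpha$ whose weight-$\wt(\alpha)$ piece is the wrong object.

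The final paragraph on H\"older and $L^{p/|\wt(\alpha)|}_{\loc}$-convergence is the correct bookkeeping for the passage to the limit, and $p>\nu$ is used there exactly as one would want; but it sits downstream of the unjustified substitution. The real argument (in \cite{KMX1}) has to express $f_P^*\alpha$ directly as a polynomial in horizontal distributional derivatives of the coordinate functions of $f$, and then regularize and integrate by parts at the level of those scalar functions, using the weight/degree budget to control which Leibniz terms survive. That is a different -- and considerably more delicate -- use of the weight hypothesis than the pointwise vanishing you invoke.
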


\bigskip
We now consider Sobolev mappings on the flag manifold.  

Let $f:\f\supset U\ra \f$ be a continuous map.  Then:
\bit
\item $f$ is a {\bf $W^{1,p}_{\loc}$-mapping} if for every $x\in U$ there is an open neighborhood $V$ of $x$ and group elements $g,g'\in G=\pgl(n,\R)$ such that $V\subset g\cdot \hat N$, $f(V)\subset g'\cdot\hat N$, and the composition 
\begin{equation}
\label{eqn_f_in_charts}
(\rho(g)\circ\al)^{-1}(V)\stackrel{\rho(g)\circ \al}{\lra}V\stackrel{f}{\lra}g'\cdot\hat N\stackrel{(\rho(g')\circ\al)^{-1}}{\lra}N
\end{equation}
is a $W^{1,p}_{\loc}$-mapping. 
\item The map $f$ is {\bf Pansu differentiable at $x\in U$} if for some $x\in U$, $V\subset U$, $g,g'\in G$ as above, the composition \eqref{eqn_f_in_charts} is Pansu differentiable at $(\rho(g)\circ\al)^{-1}(x)$.   By the chain rule for Pansu differentials, Pansu differentiability is independent of the choice of $g,g'$, and the resulting Pansu differential $\fn\ra \fn$ is well-defined up to pre/post composition with graded automorphisms.  In particular, the property being an isomorphism is well-defined.
\eit  
Equivalently,  one may work directly with the flag manifold as an equiregular subriemannian manifold, and use the notion of Pansu differential in that setting  (see \cite[Section 1.4]{gromov_carnot_caratheodory_space_seen_from_within}, \cite{vodopyanov_carnot_manifolds}, and \cite[Appendix A]{KMX1}).

\bigskip

\section{Sobolev mappings and the (virtual) preservation of coset foliations}~
\label{sec_sobolev_mappings_preservation_foliations}

The main result in this section is the following:

\begin{lemma}
\label{lem_main_no_oscillation}
Let $n\ge 4$. 
Let $U\subset N$ be a connected open subset, and for some $p>\nu$ let $f:N\supset U\ra N$ be a $W^{1,p}_{\loc}$-mapping whose Pansu differential is an isomorphism almost everywhere.  Then after possibly composing with $\tau$, if necessary, 
for a.e. $x\in U$ the Pansu differential $D_Pf(x)$ preserves the subspace $\R X_{i,i+1}\subset V_1$ for every $1\leq i\leq n-1$.
\end{lemma}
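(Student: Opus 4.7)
The plan is to first reduce the statement to a pointwise dichotomy, and then rule out ``oscillation'' between the two alternatives using the Pullback Theorem.

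\smallskip
\noindent\emph{Pointwise dichotomy.} For a.e.\ $x \in U$, the map $D_P f(x) : \fn \to \fn$ is a graded Lie algebra isomorphism, hence by Lemma \ref{lem_properites_aut_gr_n}(2) it equals $\tau^{\varepsilon(x)} \circ I_{g(x)}$ for some $\varepsilon(x) \in \{0,1\}$ and diagonal $g(x)$. The function $\varepsilon$ is measurable, yielding a partition $U = U_0 \sqcup U_1$ (up to null sets). On $U_0$ the differential preserves each line $\R X_{i,i+1}$, while on $U_1$ it sends $\R X_{i,i+1}$ to $\R X_{n-i,n-i+1}$ (which, since $n \ge 4$, is distinct from $\R X_{i,i+1}$ for some $i$). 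Post-composing $f$ with $\tau$ interchanges $U_0$ and $U_1$, so the lemma reduces to showing that one of $U_0, U_1$ is essentially empty.

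\smallskip
\noindent\emph{Excluding the mixed case.} Suppose, for contradiction, that both $U_0$ and $U_1$ have positive measure. The strategy is to apply Theorem \ref{co:pull_back2} to a carefully chosen pair of closed left-invariant forms $\alpha, \beta$ on $\fn$ with $\deg \alpha + \deg \beta = \dim N - 1$ and $\wt(\alpha) + \wt(\beta) \le -\nu + 1$, designed so that the sign (or direction in a top-weight component of $\bigwedge^* \fn^*$) of $\Phi^* \alpha$ is controlled by the class $\varepsilon$ of the graded automorphism $\Phi$, and not by the diagonal part $g$. Granting such $\alpha, \beta$, the integrand $f_P^*(\alpha) \wedge d(\varphi \beta)$ has a definite $\varepsilon(x)$-dependent sign almost everywhere, and choosing $\varphi \in C_c^\infty(U)$ appropriately (for instance, with $\varphi \ge 0$ supported in a small ball where $d\varphi$ has a prescribed direction and that meets both $U_0$ and $U_1$ in positive measure) gives a non-zero value for the integral, contradicting \eqref{eq:pull_back_identity_special}.

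\smallskip
\noindent\emph{Main obstacle and expected construction.} The technical heart of the argument is to construct the pair $(\alpha, \beta)$ whose pullback under a graded automorphism isolates the class $\varepsilon$ from the diagonal scaling. The natural candidates use the closed horizontal $1$-forms $\th_{i,i+1}$ (which are closed because $X_{i,i+1}$ is not a bracket in $\fn$) combined with suitable corrections in higher-weight layers chosen so that the resulting form is closed and so that its pullback is invariant in sign under the full diagonal subgroup while being negated (or redirected) by $\tau$. A useful guide is the observation that $\tau$ restricts to a determinant $-1$ map on the $2$-plane $\mathrm{span}(X_{1,2}, X_{n-1,n})$, even though it is orientation-preserving on $\fn$ as a whole; this suggests building $\alpha$ out of a factor like $\th_{1,2} \wedge \th_{n-1,n}$ wedged against complementary high-weight forms arising from the Chevalley--Eilenberg complex. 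For $n = 4$ the bracket relations \eqref{eqn_x_y_z_brackets} allow explicit enumeration of the closed forms; for general $n$ I expect an inductive construction via the subalgebra $\mathfrak{h}_0 \cong \fn_{n-2}$ appearing in the proof of Lemma \ref{lem_properites_aut_gr_n}(2), possibly ruling out finer oscillations by successive applications of the Pullback Theorem with one $(\alpha, \beta)$ pair for each pair of classes to be separated.
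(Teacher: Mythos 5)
The pointwise dichotomy and the decision to apply the Pullback Theorem to forms that separate the two classes $\varepsilon\in\{0,1\}$ are the right ingredients, and in broad outline this matches the paper's strategy. However, the logical mechanism you propose for using the Pullback Theorem is not correct, and this is a genuine gap.

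You want to choose $\varphi\geq 0$ and a form pair $(\alpha,\beta)$ so that $f_P^*(\alpha)\wedge d(\varphi\beta)$ has ``definite $\varepsilon(x)$-dependent sign a.e.'' and hence the integral is nonzero, contradicting Theorem~\ref{co:pull_back2}. This cannot work as stated: the integrand contains $d(\varphi\beta)=d\varphi\wedge\beta$, and in every example relevant here the product reduces to (a constant times) $u(x)\,(X\varphi)(x)\,d\mathrm{vol}$ for some horizontal $X$ and some measurable coefficient $u$ tied to $\varepsilon$. Since $\varphi$ has compact support, $\int X\varphi\,d\mathrm{vol}=0$, so you cannot arrange a ``prescribed direction'' for $d\varphi$ that forces the integral to be nonzero. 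The vanishing $\int u\,(X\varphi)=0$ for \emph{all} $\varphi\in C_c^\infty(U)$ is not a contradiction waiting to happen --- it is an identity, whose correct use is to read off the distributional equation $Xu=0$.

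The paper's proof runs exactly this way. It introduces closed left-invariant forms $\omega_\pm$ (for $n\geq 5$, $\omega_+=\th_{1,2}\wedge\cdots\wedge\th_{1,n}$ and $\omega_-=\th_{n-1,n}\wedge\cdots\wedge\th_{1,n}$) so that $f_P^*\omega_+=u_+\omega_++u_-\omega_-$ with $u_\pm$ supported (up to null sets) on $U_0$ and $U_1$ respectively. Pairing with closed complementary forms $\eta_{k\pm}$ via the Pullback Theorem yields the distributional identities $X_{k,k+1}u_+=0$ for $2\le k\le n-1$ and $X_{k,k+1}u_-=0$ for $1\le k\le n-2$, hence the same vanishing for $\chi_{S_+}$, $\chi_{S_-}$. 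Using $\chi_{S_+}+\chi_{S_-}=1$ to cover all $k$, and the fact that the $X_{k,k+1}$ bracket-generate $\fn$, one concludes $\chi_{S_+}$ is locally constant; by connectedness of $U$ it is globally constant, so $\chi_{S_+}\equiv 0$ or $\equiv 1$. Notice that the connectedness hypothesis on $U$, which your argument never invokes, is essential at the final step; this is another sign that the logic needs to pass through a constancy statement rather than a one-shot contradiction. Your hunch about separating the $\tau$-classes via forms built from $\th_{1,2}$ and $\th_{n-1,n}$ is in the right spirit, but the paper's actual closed forms are the full ``first row'' and ``last column'' wedges, and your heuristic about $\tau$ having determinant $-1$ on $\mathrm{span}(X_{1,2},X_{n-1,n})$ is not the mechanism used.
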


\begin{corollary}
\label{cor_preservation_coset_foliation}
Let  $n\ge 4$ and  $f:U\ra N$ be as in Lemma~\ref{lem_main_no_oscillation}, and $K_j$ be as in Lemma~\ref{lem_properites_aut_gr_n}(3).  Then after possibly composing with $\tau$, if necessary,  for every $1\leq j\leq n-1$, $f$ locally preserves the coset foliation of $K_j$, i.e. for every $x\in U$ there is an $r>0$ such that for every $g\in N$ the image of $gK_j\cap B(x,r)$ under $f$ is contained in a single coset of $K_j$.
\end{corollary}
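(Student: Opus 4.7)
The plan is to deduce the coset-foliation preservation from the pointwise condition on the Pansu differential provided by Lemma~\ref{lem_main_no_oscillation}. After postcomposing with $\tau$ if necessary, that lemma gives $D_Pf(x)\R X_{i,i+1}\subset \R X_{i,i+1}$ for every $1\leq i\leq n-1$ at a.e. $x\in U$. Restricting to those $i\neq n-j$, this yields $D_Pf(x)(V_1\cap\fk_j)\subset V_1\cap\fk_j$, where $V_1\cap\fk_j=\sum_{i\neq n-j}\R X_{i,i+1}$. Since $D_Pf(x)$ is a graded Lie algebra isomorphism and $\fk_j$ is, by definition, the Lie subalgebra of $\fn$ generated by $V_1\cap\fk_j$, it follows that $D_Pf(x)(\fk_j)=\fk_j$ for a.e. $x\in U$.

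Next I would convert this pointwise information into curve-wise information. For each $i\neq n-j$, denote by $X=X_{i,i+1}$ the corresponding left-invariant horizontal vector field on $N$. By the ACL/Fubini property of $W^{1,p}_{\loc}$-mappings for $p>\nu$ on Carnot groups (see \cite{KMX1} and references therein), for a.e. integral curve $\gamma$ of $X$ meeting $U$ the restriction $f\circ\gamma$ is absolutely continuous, Pansu-differentiable a.e. along $\gamma$, and satisfies
\begin{equation*}
(f\circ\gamma)(t)^{-1}\cdot (f\circ\gamma)'(t)=D_Pf(\gamma(t))\cdot X\in V_1\cap\fk_j
\end{equation*}
for a.e. $t$. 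Since $K_j$ is the connected Lie subgroup of $N$ whose Lie algebra $\fk_j$ is generated by $V_1\cap\fk_j$, any absolutely continuous curve in $N$ whose left-logarithmic derivative takes values in $V_1\cap\fk_j$ a.e. must lie in a single left coset of $K_j$. Hence $f\circ\gamma$ is contained in one $K_j$-coset for a.e. such integral curve.

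Finally I would globalize using the Chow--Rashevskii theorem applied to the Carnot subgroup $K_j$: any two points in a common $K_j$-coset can be joined by a concatenation of finitely many integral-curve arcs of the horizontal generators $X_{i,i+1}$, $i\neq n-j$. Given $x\in U$, pick $r>0$ small enough that such concatenations connecting any two points of $gK_j\cap B(x,r/2)$ remain in $B(x,r)$. Iterating the curve-wise conclusion along a.e. such concatenation forces $f$ to send each connected $K_j$-piece in $B(x,r/2)$ into a single $K_j$-coset, at least for a.e. choice of starting point (in the transverse directions). The main obstacle is to promote this ``for a.e. $g$'' statement to ``for every $g$'': this is handled by a Fubini argument over the transverse directions together with the continuity of $f$, which forces the locally constant (in the transverse parameter) assignment $g\mapsto f(gK_j\cap B(x,r/2))$ to be constant on the (connected) transverse slice, giving the claimed coset-foliation preservation.
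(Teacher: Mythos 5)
The paper's proof of this corollary is a one-line citation: the statement follows from Lemma~\ref{lem_main_no_oscillation} together with \cite[Lemma 2.30]{KMX2}, which is a general black-box result for converting an almost-everywhere condition on the Pansu differential into local preservation of a coset foliation. Your proposal is a reasonable reconstruction of what such a lemma must do, and the overall architecture (pointwise $D_Pf$ condition $\Rightarrow$ curve-wise condition along a.e.\ horizontal integral curve $\Rightarrow$ globalization via Chow--Rashevskii and continuity) is the right one. Two points, however, deserve more care than your sketch gives them.

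First, the passage from ``$D_Pf(x)$ preserves $\R X_{i,i+1}$ for a.e.\ $x\in U$'' to ``for a.e.\ integral curve $\gamma$ of $X_{i,i+1}$, $f\circ\gamma$ is AC with left-logarithmic derivative $D_Pf(\gamma(t))\cdot X_{i,i+1}$ a.e.\ in $t$'' is the crux, and you are treating it as a citation to ``the ACL/Fubini property.'' It is not automatic: the Pansu differential is a blow-up limit, not a directional derivative, and a Fubini argument transfers the a.e.\ existence of $D_Pf$ across the foliation by integral curves, but one must separately identify the a.e.\ tangent (or metric speed) of the AC curve $f\circ\gamma$ with the restriction of the Pansu differential to the $X_{i,i+1}$-direction. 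For $W^{1,p}_{\loc}$ maps with $p>\nu$ this identification is true, but it is precisely the nontrivial technical content of the lemma the paper cites, so it shouldn't be waved through.

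Second, your closing Fubini-plus-continuity step is correct in spirit, but the argument that ``locally constant in the transverse parameter'' forces ``constant on the slice'' needs to be phrased so that it uses the closedness of $K_j$. The clean version is: having shown that for a.e.\ transverse parameter $g$ and all $y_1,y_2\in gK_j\cap B(x,r)$ one has $f(y_1)^{-1}f(y_2)\in K_j$, pass to a limit $g_k\ra g$, $y_i^k\ra y_i$ with $y_i^k\in g_kK_j$, and use $f(y_1^k)^{-1}f(y_2^k)\in K_j$ together with continuity of $f$ and the fact that $K_j$ is closed. Your write-up gestures at this but leaves the mechanism ambiguous. With these two points made precise, the proposal matches the paper's intended argument.
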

The corollary follows from Lemma~\ref{lem_main_no_oscillation} and \cite[Lemma 2.30]{KMX2}.  Note that Lemma~\ref{lem_main_no_oscillation} fails when $n=3$ (when $N$ is a copy of the Heisenberg group), even for automorphisms.  Examples from \cite{kmsx_infinitesimally_split_globally_split} show that Lemma~\ref{lem_main_no_oscillation} can fail even for bilipschitz mappings whose Pansu differential preserves the splitting $V_1=\R X_{1,2}\oplus \R X_{2,3}$ for a.e. $x$.

\bigskip
\begin{proof}[Proof of Lemma~\ref{lem_main_no_oscillation}]  We begin with the $n=4$ case of the lemma. 

As in the proof of Lemma~\ref{eqn_x_y_z_brackets}, we let 
\begin{equation*}
\begin{aligned}
X_0=X _{23}\,,\;X_1=X _{12}\,,\;X_2=X _{34}\,,\\
Y_1=X _{13}\,,\;Y_2=-X _{24}\,,\;Z=X _{14}\,.
\end{aligned}
\end{equation*}   
 Let $\alpha_0, \alpha_1, \alpha_2$, $\beta_1, \beta_2$, $\gamma$ be the basis 
 of left invariant $1$-forms that are  dual to $X_0, X_1, X_2$, $Y_1, Y_2$, $Z$. Then using \eqref{eqn_x_y_z_brackets} we have
     \begin{align*}
      d\alpha_0=d\alpha_1=d\alpha_2=0;\\
     d \beta_1=\alpha_0\we\alpha_1,\;\;\; d \beta_2=\alpha_0\we \alpha_2;\\
     d\gamma=\alpha_1\we \beta_2+\alpha_2\we\beta_1.
     \end{align*}

Let  $\om:=\al_1\we\be_1\we\ga$.  By Lemma \ref{lem_properites_aut_gr_n}(2)  on graded automorphisms, 
$$
f_P^*\om=u_1\al_1\we \be_1\we\ga+u_2\al_2\we \be_2\we \ga
$$
where $u_1$, $u_2$ are measurable and with $S_i:=\{u_i\neq 0\}$ the union $S_1\cup S_2$ has full measure and $S_1\cap S_2$ is null.

Let $\eta=\alpha_2\wedge\beta_2$  and pick $\varphi\in C^\infty_c(U)$. Then $\omega$ and $\eta$ are closed left invariant forms  and their degrees and weights satisfy the assumption of  
 the pullback theorem (Theorem \ref{co:pull_back2}),   and so    
 $$\int_U f_P^* \omega\wedge d(\varphi \eta)=0\,.$$      As $ d(\varphi \eta)=d\varphi\wedge \eta$  we have 
     $f_P^* \omega\wedge d(\varphi \eta)=\pm u_1(X_0\varphi) d\text{vol}$  
   and therefore     $\int_U u_1(X_0\varphi)d \vol=0$.   Since $\varphi$ was arbitrary, it follows that   $X_0u_1=0$ in the sense of distributions.  Similarly by picking $\eta=\alpha_0\wedge\beta_2$    we obtain  $X_2u_1=0$.  It follows that $Y_2u_1=[X_2,X_0]u_1=X_2X_0u_1-X_0X_2u_1=0$.  
      Let $H_i$ ($i=1, 2$) be the Lie subgroup of $N$ whose Lie algebra is generated by 
         $X_0$ and $X_i$.  Then we  see that for almost every left coset $gH_2$,  $u_1$ is  locally constant almost everywhere.   
    Consequently  
   $X_0\chi_{S_1}=X_2\chi_{S_1}=Y_2\chi_{S_1}=0$, where $\chi_S$ denotes the characteristic function of a subset $S\subset U$.  
Similarly  by using $\eta=\alpha_1\wedge \beta_1$, $\alpha_0\wedge \beta_1$,   we obtain 
$X_0u_2=X_1u_2=Y_1u_2=0$  and $X_0\chi_{S_2}=X_1\chi_{S_2}=Y_1\chi_{S_2}=0$.    As  $\chi_{S_1}+\chi_{S_2}=1$, we infer  that $X_i\chi_{S_j}=0$ for all 
 $0\le i\le 2$ and $j=1, 2$.    Since $X_0, X_1, X_2$ generates $\mathfrak n$, this gives $X\chi_{S_j}=0$ for every $X\in \fn$, and hence $\chi_{S_j}$ is  locally constant a.e.  By the connectedness of $U$, $\chi_{S_j}$ is  constant  a.e.  
 As $\chi_{S_1}$ takes on only two possible values ($0$ or $1$), we have 
$\chi_{S_1}=0$  a.e. or $\chi_{S_1}=1$  a.e.
 If  $\chi_{S_1}=1$  a.e., then  by  the definition of  $S_1$ we have 
 $Df_x(\mathbb R X_{i})=\mathbb R X_{i}$ for all $0\le i\le 2$ and a.e. $x\in U$.  If 
 $\chi_{S_1}=0$  a.e., then 
 $\tau\circ Df_x(\mathbb R X_{i})=\mathbb R X_{i}$ for a.e. $x\in U$.



Now assume $n\geq 5$.  

Let $\{\th_{i,j}\mid 1\le i<j\le n\}$ be  the basis of left invariant $1$-forms on $N$ that are dual to the basis  $\{X_{i,j}\mid 1\le i<j\le n\}$ of $\fn$.  Then we have 
       \begin{equation}
       \label{eqn_dthij}
       d \th_{i,j}=\left\{
       \begin{array}{rl}
       0& \text{  if    }   j=i+1\\
       -\sum_{k=i+1}^{j-1}\th_{i, k}\we \th_{k,j}  & \text{  if   } j>i+1
       \end{array}\right.
       \end{equation}
   
Let $\omega_+=\th_{1,2}\we\th_{1,3}\we\cdots\we \th_{1,n}$ and
$\omega_-=\th_{n-1,n}\we\th_{n-2,n}\we\cdots\we \th_{1,n}$.  Note that 
$\omega_+$ is closed since  by \eqref{eqn_dthij} we have $d\th_{1,2}=0$,   and   
 $d\th_{1,j}=-\sum_{k=2}^{j-1}\th_{1,k}\we\th_{k,j}$    for $j\ge 3$ and  so 
  $\th_{1,2} \we\cdots \we \th_{1, j-1}\we d\th_{1,j}=0$.   Similarly 
 $\omega_-$ is  closed.  By  Lemma~\ref{lem_properites_aut_gr_n}  we have 
$$f^*_P(\omega_+)=u_+\omega_++u_-\omega_-$$
 for some measurable functions $u_+$, $u_-$.

 For $2\le k\le n-1$, let 
 $$\eta_{k-}=\bigwedge_{2\le i<j\le n,  (i,j)\not=(k, k+1)} \th_{i,j},$$
  and for  $1\le k\le n-2$, let 
 $$\eta_{k+}=\bigwedge_{1\le i<j\le n-1,  (i,j)\not=(k, k+1)} \th_{i,j}.$$
  We show  that  $\eta_{k-}$ is closed.  First we have  $d\th_{l, l+1}=0$.   For $m>l+1$, 
 $d\th_{l, m}=-\sum_{s=l+1}^{m-1}\th_{l, s}\we\th_{s, m}$
   and  at least one of the two terms $\th_{l, s}$, $\th_{s, m}$  is already present in 
 $$\bigwedge_{2\le i<j\le n,  (i,j)\not=(k, k+1), (l,m)} \th_{i,j};$$  it follows that
$d\th_{l, m}\we \bigwedge_{2\le i<j\le n,  (i,j)\not=(k, k+1), (l,m)} \th_{i,j}=0$.   
Similarly  $\eta_{k+}$ is closed.  
  We next show that  the  degree and weight conditions in the pullback theorem  (Theorem \ref{co:pull_back2})    are satisfied: for $\omega_+$   and $\eta=\eta_{k-}$ or $\eta_{k+}$  we have  $\text{degree}(\omega_+)+\text{degree}(\eta)=N-1$ and $\text{wt}(\omega_+)+\text{wt}(\eta)=-\nu+1$.  
    First these conditions are satisfied by $\omega_+$ and $\eta_-$ as 
        $\omega_+\wedge \eta_-=\pm\bigwedge_{(i,j)\not={k, k+1}}\theta_{ij}$ and 
          $\theta_{k, k+1}$ has degree one and weight $-1$.   These conditions are also satisfied by $\omega_+$ and $\eta_+$   as $\tau(\eta_-)=\pm\eta_+$ and so  $\eta_-$, $\eta_+$ have the same degree and weight. 
Hence  by Theorem \ref{co:pull_back2},   for any $\varphi\in C^\infty_c(U)$, we have $\int_U f_P^*(\omega_+)\wedge d(\varphi \eta)=0$.  
 As  $d(\varphi \eta)=d\varphi\wedge \eta$,  
  we get $f_P^*(\omega_+)\wedge d(\varphi \eta_{k-})=\pm u_+X_{k,k+1}\varphi \text{Vol}$    (for $2\le k\le n-1$)  and 
 $f_P^*(\omega_+)\wedge d(\varphi \eta_{k+})=\pm u_-X_{k,k+1}\varphi \text{Vol}$  (for $1\le k\le n-2$). 
 It follows   that distributionally we have  
       $X_{k,k+1} u_+=0$  for all $2\le k\le n-1$  and 
          $X_{k,k+1} u_-=0$  for all $1\le k\le n-2$.

Let $S_+\subset U$ be the subset where the Pansu differential  of $f$   fixes all the directions $\R X_{i, i+1}$.  
  Similarly let $S_-\subset U$ be the set of points $x\in U$ such that 
    $\tau\circ Df_x$  fixes all the directions $\R X_{i, i+1}$. Again  by Lemma \ref{lem_properites_aut_gr_n} (2),
     $S_+\cup S_-$ has full measure in $U$ and  $S_+\cap S_-$  is null.    
Notice that 
 $S_+$ agrees with $\{x\in U\mid u_+(x)\not=0\}$ up to a set of measure $0$  and  $S_-$ agrees with  $\{x\in U\mid u_-(x)\not=0\}$ up to a set of measure $0$.  Hence $X_{k,k+1} \chi_{S_+}=0$ for all $2\le k\le n-1$   and  $X_{k,k+1} \chi_{S_-}=0$ for all $1\le k\le n-2$.
  Since $\chi_{S_+}+\chi_{S_-}=1$ we have $X_{k,k+1} \chi_{S_+}=0$ for all $1\le k\le n-2$.
 So    $X_{k,k+1} \chi_{S_+}=0$ for all $1\le k\le n-1$.   Since the $X_{k, k+1}$, $1\le k\le n-1$ generates the Lie algebra  $\mathfrak n$, we see that  $X \chi_{S_+}=0$ for any $X\in \mathfrak n$ and   we conclude as in the $n=4$ case.

 

\end{proof}

\bigskip
\section{Rigidity for fibration preserving maps}
\label{sec_rigidity_foliation_preserving_maps}

In the section we show that mappings of $\f$ which respect the fibrations $\pi_j:\f\ra \f_j$ for all $1\leq j\leq n-1$ are locally projective.

\begin{definition}
A mapping $f:\f\supset U\ra \f$ is {\bf fibration-preserving} if for every $1\leq j\leq n-1$, and every $W_j\in \f_j$, the image of $U\cap \pi_j^{-1}(W_j)$ under $f$ is contained in $\pi_j^{-1}(W_j')$ for some $W_j'\in\f_j$.  Equivalently, letting $U_j:=\pi_j(U)$, there exist mappings $f_j:U_j\ra \f_j$ such that $\pi_j\circ f=f_j\circ \pi_j$.  A mapping $f:\f\supset U\ra \f$  is {\bf locally fibration preserving} if it is fibration preserving near any point in $U$.
\end{definition}

\begin{proposition}
\label{thm_fiber_preserving_projective}~
Suppose $n\geq 3$ and $f:U\ra \f$ is a continuous, locally fiber preserving mapping, where $U\subset \f$ is a connected open subset.  If   for some $x\in U$ the Pansu differential $D_Pf(x)$ exists and is an isomorphism (see Subsection~\ref{subsec_sobolev_mappings_pullback_theorem}) then $f$ agrees with $g:\f\ra \f$  for some $g\in \pgl(n,\R)$.    
\end{proposition}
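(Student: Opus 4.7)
The plan is to reduce Proposition~\ref{thm_fiber_preserving_projective} to a version of the Fundamental Theorem of Projective Geometry via incidence geometry. Fibration preservation packages $f$ into continuous maps $f_j:U_j\to \f_j=G(j,n)$ satisfying $\pi_j\circ f=f_j\circ \pi_j$, and these collectively respect incidence: whenever a flag in $U$ contains both $W_j$ and $W_k$ with $j<k$, we have $f_j(W_j)\subset f_k(W_k)$. In particular, for every $W_2\in U_2$, $f_1$ carries the open subset of the projective line $\mathbb{P}(W_2)\subset\mathbb{P}^{n-1}$ consisting of those $W_1\subset W_2$ which arise in a flag in $U$ containing $W_2$, into the projective line $\mathbb{P}(f_2(W_2))$. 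Thus $f_1$ is a continuous, locally line-preserving map $U_1\to\mathbb{P}^{n-1}$.

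Next I would use the Pansu differential hypothesis to establish nondegeneracy at the level of $f_1$. By Lemma~\ref{lem_properites_aut_gr_n}(2), $D_Pf(x)=\tau^\eps\circ I_h$ for some diagonal $h$ and some $\eps\in\{0,1\}$. In fact $\eps=0$: fibration preservation forces $D_Pf(x)$ to preserve each subalgebra $\fk_j\subset\fn$ tangent to the fiber of $\pi_j$, but $\tau\circ I_h$ sends $\R X_{n-j,n-j+1}$ (the $V_1$-direction complementary to $\fk_j$) to $\R X_{j,j+1}$, and these coincide for all $j$ simultaneously only in trivial cases excluded by $n\ge 3$. Hence $D_Pf(x)=I_h$ is the differential of the inner automorphism by $h\in \gl(n,\R)$. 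Applying the chain rule to $\pi_j\circ f=f_j\circ \pi_j$ at the level of Pansu differentials, each $f_j$ is then differentiable at $\pi_j(x)$ with invertible differential; in particular $f_1$ is a local homeomorphism near $\pi_1(x)$.

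The core step is a local Fundamental Theorem of Projective Geometry for $f_1$. The map $f_1$ is continuous on the connected open set $U_1\subset\mathbb{P}^{n-1}$, locally sends open pieces of projective lines into projective lines, and is a local homeomorphism at $\pi_1(x)$ whose differential there coincides with that of the action of $h\in \pgl(n,\R)$. A standard FTPG-style argument then yields that $f_1$ agrees with the action of some $g\in\pgl(n,\R)$ on a neighborhood of $\pi_1(x)$; no field automorphism terms arise over $\R$. I expect this to be the main obstacle: running the FTPG argument without assuming global bijectivity, using only local line preservation together with the local nondegeneracy furnished by the Pansu differential at a single point.

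The final step is to propagate local agreement to global agreement. Replacing $f$ with $g^{-1}\circ f$, we may assume $f_1=\id$ on an open neighborhood $V_1\subset U_1$ of $\pi_1(x)$. For any $W_1\in U_1$ sufficiently close to $V_1$, I choose two distinct projective lines through $W_1$, each meeting $V_1$; line preservation together with $f_1\restr_{V_1}=\id$ forces $f_1(W_1)$ to lie on each of these two original lines, so $f_1(W_1)=W_1$. Iterating along paths in the connected set $U_1$ yields $f_1=\id$ on $U_1$. Finally, for every $W_j\in U_j$ the incidence relation gives $W_1=f_1(W_1)\subset f_j(W_j)$ for every $W_1\subset W_j$ arising in a flag in $U$ containing $W_j$; such $W_1$ span $W_j$, so $W_j\subset f_j(W_j)$ and hence $f_j(W_j)=W_j$ by dimension. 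Therefore $f=\id$ on $U$, i.e., the original $f$ equals $g$ on $U$.
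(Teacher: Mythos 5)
Your proposal is structured sensibly and captures the broad strategy — reduce to the line map $f_1$, establish nondegeneracy from the Pansu differential, invoke an FTPG-type local argument, and propagate — but there are two genuine gaps, one of which concerns the main technical content of the result.

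The central gap is the step you flag yourself: ``a standard FTPG-style argument then yields that $f_1$ agrees with the action of some $g\in\pgl(n,\R)$ on a neighborhood of $\pi_1(x)$.'' This is precisely the heart of the matter and is not standard. The classical Fundamental Theorem of Projective Geometry (Theorem~\ref{thm_fundamental_theorem_projective_geometry}) assumes a \emph{bijection} of all of $\mathbb{P}^{n-1}$ to itself, and its proof uses global line intersections and the full field structure. Here $f_1$ is only continuous, only locally defined, not assumed injective, and only locally line-preserving; all the work in the paper goes into making this local version actually run. Concretely: one must produce a small augmented projective frame inside the chart $\hat N$ (Lemma~\ref{lem_small_projective_frame}), show that $f_1$ maps it to a projective frame (the claim in the proof, using the Pansu differential), normalize by elements of $\pgl(n,\R)$ so the frame is fixed, translate the resulting incidence constraints into a map $\phi:\R^n\supset B(0,r)\to \R^n$ preserving $n+1$ families of parallel lines, and then prove the analytic Lemma~\ref{lem_special_affine}/\ref{lem_special_affine_n_geq_4} showing such a $\phi$ is forced to be of the restricted affine form $x\mapsto mx+b$. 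Without something playing the role of Lemma~\ref{lem_special_affine} and the frame-normalization argument, there is no proof.

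The second gap is the nondegeneracy step. You claim that Pansu differentiability of $f$ at the single point $x$, with invertible differential, implies ``$f_1$ is a local homeomorphism near $\pi_1(x)$.'' Pansu differentiability at one point gives control of $\hat\de_{r^{-1}}\circ f\circ\hat\de_r$ in the limit $r\to 0$ (a blow-up statement), not a local homeomorphism statement about $f_1$ on a fixed neighborhood; a continuous map differentiable at a single point need not be a local homeomorphism there. What is both available and sufficient is weaker: for $r$ small, $\hat\de_{r^{-1}}\circ f\circ\hat\de_r$ is $C^0$-close to the nondegenerate automorphism $D_Pf(x)$, and hence $f_1$ sends the $\hat\de_r$-shrunk projective frame to a genuine projective frame. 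This frame-nondegeneracy, not local invertibility of $f_1$, is what feeds into the FTPG step. Relatedly, your appeal to Lemma~\ref{lem_properites_aut_gr_n}(2) to conclude $D_Pf(x)=\tau^\eps\circ I_h$ is not available for $n=3$ (that lemma assumes $n\ge 4$, and the Heisenberg group has a much larger group of graded automorphisms); for $n=3$ one must instead use the constraint that $D_Pf(x)$ preserves the coset foliations of $K_1,K_2$ directly, which forces it to be diagonal on $V_1$ and hence of the form $I_h$.

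Finally, your propagation step is close in spirit to Lemma~\ref{lem_projective_propagation}, but ``line preservation forces $f_1(W_1)$ to lie on each of these two original lines'' requires an argument that local line preservation along a connected arc pins down a unique image line all the way to $V_1$; this is not automatic for a merely locally line-preserving continuous map and needs the kind of careful connectivity/limit argument the paper supplies (using fibers and bracket-generation on $\f$). The overall outline is recoverable, but the missing pieces — especially the local FTPG lemma — are the substance of the proof.
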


\bigskip
\subsection{Proof of Proposition~\ref{thm_fiber_preserving_projective} in the $n=3$ case}
\label{subsec_fibration_preserving_n_3_case}
In this subsection we fix $n=3$, and without further mention $\f$ and $\f_i$, and $\pi_i:\f\ra \f_i$ will be the objects associated with $\R^3$.

\begin{definition}
A {\bf projective frame} is an indexed collection $\{W_1^i\}_{0\leq i\leq 3}\subset \f_1$ where any three elements span $\R^3$.  The {\bf standard projective frame} $\{\hat W_1^i\}_{0\leq i\leq 3}$  is given by $\hat W_1^i=\Span(e_i)$ for $1\leq i\leq 3$, and $\hat W_1^0=\Span(e_1+e_2+e_3)$.
\end{definition}
Note that for any projective frame $\{W_1^i\}_{0\leq i\leq 3}\subset \f_1$ there exists a unique $g\in \pgl(3,\R)$ such that $g(\hat W_1^i)=\hat W_1^i$ for $0\leq i\leq 3$, i.e. $\pgl(3,\R)$ acts freely transitively on the collection of projective frames.

The proof of Proposition~\ref{thm_fiber_preserving_projective} is inspired by the following elementary classical result:
\begin{theorem}[Fundamental Theorem of Projective Geometry]~
\label{thm_fundamental_theorem_projective_geometry}
Every fibration-preserving bijection $f:\f\ra\f$  agrees with some $g\in \pgl(3,\R)$.
\end{theorem}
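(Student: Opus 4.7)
My plan is to reduce the statement to the classical Fundamental Theorem of Projective Geometry for $\mathbb{RP}^2$: any bijection of $\mathbb{RP}^2$ sending projective lines to projective lines is induced by an element of $\pgl(3,\R)$, using that $\R$ admits only the trivial field automorphism. The input $f$ naturally produces such a collineation $f_1$ on $\f_1=\mathbb{RP}^2$, and then $f$ is recovered from $f_1$ via the incidence structure.

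The first step is to extract induced bijections $f_j:\f_j\to\f_j$ satisfying $\pi_j\circ f=f_j\circ\pi_j$ for $j=1,2$. Fibration-preservation directly defines the maps $f_j$, and surjectivity of each $f_j$ is immediate from surjectivity of $f$. For injectivity of $f_1$, suppose $f_1(W_1)=f_1(W_1')$ with $W_1\neq W_1'$; set $\hat W_2:=W_1+W_1'$, which is a $2$-plane containing both lines. Then $(W_1,\hat W_2)$ and $(W_1',\hat W_2)$ are distinct flags both mapping to $(f_1(W_1),f_2(\hat W_2))$, contradicting injectivity of $f$. The dual argument, using $\hat W_1:=W_2\cap W_2'$ when $W_2\neq W_2'$ lie in $\f_2$, shows $f_2$ is injective. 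Next, I would check that $f_1$ sends projective lines in $\f_1$ to projective lines. A projective line in $\f_1$ is precisely a set of the form $L(W_2):=\pi_1(\pi_2^{-1}(W_2))=\{W_1\in\f_1:W_1\subset W_2\}$ for some $W_2\in\f_2$; since $f$ maps $\pi_2^{-1}(W_2)$ bijectively onto $\pi_2^{-1}(f_2(W_2))$, applying $\pi_1$ yields $f_1(L(W_2))=L(f_2(W_2))$, so $f_1$ is a collineation of $\mathbb{RP}^2$.

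Applying the classical FTPG to $f_1$ produces $g\in\pgl(3,\R)$ with $f_1=g|_{\f_1}$. Since $W_2\in\f_2$ is uniquely recovered from the projective line $L(W_2)$ and $L(f_2(W_2))=g(L(W_2))=L(g(W_2))$, we also get $f_2=g|_{\f_2}$; because a flag is determined by the pair of its line and plane, $f(W_1,W_2)=(g(W_1),g(W_2))=g\cdot(W_1,W_2)$, as desired. The main substantive content is the classical FTPG itself, which I would cite as a standard result. A self-contained proof would proceed by using harmonic-conjugate constructions, which are definable purely from the incidence structure of $\mathbb{RP}^2$, to reconstruct the field operations on each projective line from the action of $f_1$; the induced field automorphism of $\R$ must then be the identity, forcing $f_1$ to agree with a projectivity after normalization by an element of $\pgl(3,\R)$.
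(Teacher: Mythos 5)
Your proposal is correct and arrives at the same conclusion, but it takes a different organizational route from the paper. You reduce the statement to the classical Fundamental Theorem of Projective Geometry for $\mathbb{RP}^2$ (a line-preserving bijection is a projectivity over $\R$) and then invoke that theorem as a black box, adding a clean argument that $f_1,f_2$ are bijections and that $f$ is recovered from $f_1$ via the incidence structure. The paper instead reproves the FTPG from scratch inside the flag-manifold framework: after normalizing by $\pgl(3,\R)$ so that $f_1$ fixes the standard augmented projective frame, it passes to the affine chart $\R^2\subset\f_1$, observes that $\phi$ sends lines parallel to $e_1,e_2,e_1+e_2$ to lines parallel to the same vectors, and then uses the geometric-addition construction (Figure~1) to exhibit the induced field automorphism of $\R$, which is trivial. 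The point of the paper's more explicit treatment is not pedantry: the affine normal form in Lemma~\ref{lem_special_affine} and the propagation mechanism of Lemma~\ref{lem_projective_propagation} are exactly the tools reused in the local, non-bijective setting of Proposition~\ref{thm_fiber_preserving_projective}, which is the result actually needed in the sequel. Your citation-based argument is perfectly valid for the global bijective statement, but would not by itself localize; the paper's self-contained reproof is what allows the localization in the following subsection. Your explicit verification that $f_1,f_2$ are well-defined bijections (via the ``two flags through a common plane/line'' argument) fills in a step the paper passes over quickly and is a worthwhile addition.
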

Note that this is an equivalent reformulation of the classical result using the flag manifold rather the (traditional) projective plane.
\begin{proof}[Proof of Theorem~\ref{thm_fundamental_theorem_projective_geometry}]
Since  $f$ is a bijection, so are the maps  $f_i:\f_i\ra \f_i$ for $i\in \{1,2\}$.  It follows that $f_1$ maps $\pi_1(\pi_2^{-1}(W_2))$ bijectively to $\pi_1(\pi_2^{-1}(f_2(W_2)))$  for every plane $W_2\in\f_2$.  Therefore three elements of $\f_1$ lie in a plane if and only if their images under $f_1$ lie in a plane.  Hence $\{f_1(\hat W_1^i)\}_{0\leq i\leq 3}$ is a projective frame, so after composing $f$ with some element of $\pgl(3,\R)$, we may assume that $f_1(\hat W_1^i)=\hat W_1^i$ for $0\leq i\leq 3$.  It follows that $f_1$ also fixes $\hat W_1^4:=\hat W_2^{30}\cap \hat W_2^{12}$.  

We identify $\R^2$ with the affine plane $\{W_1\in \f_1\mid W_1\not\subset \hat W_2^{12}=\Span(e_1,e_2)\}$ by $(x_1,x_2)\leftrightarrow \Span(x_1,x_2,1)$.  We may define a bijection $\phi:\R^2\ra\R^2$ by $\Span(\phi(x_1,x_2),1)=f_1(\Span(x_1,x_2,1))$.  Using the fact that $f_1(\hat W_1^i)=\hat W_1^i$ for $0\leq i\leq 4$, one sees that:
\begin{enumerate}[label=(\alph*)]
\item $\phi$ fixes $(0,0)$, $(1,1)$.
\item $\phi$ maps lines bijectively to lines.
\item For every $v\in \{e_1,e_2,e_1+e_2\}$ and every line $L$ parallel to $v$,  the image $f(L)$ is a line parallel to $v$.
\end{enumerate}

Using a geometric construction  (see Figure~\ref{addition1})  it follows that the restriction of $\phi$ to  $\R\times\{0\}\simeq \R$ is a field isomorphism.  Since $\id_{\R}$ is the only automorphism of $\R$, it follows that $\phi$ fixes $\R\times\{0\}$.  Hence it also fixes $\{0\}\times\R$, then all of $\R^2$; and $f_1$ fixes $\f_1$, and $f$ fixes $\f$.

\begin{figure}
\centering
\includegraphics[width=90mm]{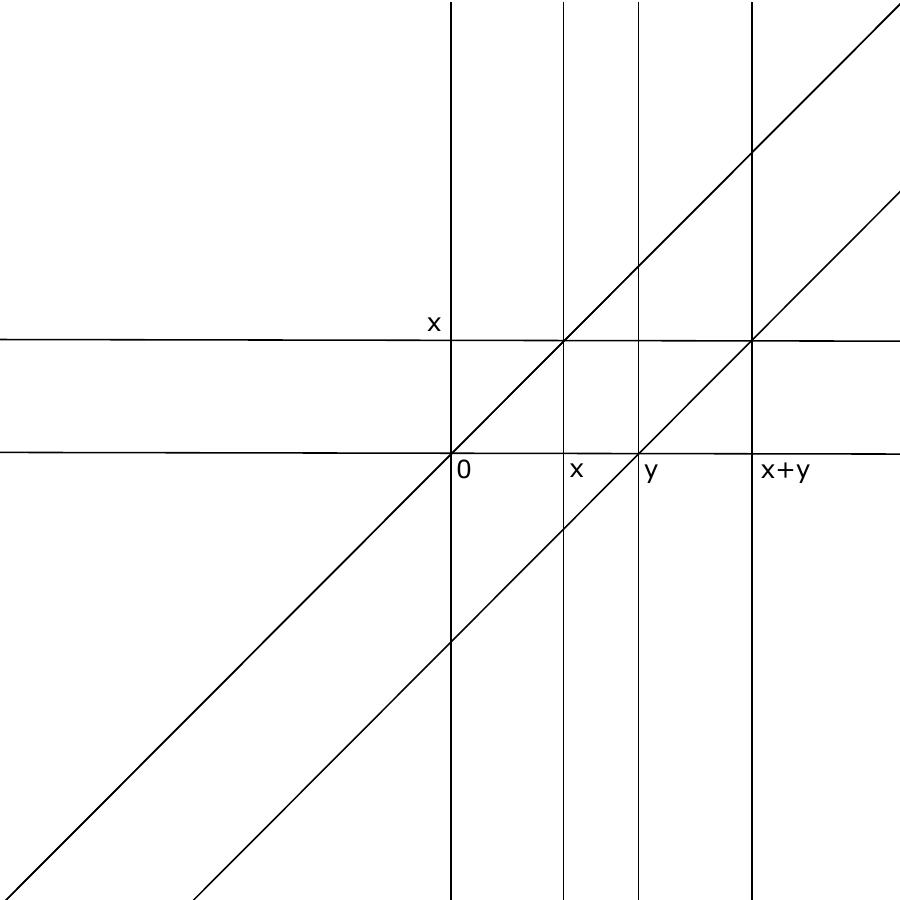}
\caption{ Geometric addition.
\label{addition1}}
\end{figure}

\end{proof}

\bigskip
The $n=3$ case of Proposition~\ref{thm_fiber_preserving_projective} may be viewed as a localized version of Theorem~\ref{thm_fundamental_theorem_projective_geometry} for mappings which are continuous, but not necessarily bijective. 

Before proceeding, we first give a rough idea how the argument goes.

The first step is to  show that local projectivity of $f$ propagates: if $f$ locally agrees with some $g\in \pgl(3,\R)$ near some point $x\in U$, then $f$ will locally agree with $g$ near points in the fibers of $\pi_1$, $\pi_2$ passing through $x$ (see Lemma~\ref{lem_projective_propagation}).  This readily implies that the subset of $U$ where $f$ agrees with $g$ is a connected component of $U$.  Thus we are reduced to finding a single point near which  $f$ is locally projective.  To that end, we consider $(W_1,W_2)\in U$ a point of differentiability where differential is an isomorphism.  Using the definition of differentiability, we argue that the map $f$ is nondegenerate near $(W_1,W_2)$, in the sense that the map $f_1$ induced on $\f_1$ carries a projective frame localized near $W_1$ to a projective frame (see below the claim in the proof of the $n=3$ case of Proposition~\ref{thm_fiber_preserving_projective}).  Then by pre/post-composing with suitable elements of $\pgl(3,\R)$, and working in $\f_1$, we are able to reduce to a map $\phi:\R^2\supset U\ra \R^2$ which satisfies a localized version of the conditions  (a)-(c) appearing in the proof of Theorem~\ref{thm_fundamental_theorem_projective_geometry}; this is shown in Lemma~\ref{lem_special_affine} to be locally affine, which then implies that $f$ is locally projective near $(W_1,W_2)$.

\bigskip
We now return to the preparations for the proof of Proposition~\ref{thm_fiber_preserving_projective}.

\bigskip
\begin{lemma}[Propagation of projectivity]
\label{lem_projective_propagation}
Suppose $U\subset \f$ is open and $f:U\ra \f$ is continuous and fibration preserving.
\ben
\item If $f$ agrees with $g\in \pgl(3,\R)$ near $(\bar W_1,\bar W_2)\in U$, then $f$ agrees with $g$ near the fiber $\pi_i^{-1}(\bar W_i)\cap U$, for $i\in \{1,2\}$.
\item For every $g\in \pgl(3,\R)$, the set where $f$ locally agrees with $g$ is a connected component of $U$.  \een
\end{lemma}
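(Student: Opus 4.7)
The approach is to exploit the projective incidence between lines and planes in $\R^3$ via the two fibrations $\pi_1, \pi_2$, and to propagate the condition $f = g$ along fibers.

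For part (1), I first reduce to the case $g = \id_\f$ by replacing $f$ with $g^{-1} \circ f$. The fibration-preserving hypothesis supplies well-defined continuous induced maps $f_j \colon \pi_j(U) \to \f_j$ with the factorization $f(W_1, W_2) = (f_1(W_1), f_2(W_2))$ for $(W_1, W_2) \in U$, and the hypothesis $f = \id_\f$ on the open set $V$ gives $f_j = \id_{\f_j}$ on the open set $V_j := \pi_j(V) \ni \bar W_j$. For the case $i = 1$ it then suffices to extend $f_2 = \id$ from $V_2$ to an open neighborhood $V_2'$ of $\{W_2 \in \f_2 : (\bar W_1, W_2) \in U\}$, since then $f = \id_\f$ holds on the open set $U \cap \pi_1^{-1}(V_1) \cap \pi_2^{-1}(V_2')$, which contains $\pi_1^{-1}(\bar W_1) \cap U$.

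The core incidence step is as follows. Fix $W_2$ with $(\bar W_1, W_2) \in U$, so $\bar W_1 \subset W_2$. Since $G(1, W_2)$ is a one-parameter family and $V_1 \cap G(1, W_2)$ is an open neighborhood of $\bar W_1$, by openness of $U$ in $\f$ we may pick two distinct lines $W_1^{(1)}, W_1^{(2)} \subset W_2$ lying in $V_1$ with $(W_1^{(j)}, W_2) \in U$. The incidence condition on the image flag $f(W_1^{(j)}, W_2)$, together with $f_1 = \id$ on $V_1$, forces $W_1^{(j)} \subset f_2(W_2)$; since two distinct lines in a plane determine their span, $f_2(W_2) = W_2$. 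Running the same argument at neighboring planes yields $f_2 = \id$ on a full open neighborhood of $W_2$ in $\f_2$, as required. The case $i = 2$ is dual, using the pencil of planes through a fixed line instead of the pencil of lines in a fixed plane.

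For part (2), let $S_g \subset U$ denote the set where $f$ locally agrees with $g$; it is open by definition. Part (1) shows that if $p \in S_g$ then both $\pi_i$-fibers through $p$ in $U$ lie in $S_g$. In the Iwasawa chart $\al \colon N \to \hat N$, these fibers correspond to cosets of $K_j = \exp(\R X_{j,j+1})$; the bracket relation $[X_{12}, X_{23}] = X_{13}$ shows that $\{X_{12}, X_{23}\}$ bracket-generates $\fn$, and an explicit four-segment commutator path, obtained by concatenating alternating $K_1$- and $K_2$-coset segments of parameters $s, t, -s, -t$, produces by Baker--Campbell--Hausdorff a displacement of the form $\exp(st X_{13} + O(s^2 t, st^2))$, i.e. motion in the $X_{13}$-direction. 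Consequently the fiber-accessibility relation on $U$ has open equivalence classes, which must then coincide with the connected components of $U$; therefore $S_g$ is a union of connected components.

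The main obstacle is the incidence step in part (1): the plane $W_2$ may lie arbitrarily far from $\bar W_2$ along the fiber $\pi_1^{-1}(\bar W_1)$, so continuity near $(\bar W_1, \bar W_2)$ alone is insufficient; one must coordinate the openness of $V_1 \subset \f_1$, of $U \subset \f$, and of the pencil of lines in $W_2$ near $\bar W_1$ in order to produce two distinct admissible $W_1^{(j)}$. Once these are secured, the projective rigidity $f_2(W_2) = W_2$ is immediate.
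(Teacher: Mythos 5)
Your proof is correct and takes essentially the same approach as the paper: part (1) is the dual of the paper's argument (you fix a plane $W_2$ along the $\pi_1$-fiber and use two distinct lines of $V_1$ inside it, while the paper handles the $i=2$ case by fixing a line and using two planes through it), and part (2) uses part (1) together with bracket generation of $\fn$ by $X_{12},X_{23}$ to show the fiber-accessibility classes are open, exactly as the paper does via Chow--Rashevskii.
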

\begin{proof}
(1).  We prove the assertion for $i=2$; the case when $i=1$ is similar.  

After postcomposing with an element of $\pgl(3,\R)$, may assume without loss of generality that $f\equiv\id$ on an open set $V\subset U$ with $(\bar W_1,\bar W_2)\in V$.  

Pick $(W_1,W_2)\in U\cap \pi_2^{-1}(\bar W_2)$, i.e. $W_2=\bar W_2$.  Choose $(W_1,W_2')\in U\setminus \{(W_1,W_2)\}$ such that $\pi_2^{-1}(W_2')\cap V\neq \emptyset$.  

Now suppose $(W_1^j,W_2^j)\ra (W_1,W_2)$.  We may choose $\{W_2'^j\}\subset \f_2$ such that  $W_1^j\subset W_2'^j$ and $W_2'^j\ra W_2'$ as $j\ra\infty$.   Since $V$ is open, after dropping finitely many terms, we may assume that $\pi_2^{-1}(W_2^j)\cap V\neq \emptyset$ and $\pi^{-1}(W_2'^j)\cap V\neq\emptyset$.  Since $f$ is fibration-preserving and $f\equiv\id$ on $V$, it follows that $f(\pi_2^{-1}(W_2^j)\cap U)\subset \pi_2^{-1}(W_2^j)$ and $f(\pi^{-1}(W_2'^j)\cap U)\subset \pi^{-1}(W_2'^j)$. The map $f$ is fibration preserving, so $f(W_1^j,W_2^j), f(W_1^j,W_2'^j)\in \pi_1^{-1}(f_1(W_1^j))$.  Therefore $\pi_1^{-1}(f_1(W_1^j))$ intersects both $\pi_2^{-1}(W_2^j)$ and $\pi_2^{-1}(W_2'^j)$ nontrivially, forcing $f_1(W_1^j)=W_2^j\cap W_2'^j=W_1^j$.  Thus $f(W_1^j,W_2^j)=(W_1^j,W_2^j)$.  Since the sequence $\{(W_1^j,W_2^j)\}$ was arbitrary, this proves (1).

(2).   Define an equivalence relation on $U$ where $x,x'\in U$ are equivalent if there is a path $\ga:[0,1]\ra U$ from $x$ to $x'$ which is piecewise contained in a fiber of one of the fibrations $\pi_i:\f\ra \f_i$.  We claim that the equivalences classes are open subsets of $U$.  To see this, pick $x=g\cdot (W_1^-,W_2^-)\in U$, and note that the vector fields $X_{1,2}$, $X_{2,3}$ on $N$ are bracket generating and tangent to the cosets of $K_1$ and $K_2$, respectively;  and hence their pushforwards $X_{1,2}'$, $X_{2,3}'$ under the composition $N\stackrel{\al}{\ra}\hat N\stackrel{g}{\ra}g\cdot\hat N$ are bracket generating vector fields on $g\cdot\hat N$ which are tangent to the fibers of $\pi_1$ and $\pi_2$ respectively, by Lemma~\ref{lem_properties_fibration_between_flag_manifolds}(4).  By \cite[pp.50-52]{montgomery_book} 
 the equivalence class of $x$ contains a neighborhood of $x$.   It follows that the equivalence classes are connected components of $U$.  Since the set where $f$ locally agrees with $g$ is a union of equivalences classes by (1), assertion (2) follows.

\end{proof}

\begin{lemma}
\label{lem_special_affine}
Suppose $U\subset \R^2$ is a connected open subset, and $$\phi =(\phi_1,\phi_2):\R^2\supset U\ra\R^2$$ is a continuous map such for every $v\in \{e_1,e_2,e_1+e_2\}$ and every line $L$ parallel to $v$, the image of $L\cap U$ is contained in a line parallel to $v$.
Then $\phi$ is  of the form 
\begin{equation}
\label{eqn_special_affine}
\phi(x,y)=(mx+b_1,my+b_2)
\end{equation}
for some $m,b_1,b_2\in \R$.

\end{lemma}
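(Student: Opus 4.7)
The plan is to reduce the problem to Cauchy's functional equation by exploiting the three directional constraints in turn.

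First, I would use preservation of horizontal and vertical lines. Since $\phi$ sends each vertical line $\{x\}\times \R$ intersected with $U$ into a single vertical line, the first coordinate of $\phi$ depends only on $x$; symmetrically, the second coordinate depends only on $y$. Thus I get continuous one-variable maps $\phi_1 : \pi_1(U) \to \R$ and $\phi_2 : \pi_2(U) \to \R$, defined on open intervals (since $U$ is connected, and therefore so are its coordinate projections).

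Next, I would bring in the diagonal direction $e_1+e_2$. Preservation of each line $\{y-x=c\}$ says that on its intersection with $U$ the quantity $\phi_2(y)-\phi_1(x)$ is constant, depending only on $c$. Hence there is a continuous function $h$ with
\[
\phi_2(y)-\phi_1(x) \;=\; h(y-x)\qquad\text{for all }(x,y)\in U.
\]
Pick a base point $(x_0,y_0)\in U$ and a small open ball $B\subset U$ about it. Setting $x=x_0$ in $B$ gives $\phi_2(y)=h(y-x_0)+\phi_1(x_0)$, and setting $y=y_0$ gives $\phi_1(x)=\phi_2(y_0)-h(y_0-x)$. Substituting both back into the displayed identity and translating variables by $c:=y_0-x_0$ via $k(u):=h(u+c)-h(c)$, after a short calculation I obtain Cauchy's equation
\[
k(s)+k(t)\;=\;k(s+t)
\]
valid for $s,t$ in a neighborhood of $0$.

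Finally, continuity of $\phi$ transfers to $h$, hence to $k$, and the only continuous solutions of Cauchy's equation on an open interval about $0$ are linear, $k(t)=mt$. Unwinding the substitutions yields $h(t)=mt+a$ locally, whence $\phi_1(x)=mx+b_1$ and $\phi_2(y)=my+b_2$ in a neighborhood of $(x_0,y_0)$, which is the desired form \eqref{eqn_special_affine}. To globalize, I note that the slope $m$ and the two intercepts obtained from overlapping balls must agree on overlaps, and since $\pi_1(U)$ and $\pi_2(U)$ are connected open intervals, a standard chaining argument along a path in $U$ propagates the same $m,b_1,b_2$ to all of $U$. The main delicacy I anticipate is purely domain bookkeeping: the identity $\phi_2(y)-\phi_1(x)=h(y-x)$ is only asserted for $(x,y)\in U$, so the substitutions $x=x_0$ and $y=y_0$ must be confined to the ball $B$; working at a base point and then propagating by connectedness sidesteps this cleanly.
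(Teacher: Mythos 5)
Your proof is correct, and its overall structure matches the paper's second (``alternative'') argument for this lemma: extract a Cauchy additivity equation, solve it using continuity, deduce the local affine form, and globalize by connectedness of $U$. The difference is in how the additivity is obtained. The paper invokes the classical geometric addition construction (the one depicted in Figure~\ref{addition1}) to produce $\phi(x+x',0)=\phi(x,0)+\phi(x',0)$ after normalizing $\phi(0,0)=(0,0)$; you instead package the three line constraints algebraically --- $\phi_1$ depends only on $x$, $\phi_2$ only on $y$, and $\phi_2(y)-\phi_1(x)=h(y-x)$ for a continuous $h$ --- and then obtain Cauchy's equation for $k(u)=h(u+c)-h(c)$ by direct substitution. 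Your derivation is a touch cleaner in that the local domain bookkeeping (confining everything to a ball about the base point) is made explicit rather than left implicit in a geometric picture. The paper's first argument is a genuinely different route you did not take: assume $\phi$ is smooth, read off $\partial_2\phi_1=\partial_1\phi_2\equiv 0$ and then $\partial_1\phi_1=\partial_2\phi_2=\mathrm{const}$ from the diagonal constraint, and handle the general continuous case by mollification, using that the hypotheses are stable under convolution; this is shorter but requires the mollification step, which your approach avoids entirely.
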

\begin{proof}
First assume that $\phi$ is smooth.  
Applying the hypothesis with $v\in \{e_1,e_2\}$ implies that $\D_2\phi_1=\D_1\phi_2\equiv 0$, so $\phi_i$ depends only on $x_i$.  
Applying the hypothesis when $v=e_1+e_2$ we have
$$
0\equiv (\D_1+\D_2)(\phi_2-\phi_1)=\D_2\phi_2-\D_1\phi_1\,.
$$
But since $\D_i\phi_i$ depends only on $x_i$ this forces  $\D_1\phi_1=\D_2\phi_2=const$ and so \eqref{eqn_special_affine} holds.  Since the conditions are preserved by taking linear combinations and precomposing with  translations, the general case follows by mollification.

Alternatively, one may argue as follows.  Without loss of generality, one may assume that $\phi(0,0)=(0,0)$.  By geometric contruction, $\phi(x+x',0)=\phi(x,0)+\phi(x',0)$ when $x,x'\in (-r,r)$ for $r$ small.    Hence $\phi(x_1,0)=mx_1$ for some $m\in\R$, for $x_1\in (-r,r)$.  Invoking the hypotheses again, we get $\phi(x_1,x_2)=(mx_1,mx_2)$.  Thus the lemma holds locally, i.e.  for every $(x_1,x_2)\in U$ there is an open set $V_{x_1,x_2}$ containing $(x_1,x_2)$ and $m,b_1,b_2$ depending  on $(x_1,x_2)$ such that \eqref{eqn_special_affine} holds in $V_{x_1,x_2}$; since $U$ is connected and $m,b_1,b_2$ are locally constant, it follows that they are independent of $x_1,x_2$, and so \eqref{eqn_special_affine} holds.
\end{proof}

\bigskip\bigskip
\begin{definition}
An indexed tuple $\{W_1^i\}_{0\leq i\leq 4}\subset\f_1$ is an {\bf augmented projective frame} if $\{W_1^i\}_{0\leq i\leq 3}$ is a projective frame and 
$$
W_1^4=\Span(W_1^3,W_1^0)\cap\Span(W_1^1,W_1^2)\,.
$$   
The {\bf standard augmented projective frame} $\{\hat W_1^i\}_{0\leq i\leq 4}$ is given by 
\begin{equation}
\hat W_1^i=
\begin{cases}
e_1+e_2+e_3,\quad &i=0\\
e_i,\quad &1\leq i\leq 3\\
e_1+e_2,\quad &i=4
\end{cases}
\end{equation}
\end{definition}

Given a subset $\Si\subset \f_1$, we obtain (possibly empty) subsets of $\f_2$ and $\f$:
\begin{equation}
\label{eqn_f_sigma}
\begin{aligned}
\f_2(\Si)&:=\{\Span(\si_1,\si_2)\mid\;  \si_i\in\Si\,,\;\si_1\neq \si_2\}\,.\\
\f(\Si)&:=\{(W_1,W_2)\mid W_1\in\Si\,,\;W_2\in \f_2(\Si)\}\,.
\end{aligned}
\end{equation}

\begin{lemma}
\label{lem_small_projective_frame}
There is an augmented projective frame $\{\tilde W_1^i\}_{0\leq i\leq 4}\subset \f_1$ such that $\f(\{\tilde W_1^i\}_{0\leq i\leq 4})\subset \hat N$ and 
$$
(\tilde W_1^3,\Span(\tilde W_1^3,\tilde W_1^2))=(W_1^-,W_2^-)\,.
$$
\end{lemma}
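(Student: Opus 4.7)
The plan is to give an explicit construction, reducing the lemma to a finite list of elementary incidence checks. By Lemma~\ref{lem_properties_of_f}(\ref{item_orbit_map_n}), a flag $(W_1, W_2)\in\f$ lies in $\hat N$ iff $W_1\cap W_2^+=\{0\}$ and $W_2\cap W_1^+=\{0\}$. Since $W_1^+=\Span(e_1)$ and $W_2^+=\Span(e_1,e_2)$, the containment $\f(\{\tilde W_1^i\}_{0\leq i\leq 4})\subset \hat N$ is equivalent to (a) every $\tilde W_1^i$ has nonzero $e_3$-component, and (b) for every pair $i\neq j$, the plane $\Span(\tilde W_1^i,\tilde W_1^j)$ does not contain $e_1$.

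The normalization $(\tilde W_1^3,\Span(\tilde W_1^3,\tilde W_1^2))=(W_1^-,W_2^-)$ forces $\tilde W_1^3=\Span(e_3)$ and $\tilde W_1^2\subset W_2^-=\Span(e_2,e_3)$ with $\tilde W_1^2\neq \tilde W_1^3$. To also satisfy (a) for $\tilde W_1^2$, one must avoid the additional line $W_2^-\cap W_2^+=\Span(e_2)$, so any $\tilde W_1^2=\Span(e_2+ae_3)$ with $a\neq 0$ is admissible; I would take $a=1$. With $\tilde W_1^2,\tilde W_1^3$ fixed, the conditions on $(\tilde W_1^0,\tilde W_1^1)$ (projective frame; (a) and (b) for $i,j\in\{0,1\}$; nondegeneracy of the intersection defining $\tilde W_1^4$; and conditions (a), (b) involving $\tilde W_1^4$) all cut out an open subset of $\f_1\times\f_1$, so it suffices to exhibit one admissible pair.

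I would take the explicit choice
\[
\tilde W_1^0=\Span(e_1+e_2+3e_3),\quad \tilde W_1^1=\Span(e_1+2e_2+3e_3),\quad \tilde W_1^2=\Span(e_2+e_3),\quad \tilde W_1^3=\Span(e_3),
\]
and then compute $\tilde W_1^4=\Span(\tilde W_1^3,\tilde W_1^0)\cap\Span(\tilde W_1^1,\tilde W_1^2)$. Writing $\Span(\tilde W_1^3,\tilde W_1^0)=\Span(e_3,e_1+e_2)$ and solving the linear system gives $\tilde W_1^4=\Span(e_1+e_2+2e_3)$. The projective frame property reduces to the nonvanishing of four $3\times3$ determinants, each equal to $\pm 1$; the nonzero $e_3$-component condition (a) is immediate from the representative vectors $(\cdot,\cdot,3), (\cdot,\cdot,3),(\cdot,\cdot,1),(\cdot,\cdot,1),(\cdot,\cdot,2)$; and the ten incidence conditions in (b) each reduce to an explicit linear solvability check of the form ``does the system for $\alpha e_1=\alpha_1 v_i+\alpha_2 v_j$ have a solution'' and are easily dispatched.

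The proof has no serious obstacle: the content is entirely the reformulation of (a) and (b) and the observation that the standard augmented projective frame $\hat W_1^0=e_1+e_2+e_3,\hat W_1^1=e_1,\hat W_1^2=e_2,\hat W_1^3=e_3$ lies precisely on the boundary of the valid locus (both $\hat W_1^1$ and $\hat W_1^2$ lie in $W_2^+$, so violate (a)), which is why one must perturb slightly into the open interior to obtain a valid tuple.
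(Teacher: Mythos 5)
Your proof is correct, and it takes a genuinely different route from the paper. The paper's proof is a short genericity argument: since $\hat N$ is open and dense in $\f$ and $\f(\{\hat W_1^i\}_{0\leq i\leq 4})$ is a finite set, a generic $g\in G$ pushes this finite set into $\hat N$; one then uses the simply transitive action of $N$ on $\hat N$ to achieve the normalization $(\tilde W_1^3,\Span(\tilde W_1^3,\tilde W_1^2))=(W_1^-,W_2^-)$, taking $\tilde W_1^i:=(ng)\cdot\hat W_1^i$. Your proof replaces this soft argument with an explicit one: you correctly translate membership in $\hat N$ into the two incidence conditions (a) and (b), and then exhibit concrete representative vectors. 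I checked the computations: the intersection giving $\tilde W_1^4=\Span(e_1+e_2+2e_3)$ is correct, the four frame determinants all equal $1$, the $e_3$-components $(3,3,1,1,2)$ are nonzero, and all ten determinants $\det(e_1,v_i,v_j)$ are nonzero (values $-3,-2,1,-1,-1,2,1,1,1,-1$). The paper's approach is shorter and transfers verbatim to the $n\ge 4$ analogue; your approach buys concreteness and elementary verifiability at the cost of having to produce new explicit vectors in each dimension. The closing remark about the standard frame lying "on the boundary of the valid locus" is imprecise -- the correct observation is simply that $\hat W_1^1$ and $\hat W_1^2$ lie in $W_2^+$ and hence violate (a) -- but this aside does not affect the argument.
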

\begin{proof}
Since $\hat N$ is an open dense subset of $\f$ by Lemma~\ref{lem_properties_of_f}, we may choose $g\in G$ such that $$g\cdot\f(\{\hat W_1^i\}_{0\leq i\leq 4})=\f(\{g\cdot\hat W_1^i\}_{0\leq i\leq 4})$$ lies in $\hat N$.   Then we may let $\tilde W_1^i:=(ng)\cdot \hat W_1^i$ for $0\leq i\leq 4$, where $n\in N$ satisfies $n\cdot (g\cdot(\hat W_1^3,\Span(\hat W_1^3,\hat W_1^2)))=(W_1^-,W_2^-)$.  
\end{proof}

\bigskip
\begin{proof}[Proof of Proposition~\ref{thm_fiber_preserving_projective} in the $n=3$ case.]
By Lemma~\ref{lem_projective_propagation}(2) and the connectedness of $U$, it suffices to show that $f$ locally agrees with some element of $\pgl(3,\R)$ near $x$.  Therefore after shrinking $U$ we may assume that $f$ is fibration-preserving, not just locally fibration-preserving.

After pre/postcomposing with elements of $\pgl(3,\R)$, we assume without loss of generality that $x=(W_1^-,W_2^-)\in U$, $f(x)=x$, and $D_Pf(x)$ is defined and an isomorphism. 

Let $\{\tilde W_1^i\}_{0\leq i\leq 4}\subset\hat N$ be the augmented projective frame from Lemma~\ref{lem_small_projective_frame}.  For $r>0$ let $\{W_1^i\}_{0\leq i\leq 4}$  be the image of $\{\tilde W_1^i\}_{0\leq i\leq 4}$ under $\hat\de_r$; we may assume $r$ is small enough that $\f(\{W_1^i\}_{0\leq i\leq 4})\in U$.

\begin{claim}
If $r$ is sufficiently small, then $\{f_1(W_1^i)\}_{0\leq i\leq 3}$ is a projective frame.
\end{claim}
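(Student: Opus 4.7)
The plan is to use Pansu differentiability at $x$ to reduce the claim to a limiting projective-frame assertion in $\mathcal{F}_1$; the crucial point is that fibration preservation together with invertibility of $D_Pf(x)$ rigidifies the Pansu differential into a conjugation by a diagonal element of $G$.

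First, I would show that $L := D_Pf(x)$ preserves both $\mathfrak{k}_1 = \R X_{1,2}$ and $\mathfrak{k}_2 = \R X_{2,3}$. Since $f$ is locally fibration-preserving for $\pi_j$ and $f(x)=x$ gives $f_j(W_j^-)=W_j^-$, in the chart $\alpha:N\to \hat N$ the map $f$ sends a neighborhood of $e$ in $K_j$ into $K_j$. Because $K_j=\exp(\mathfrak{k}_j)$ is $\delta_r$-invariant, for any $y\in\mathfrak{k}_j$ and all small $r>0$ we have $\delta_r^{-1}f(\delta_r y)\in K_j$, and the Pansu limit satisfies $L(y)\in K_j$ by closedness. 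Hence $L(\mathfrak{k}_j)\subseteq \mathfrak{k}_j$, with equality since $L$ is an isomorphism. Writing $L(X_{1,2})=aX_{1,2}$ and $L(X_{2,3})=cX_{2,3}$ (with $a,c\neq 0$), the bracket identity forces $L(X_{1,3})=ac\,X_{1,3}$, so $L$ coincides with the inner automorphism $I_g$ of $N$ for $g := \diag(ac,c,1) \in G$.

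Next, I would push the Pansu convergence down to $\mathcal{F}_1$. For each $i\in\{0,1,2,3\}$, pick a flag extension $(\tilde W_1^i,\tilde W_2^{i*}) \in \mathcal{F}(\{\tilde W_1^j\}_{0\le j\le 4}) \subset \hat N$, and set $\tilde y^i := \alpha^{-1}(\tilde W_1^i,\tilde W_2^{i*}) \in N$. Combining the Pansu convergence $\delta_r^{-1}f(\delta_r \tilde y^i) \to L(\tilde y^i) = I_g(\tilde y^i)$ with Lemma~\ref{lem_properties_of_f}(\ref{item_delta_hat_delta}) (to intertwine $\hat\delta_r$ with $\delta_r$) and the $G$-equivariance of $\pi_1$ (which gives $\pi_1 \circ \hat\delta_r = \hat\delta_r \circ \pi_1$), I obtain, as $r\to 0$,
\[
\hat\delta_r^{-1}\bigl(f_1(W_1^i)\bigr) \;\longrightarrow\; g \cdot \tilde W_1^i \quad\text{in } \mathcal{F}_1,
\]
where the right-hand side uses that $g\in P^-$ stabilizes $(W_j^-)$, so $\alpha(I_g \tilde y^i) = g \cdot \alpha(\tilde y^i)$ and hence $\pi_1\alpha(I_g \tilde y^i) = g \cdot \tilde W_1^i$.

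Since $g$ acts on $\mathcal{F}_1 \cong \mathbb{RP}^2$ as a projective transformation, the tuple $\{g \cdot \tilde W_1^i\}_{0\le i\le 3}$ remains a projective frame. Being a projective frame is an open condition on $(\mathcal{F}_1)^4$, so for all sufficiently small $r$ the configuration $\{\hat\delta_r^{-1}(f_1(W_1^i))\}$ is a projective frame; since $\hat\delta_r$ itself acts projectively on $\mathcal{F}_1$, the tuple $\{f_1(W_1^i)\}_{0\le i\le 3}$ is then a projective frame as well. I expect the technical heart of the argument to lie in the first step: verifying invariance of $\mathfrak{k}_1$ and $\mathfrak{k}_2$ under $L$ from the weak (merely continuous) fibration-preservation hypothesis — once this is in hand, the rest is largely formal manipulation of Pansu differentials and $G$-equivariance of $\pi_1$.
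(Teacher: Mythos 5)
Your proposal is correct and follows essentially the same route as the paper: identify $D_Pf(x)$ as conjugation by a diagonal element of $\pgl(3,\R)$, push the Pansu convergence $\hat\de_{r^{-1}}\circ f\circ\hat\de_r\to D_Pf(x)$ down to $\f_1$ via $G$-equivariance of $\pi_1$, and conclude from openness of the projective-frame condition in $(\f_1)^4$. The only real variation is in the first step, where you derive preservation of $\fk_1,\fk_2$ directly from the Pansu limit of $f$ restricted to $K_j$-cosets rather than citing Lemma~\ref{lem_properites_aut_gr_n}(3); since that lemma is stated for $n\geq 4$ while this argument is in the $n=3$ case, your self-contained derivation is in fact a bit tidier than the paper's citation.
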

\begin{proof}~
Since  $f$ is Pansu differentiable at $x$  and $D_Pf(x)$ is an isomorphism
\begin{equation}
\label{eqn_pansu_differential_f}
\hat \de_{r^{-1}}\circ f\circ \hat \de_r\ra D_Pf(x)
\end{equation}
uniformly on compact sets as $r\ra 0$, where $D_Pf:\hat N\ra \hat N$ is a graded automorphism; here we identify $N$ with $\hat N$.  
Note that  $D_Pf(x):\hat N\ra \hat N$ is fibration-preserving because it is a limit of fibration-preserving maps.  By Lemma~\ref{lem_properites_aut_gr_n}(3) there exist $\la_1,\la_2,\la_3\neq 0$ such that $\hat\Phi=\diag(\la_1,\la_2,\la_3) \in \pgl(3,\R)$  agrees with $D_Pf(x)$ on $\hat N$.  Therefore $(\hat\Phi)_1$ maps $\{\tilde W_1^i\}_{0\leq i\leq 3}$ to a projective frame.
It follows that for $r$ small  both  $\{(\hat \de_{r^{-1}}\circ f\circ \hat \de_r)_1(\tilde W_1^i)\}_{0\leq i\leq 3}$ and  $\{f_1(W_1^i)=(f\circ\hat\de_r)_1(\tilde W_1^i)\}_{0\leq i\leq 3}$ are projective frames.  
\end{proof}

\bigskip
Let $\{\hat W_1^i\}_{0\leq i\leq 4}$ be the standard augmented projective frame, and let $\hat W_2^{ij}:=\Span(\hat W_1^i,\hat W_1^j)$ for $0\leq i\neq j\leq 4$.
Since $\{W_1^i\}_{0\leq i\leq 3}$ and $\{f_1(W_1^i)\}_{0\leq i\leq 3}$ are both projective frames, there are elements $g_1,g_2\in \pgl(3,\R)$ such that $g_1(\hat W_1^i)=W_1^i$ and $g_2(f_1(W_1^i))=\hat W_1^i$ for $0\leq i\leq 3$.  We let $\hat U:=g_1^{-1}(U)\subset \f$ and $\hat f:=g_2\circ f\circ g_1:\hat U\ra \f$. 
    Then $\f(\{\hat W_1^i\}_{0\leq i\leq 4})\subset \hat U$, the map $\hat f$ is fibration-preserving, and  $(\hat f)_1(\hat W_1^i)=\hat W_1^i$ for all $0\leq i\leq 3$.  Since two distinct lines lie in a unique plane, and  two distinct planes intersect in a line, the fact that $\hat f$ is fibration-preserving implies that:
\ben
\item $\hat f_2(\hat W_2^{ij})=\hat W_2^{ij}$ for $0\leq i\neq j\leq 3$. 
\item $\hat f_1(\hat W_1^4)=(\hat f)_1(\hat W_2^{30}\cap \hat W_2^{12})=\hat W_1^4$.
\item $\hat f_2(\hat W_2^{ij})=\hat W_2^{ij}$ for $0\leq i\neq j\leq 4$.
\item $\hat f(\hat W_1^i,\hat W_2^{ij})=(\hat W_1^i,\hat W_2^{ij})$ for $0\leq i\neq j\leq 4$.
\een

Since $\hat f_1([e_3])=[e_3]$, 
for small $r>0$ we may define $\phi:\R^2\supset B(0,r)\ra \R^2$ by $\Span(\phi(x_1,x_2),1)=\hat f_1(\Span(x_1,x_2,1))$.  
  The map $\hat f$ is fibration-preserving and it fixes the standard augmented projective frame,  so the hypotheses of Lemma~\ref{lem_special_affine} hold for $\phi$.  Applying Lemma~\ref{lem_special_affine} to $\phi$, we get that for some $m\in \R$ we have $\hat f_1([x_1,x_2,1])=[mx_1,mx_2,1]$ for $x_1,x_2\in \R$ small.

Suppose $m=0$.  Then $\hat f$, and hence also $f$,  takes values in a single fiber of $\pi_1$ near $x=(W_1^-,W_2^-)$.  It follows that the Pansu differential $D_Pf(x)=\lim_{r\ra 0}\hat \de_{r^{-1}}\circ f\circ\de_r$ takes values in a single fiber of $\pi_1$; this contradicts the nondegeneracy of the Pansu differential.   Hence $m\neq 0$.

Let $\hat g:=\diag(m,m,1)\in\pgl(3,\R)$.  Then $\hat g_1:\f_1\ra\f_1$ agrees with $\hat f_1$ near $[e_3]$.  Since $\hat g$ and $\hat f$ are both fibration-preserving, they agree near $x=(W_1^-,W_2^-)$. 
\end{proof}

\bigskip\bigskip
\subsection{Proof of Proposition~\ref{thm_fiber_preserving_projective}, general case}
The $n\geq 4$ case is similar to the $n=3$ case.  The replacement for Lemma~\ref{lem_special_affine} is:

\begin{lemma}
\label{lem_special_affine_n_geq_4}
Suppose $V\subset \R^n$ is a connected open subset, and $$\phi=(\phi_1,\ldots,\phi_n):V\ra\R^n$$ is a continuous map such for every $v\in \{e_1,\ldots,e_n,e_1+\ldots+e_n\}$ and every line $L$ parallel to $v$, the image $\phi(L\cap V)$ is contained in a line parallel to $v$.
 Then $\phi$ is  of the form 
\begin{equation}
\label{eqn_special_affine_general_n}
\phi(x_1,\ldots,x_n)=(mx_1+b_1,\ldots,mx_n+b_n)
\end{equation}
for some $m,b_1,\ldots,b_n\in \R$.
\end{lemma}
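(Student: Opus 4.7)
The plan is to adapt the $n=2$ proof of Lemma~\ref{lem_special_affine} almost verbatim, first treating smooth $\phi$ and then reducing the general continuous case to the smooth one by mollification.

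For the smooth case, I would apply the hypothesis with $v=e_i$ for each $1\leq i\leq n$: along a line parallel to $e_i$, only the $i$th coordinate of $\phi$ may vary, so $\D_i\phi_j\equiv 0$ for every $j\neq i$. Consequently each $\phi_j$ is a function of $x_j$ alone. Next I would apply the hypothesis with $v=e_1+\cdots+e_n$: the image of every line in this direction lies in a line parallel to $v$, equivalently $\phi_j-\phi_k$ is constant along such lines, which gives $(\D_1+\cdots+\D_n)(\phi_j-\phi_k)\equiv 0$ for all $j,k$. Combined with the previous step this collapses to $\phi_j'(x_j)=\phi_k'(x_k)$ for all $j,k$. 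Since the two sides depend on disjoint variables, each must equal a common constant $m$, whence $\phi_j(x_j)=mx_j+b_j$.

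For a general continuous $\phi$, I would verify that the line-preserving hypothesis is stable under mollification. If $\eta_\epsilon$ is a standard mollifier and $x,y$ lie on a line $L$ parallel to $v$, with $\dist(x,\D V),\dist(y,\D V)>\epsilon$, then
\[
\phi_\epsilon(y)-\phi_\epsilon(x)=\int\bigl(\phi(y-z)-\phi(x-z)\bigr)\,\eta_\epsilon(z)\,dz,
\]
and each integrand is parallel to $v$ because $x-z,y-z$ lie on a translate of $L$. Thus $\phi_\epsilon$ satisfies the same hypothesis on $V_\epsilon:=\{x\in V\mid \dist(x,\D V)>\epsilon\}$, and the smooth case gives $\phi_\epsilon(x)=m_\epsilon x+b_\epsilon$ on each connected component of $V_\epsilon$. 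Letting $\epsilon\to 0$ and using $\phi_\epsilon\to\phi$ uniformly on compact subsets, one obtains $\phi(x)=mx+b$ locally on $V$, with $m$ and $b_1,\ldots,b_n$ locally constant; connectedness of $V$ then makes them globally constant.

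I expect no real obstacle: the main care needed is simply to check that mollification respects the hypothesis (immediate since $\phi_\epsilon$ is a convex combination of translates of $\phi$, and the condition is preserved under translations and linear combinations), and to note that the constants on overlapping pieces must agree by continuity of $\phi$. Alternatively, following the second proof of the $n=2$ case, one can proceed purely geometrically: after translating so that $\phi(0)=0$, a ``parallelogram'' construction in the directions $\{e_i, e_1+\cdots+e_n\}$ shows $\phi|_{\R e_1}$ is additive near $0$, hence equal to $t\mapsto mt$ for some $m\in\R$; the $e_i$-line hypothesis then propagates this to $\phi(x)=mx$ locally, and connectedness finishes the argument.
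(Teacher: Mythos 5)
Your proposal is correct and follows the same route the paper takes: it is a straightforward higher-dimensional adaptation of the proof of Lemma~\ref{lem_special_affine}, first handling the smooth case via partial derivatives in the coordinate and diagonal directions, then reducing to it by mollification (with the alternative geometric argument also mirroring the paper's second proof). The paper simply states ``We omit the proof as it is similar to the proof of Lemma~\ref{lem_special_affine}''; your write-out, in particular the verification that the line-preserving hypothesis passes to mollifications, is a correct expansion of exactly that intended argument.
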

We omit the proof as it is similar to the proof of Lemma~\ref{lem_special_affine}.

The $n\geq 4$ version of Lemma~\ref{lem_projective_propagation} is:
\begin{lemma}
\label{lem_projective_propagation_n_geq_4}
Suppose $U\subset \f$ is open and $f:U\ra \f$ is fibration preserving.
\ben
\item Suppose $f$ agrees with $g\in \pgl(n,\R)$ near $(\bar W_1,\ldots,\bar W_{n-1})\in U$.  For $i\in \{1, \cdots, n-1\}$, let $V_i$ be the connected component of $\pi_i^{-1}(\bar W_i)\cap U$ containing $(\bar W_1,\ldots,\bar W_{n-1})$.   Then $f$ agrees with $g$ near $V_i$.
\item For every $g\in \pgl(n,\R)$, the set where $f$ locally agrees with $g$ is a connected component of $U$.  \een
\end{lemma}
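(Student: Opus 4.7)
The plan is to closely mirror the proof of Lemma~\ref{lem_projective_propagation}. The core ingredient is the following key observation: for any continuous fibration-preserving $f: U \ra \f$ with $f \equiv \id$ on an open set $W \subset U$, one has $f \equiv \id$ on the (possibly) larger open set $W^+ := \bigcap_{j=1}^{n-1} \pi_j^{-1}(\pi_j(W)) \cap U$. Indeed, for $y \in W^+$ the condition $\pi_j(y) \in \pi_j(W)$ says the $\pi_j$-fiber through $y$ meets $W$, so fibration preservation gives $\pi_j(f(y)) = f_j(\pi_j(y)) = \pi_j(y)$ for every $j$; since a complete flag is determined by its projections to the Grassmannians $\f_j$, we conclude $f(y) = y$.

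For part (1), after postcomposing with $g^{-1}$ I would assume $g = \id$ and $f \equiv \id$ on an open $V \subset U$ containing $\bar x$. Let $S := \{y \in V_i : f \equiv \id \text{ on a neighborhood of } y \text{ in } \f\}$; this is open in $V_i$ and contains $\bar x$, so by connectedness of $V_i$ it suffices to show $S$ is closed in $V_i$. For $y \in V_i$ I would pick a path $\gamma : [0,1] \ra V_i$ from $\bar x$ to $y$ (using local path-connectedness of $V_i$) and propagate $f \equiv \id$ along $\gamma$: iteratively apply the key observation along a sufficiently fine partition of $\gamma$, combined with the Chow--Rashevsky argument from the proof of Lemma~\ref{lem_projective_propagation}(2) applied locally in Bruhat charts $g \cdot \hat N$, using the bracket-generating vector fields $X_{j,j+1}$ tangent to the $\pi_j$-fiber foliations (Lemma~\ref{lem_properties_fibration_between_flag_manifolds}(3)). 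This should extend $f \equiv \id$ to an open set covering $\gamma([0,1])$, hence in particular a neighborhood of $y$.

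Part (2) will follow exactly as in Lemma~\ref{lem_projective_propagation}(2). I define the equivalence relation on $U$ in which $x \sim x'$ iff they can be joined by a path piecewise contained in fibers of some $\pi_j$'s. The vector fields $\{X_{j,j+1}\}_{1 \leq j \leq n-1}$ are bracket-generating on $N$ and tangent to the coset foliations of the $K_j$; by Lemma~\ref{lem_properties_fibration_between_flag_manifolds}(3) their pushforwards to any Bruhat chart are bracket-generating and tangent to the $\pi_j$-fibers, so Chow--Rashevsky (via \cite{montgomery_book}) shows that equivalence classes are open, hence connected components of $U$. By (1), the set where $f$ locally agrees with $g$ is saturated under this equivalence; being open (by definition) and closed (by continuity), it is a single connected component.

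The hard part will be the closedness of $S$ in (1): making the path-propagation quantitative enough to actually reach the endpoint $y$. Iterating the key observation produces a chain $V \subset V^+ \subset V^{++} \subset \cdots$ of open sets on which $f \equiv \id$, but showing that this chain (combined with the Chow--Rashevsky step along fibers, needed to move \emph{transversally} to the iterated sets) eventually covers a neighborhood of an arbitrary point of $V_i$ is the main technical obstacle.
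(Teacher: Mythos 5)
Your part (2) parallels the paper's Lemma~\ref{lem_projective_propagation}(2) and is fine, and your ``key observation'' is correct: if $f\equiv\id$ on $W$ and $y\in U$ has the property that for \emph{every} $j$ the $\pi_j$-fiber through $y$ meets $W$, then fibration-preservation forces $\pi_j(f(y))=\pi_j(y)$ for all $j$, hence $f(y)=y$. However, the resulting iteration $V\mapsto V^+:=\bigcap_j\pi_j^{-1}(\pi_j(V))\cap U$ does not propagate along $V_i$, and this is a genuine gap (which you correctly flag), not merely a technicality. To see the obstruction concretely, take $n=3$ and work in Heisenberg coordinates $(a,b,c)$ on $\hat N$ (group law $(a,b,c)\cdot(a',b',c')=(a+a',b+b',c+c'+ab')$), where one may take $\pi_1(a,b,c)=(b,c)$ and $\pi_2(a,b,c)=(a,c-ab)$ as local models. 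For the box $V=\{|a|<\eps,\ |b|<\eps,\ |c|<\eps\}$ one computes $\pi_1^{-1}(\pi_1(V))=\{|b|<\eps,\ |c|<\eps\}$ and $\pi_2^{-1}(\pi_2(V))=\{|a|<\eps,\ |c-ab|<\eps(1+|a|)\}$; since $|c-ab|\leq|c|+|a||b|<\eps(1+|a|)$ holds automatically on $V$, one gets $V^+=V$. The chain $V\subset V^+\subset\cdots$ therefore stabilizes immediately and never reaches the rest of the fiber $V_1=\{(a,0,0):a\in\R\}$. The appeal to Chow--Rashevsky cannot repair this: Chow--Rashevsky gives horizontal connectivity, but by itself provides no mechanism for transferring the identity $f\equiv\id$ along a horizontal path; that transfer is precisely the content of the statement you are trying to prove.

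The paper's proof of (1) takes a genuinely different route: \emph{induction on the dimension $n$}, with Lemma~\ref{lem_projective_propagation} (the case $n=3$) as the base. Arguing by contradiction, one takes a sequence $(W_1^j,\ldots,W_{n-1}^j)\to(W_1,\ldots,W_{n-1})\in V_i$ on which $f\neq\id$, and passes to a subsequence so that the connected component of $\pi_i^{-1}(W_i^j)\cap U$ through each $(W_1^j,\ldots,W_{n-1}^j)$ meets $V$. Since $f\equiv\id$ on $V$ and $f$ is fibration-preserving, $f$ maps $\pi_i^{-1}(W_i^j)\cap U$ into the single fiber $\pi_i^{-1}(W_i^j)$, and the restriction is again a fibration-preserving map of a lower-dimensional flag manifold (or product thereof). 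Applying part (2) of the induction hypothesis to this restriction forces $f$ to fix the whole connected component of $\pi_i^{-1}(W_i^j)\cap U$, in particular $(W_1^j,\ldots,W_{n-1}^j)$, a contradiction. This descent into the $\pi_i$-fiber is exactly the ingredient missing from your argument: rather than trying to push $f\equiv\id$ along $V_i$ by transverse fibers meeting a fixed small ball, it replaces the propagation problem by a global rigidity statement on the smaller flag manifold sitting inside the fiber, which carries the identity along the whole fiber in one stroke.
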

\begin{proof}
We prove the lemma by induction on the dimension $n\geq 3$. Lemma~\ref{lem_projective_propagation} covers the case $n=3$, so we may assume inductively that the lemma holds for dimensions strictly smaller than $n$.

(1).   We may assume without loss of generality that $g=\id$.  Suppose $f\equiv \id$  on an open subset $V\subset U$ containing $(\bar W_1,\ldots,\bar W_{n-1})$.   Arguing by contradiction, suppose  $i\in \{1,  \cdots, n-1\}$, and  for some $(W_1,\ldots,W_{n-1})\in V_i$, there is a sequence $ \{(W_1^j,\ldots,W_{n-1}^j)\}\subset U$ which converges to $(W_1,\ldots,W_{n-1})$ as $j\ra \infty$, but $$f(W_1^j,\ldots,W_{n-1}^j)\neq (W_1^j,\ldots,W_{n-1}^j)$$ for all $j$.  After passing to a subsequence, we may assume that for all $j$ the connected component of $\pi_i^{-1}(W_i^j)\cap U$ containing $(W_1^j,\ldots,W_{n-1}^j)$  intersects $V$. Since $f$ is fibration preserving and $f\equiv \id$ on $V$, it follows that $f$ maps $\pi_i^{-1}(W_i^j)\cap U$ into $\pi_i^{-1}(W_i^j)$.   Identifying $\pi_i^{-1}(W_i^j)$ with the flag manifold in $\R^{n-1}$, the restriction $f$ to $\pi_i^{-1}(W_i^j)$ induces a fibration-preserving mapping; by the induction assumption, since $f$ fixes $V$ it will also fix $(W_1^j,\ldots,W_{n-1}^j)$.  This is a contradiction.  Hence (1) holds.

(2).  Note that  each element of the basis $X_{1,2},\ldots,X_{n-1,n}$ for $V_1\subset\fn$ is tangent to one of the subgroups $K_j$ for $1\leq j\leq n-1$.  Hence we may use Lemma~\ref{lem_properties_fibration_between_flag_manifolds}(4) and argue as in Lemma~\ref{lem_projective_propagation}(2).  

\end{proof}

\bigskip

\begin{definition}
An indexed tuple $\{W_1^i\}_{0\leq i\leq n}\subset\f_1$ is a {\bf projective frame} if any subset of $n$ elements spans $\R^n$.  The {\bf standard projective frame}  $\{\hat W_1^i\}_{0\leq i\leq n}$ is given by $\hat W_1^i=e_i$ for $1\leq i\leq n$   
 and $\hat W_1^0=e_1+\ldots+e_n$.
An indexed tuple $\{W_1^i\}_{0\leq i\leq n+1}\subset\f_1$ is an {\bf augmented projective frame} if $\{W_1^i\}_{0\leq i\leq n}$ is a projective frame and $W_1^{n+1}=\Span(W_1^n,W_1^0)\cap\Span(W_1^1,\ldots,W_1^{n-1})$.   The {\bf standard augmented projective frame} is $\{\hat W_1^i\}_{0\leq i\leq n+1}$  with  $\hat W_1^{n+1}=e_1+\ldots+e_{n-1}$.
\end{definition}

Given a subset $\Si\subset \f_1$, we obtain (possibly empty) subsets of $\f_j$ and $\f$:
\begin{align*}
\f_j(\Si)&:=\{\Span(\Si')\mid\; \Si'\subset\Si,\; |\Si'|=j,\; \dim\Span(\Si')=j\}\,.\\
&\f(\Si):=\{(W_1,\ldots,W_{n-1})\in \f\mid W_j\in \f_j(\Si)\}\,.
\end{align*}

\begin{lemma}
There is an augmented projective frame $\{\tilde W_1^i\}_{0\leq i\leq n+1}\subset\f_1$ such that:
\bit
\item $\f(\{\tilde W_1^i\}_{0\leq i\leq n+1})$ is contained in $\hat N$.
\item  $\Span(\tilde W_1^n,\ldots,\tilde W_1^{n-j+1}) =\Span(e_n,\ldots,e_{n-j+1})$ for all $1\leq j\leq n-1$.
\eit 

\end{lemma}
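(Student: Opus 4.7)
The plan is to directly extend the construction of Lemma~\ref{lem_small_projective_frame} from the three-dimensional case. First I would observe that $\f(\{\hat W_1^i\}_{0\leq i\leq n+1})$ is a \emph{finite} subset of $\f$: for each $j\in\{1,\ldots,n-1\}$ the set $\f_j(\{\hat W_1^i\})$ contains at most $\binom{n+2}{j}$ elements, and a complete flag is then determined by a nested choice among these, giving finitely many possibilities. Since $\hat N\subset \f$ is open and dense by Lemma~\ref{lem_properties_of_f}(\ref{item_orbit_map_n}) and $G$ acts transitively on $\f$, for each individual flag $F\in \f(\{\hat W_1^i\})$ the locus $\{g\in G\mid g\cdot F\in \hat N\}$ is open and dense in $G$. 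Intersecting over the finite family yields a nonempty open subset of $G$, so we may select $g\in G$ with $g\cdot \f(\{\hat W_1^i\})\subset \hat N$.

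Next, in the standard augmented frame $\hat W_1^i=\Span(e_i)$ for $1\leq i\leq n$, so $\Span(\hat W_1^n,\ldots,\hat W_1^{n-j+1})=W_j^-$ for every $1\leq j\leq n-1$, and in particular the full flag $(W_1^-,\ldots,W_{n-1}^-)$ itself lies in $\f(\{\hat W_1^i\})$. Consequently $g\cdot (W_j^-)\in \hat N$, and by transitivity of the $N$-action on $\hat N$ (Lemma~\ref{lem_properties_of_f}(\ref{item_orbit_map_n})) I can choose $m\in N$ with $(mg)\cdot (W_j^-)=(W_j^-)$. Setting $h:=mg$, we then have $h\in \stab_G((W_j^-))=P$, so $h$ fixes each individual subspace $W_j^-$.

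Finally I would set $\tilde W_1^i:=h\cdot \hat W_1^i$ for $0\leq i\leq n+1$. Because $h$ acts by a projective transformation it preserves linear incidence, so $\{\tilde W_1^i\}_{0\leq i\leq n+1}$ is again an augmented projective frame. For the first bullet,
\[
\f(\{\tilde W_1^i\})=h\cdot \f(\{\hat W_1^i\})=m\cdot (g\cdot \f(\{\hat W_1^i\}))\subset m\cdot \hat N=\hat N,
\]
using $N$-invariance of $\hat N$. For the second bullet,
\[
\Span(\tilde W_1^n,\ldots,\tilde W_1^{n-j+1})=h\cdot \Span(\hat W_1^n,\ldots,\hat W_1^{n-j+1})=h\cdot W_j^-=W_j^-=\Span(e_n,\ldots,e_{n-j+1})
\]
for all $1\leq j\leq n-1$, since $h\in P$ stabilizes the basepoint flag. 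The only substantive step is the existence of a single $g\in G$ translating every flag in the finite family $\f(\{\hat W_1^i\})$ into $\hat N$ simultaneously, which is exactly what the open–dense intersection above provides; the rest is a bookkeeping consequence of $G$-equivariance.
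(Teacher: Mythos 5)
Your proof is correct and takes essentially the same approach as the paper, which simply refers back to the $n=3$ case (Lemma~\ref{lem_small_projective_frame}): choose $g$ moving the finite configuration $\f(\{\hat W_1^i\})$ into the open dense set $\hat N$, then translate by an element of $N$ to fix the basepoint flag. Your extra observations (explicit finiteness of $\f(\{\hat W_1^i\})$, the open–dense intersection, and noting $h\in P$) just spell out what the paper leaves implicit.
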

\begin{proof}
This follows as in the proof of Lemma~\ref{lem_small_projective_frame}.
\end{proof}

\bigskip
\begin{proof}[Proof of Proposition~\ref{thm_fiber_preserving_projective}, $n\geq 4$ case]
The proof parallels the $n=3$    case closely, so we will be brief.

It suffices to show that $f$ locally agrees with some element of $\pgl(n,\R)$ near $x$. Also, we may assume without loss of generality that $f$ is fibration-preserving, $x=(W_1^-,\ldots,W_{n-1}^-)\in U$, $f(x)=x$, and that $D_Pf(x)$ is well-defined and an isomorphism.

For $r>0$ let $W_1^i:=\hat\de_r(\tilde W_1^i)$ for $0\leq i\leq n+1$; we take $r$ small enough that $\f(\{W_1^i\}_{0\leq i\leq n+1})\subset U$.

\begin{claim}  
For $r$ small $\{f_1(W_1^i)\}_{0\leq i\leq n}\subset \f_1$ is a projective frame.
\end{claim}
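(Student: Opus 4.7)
The plan is to follow the same strategy used in the $n=3$ case of the claim, adapting it to general $n$. The essential idea is that the Pansu differential $D_Pf(x)$, being an isomorphism and fibration-preserving (as a limit of fibration-preserving maps), must act on $\f_1$ as some element of $\pgl(n,\R)$, and linear isomorphisms send projective frames to projective frames; the conclusion then follows from an open-condition argument and the convergence guaranteed by Pansu differentiability.

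In more detail, I would set $F_r := \hat\de_{r^{-1}}\circ f\circ \hat\de_r$ on its natural domain. Since $f$ is assumed Pansu differentiable at $x$ with $D_Pf(x)$ an isomorphism, we have $F_r \to D_Pf(x)$ uniformly on compact subsets of $\hat N$ as $r\to 0$. Because the fibrations $\pi_j$ are $G$-equivariant and $\hat\de_r$ acts by left multiplication by $g_r$, each $\hat\de_r$ sends $\pi_j$-fibers to $\pi_j$-fibers; composed with the fibration-preserving $f$, this makes each $F_r$ fibration-preserving, so the limit $D_Pf(x)$ is also fibration-preserving. By the earlier structural results (Lemma~\ref{lem_properites_aut_gr_n}(3), which characterizes the graded automorphisms of $N$ that preserve all coset foliations of the $K_j$), the Pansu differential $D_Pf(x)$ must therefore be induced by some diagonal element $\hat\Phi = \diag(\la_1,\ldots,\la_n) \in \pgl(n,\R)$.

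The rest is automatic. The action of $\hat\Phi$ on $\f_1$ is the usual linear action of a diagonal matrix, and since $\{\tilde W_1^i\}_{0\leq i\leq n}$ is a projective frame by construction, so is $\{\hat\Phi_1(\tilde W_1^i)\}_{0\leq i\leq n}$. Being a projective frame is an open condition in $(\f_1)^{n+1}$ (nonvanishing of determinants of all $n$-subsets of representative vectors), and $(F_r)_1 \to \hat\Phi_1$ uniformly on a neighborhood of the compact set $\{\tilde W_1^i\}_{0\leq i\leq n}$ in $\f_1$, so for all sufficiently small $r>0$ the tuple $\{(F_r)_1(\tilde W_1^i)\}_{0\leq i\leq n}$ is a projective frame. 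Finally,
\[
(F_r)_1(\tilde W_1^i) \;=\; \hat\de_{r^{-1}}\bigl(f_1(\hat\de_r(\tilde W_1^i))\bigr) \;=\; \hat\de_{r^{-1}}(f_1(W_1^i)),
\]
and since $\hat\de_{r^{-1}}$ acts on $\f_1$ as an element of $\pgl(n,\R)$ (hence maps projective frames to projective frames), applying $\hat\de_r$ to both sides shows that $\{f_1(W_1^i)\}_{0\leq i\leq n}$ is a projective frame, as claimed.

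There is no real obstacle in this proof beyond ensuring the correct invocation of Lemma~\ref{lem_properites_aut_gr_n}(3); the only subtlety worth flagging is verifying that $D_Pf(x)$, as a fibration-preserving graded automorphism, falls in the ``diagonal'' case rather than the ``$\tau$-twisted'' case—this is precisely why the proof of Proposition~\ref{thm_fiber_preserving_projective} is set up to allow pre/post-composition with $\tau$ (or, in the Sobolev setting, uses Corollary~\ref{cor_preservation_coset_foliation} to reduce to the $\eps=0$ normalization) before extracting the claim.
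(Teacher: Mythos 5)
Your argument is correct and follows the paper's approach exactly (the paper explicitly omits this claim's proof as ``similar to claim in the proof of the $n=3$ case,'' and your steps mirror that proof: rescale, note fibration-preservation passes to the limit, identify $D_Pf(x)$ as a diagonal element via Lemma~\ref{lem_properites_aut_gr_n}(3), and use openness of the projective-frame condition). One minor inaccuracy in your closing paragraph: the proof of Proposition~\ref{thm_fiber_preserving_projective} itself never pre/post-composes with $\tau$---a fibration-preserving graded automorphism automatically preserves each $K_j$ and therefore falls into the diagonal case of Lemma~\ref{lem_properites_aut_gr_n}(3) (note $\tau(K_j)=K_{n-j}$); the $\tau$-normalization happens earlier, at the Sobolev level via Corollary~\ref{cor_preservation_coset_foliation}, in the proof of Theorem~\ref{thm_main}, before Proposition~\ref{thm_fiber_preserving_projective} is ever invoked.
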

We omit the proof, as it is similar to claim in the proof of the $n=3$ case.

Let $g_1,g_2\in \pgl(n,\R)$ be such that $g_1(\hat W_1^i)=W_1^i$, $g_2(f_1(W_1^i))=\hat W_1^i$ for $0\leq i\leq n$.   We now define $\hat U=g_1^{-1}(U)$ and $\hat f:=g_2\circ f\circ g_1:\hat U\ra \f$.  

Arguing as in the $n=3$ case, one obtains that $\hat f$ is fibration-preserving,  $ \f(\{\hat W_1^i\}_{0\leq i\leq n+1})\subset\hat U$,
$f_j$ fixes  $\f_j(\{\hat W_1^i\}_{0\leq i\leq n+1})$ elementwise and $f$ fixes  $ \f(\{\hat W_1^i\}_{0\leq i\leq n+1})$ elementwise.

For $r>0$ small we define  $\phi:\R^n\supset B(0,r)\ra \R^n$ by 
$$
\Span(\phi(x_1,\ldots,n),1)=f_1(\Span(x_1,\ldots,x_n,1))\,.
$$
Applying Lemma~\ref{lem_special_affine_n_geq_4}, for some $m\in \R$ we get 
$$
\phi(x_1,\ldots,x_n)=(mx_1,\ldots,mx_n)\,.
$$  
As in the $m=3$ case we see that $m\neq 0$, and that $f_1$ agrees with $g:=\diag(m,\ldots,m,1)$ near $e_n$.  This implies that $f$ agrees with $g$ near $x=(W_1^-,\ldots,W_{n-1}^-)$.

\end{proof}

\bigskip
\section{The proof of Theorem~\ref{thm_main}}
\label{sec_proof_thm_main}

For every $x\in U$ choose a connected open set $U_x\subset U$ containing $x$ and group elements  $\bar g_x,\bar g_x'\in\pgl(n,\R)$ such that $\bar g_x(U_x),\bar g_x'(f(U_x))\subset \hat N$, and let  $\hat f_x:=\bar g_x'\circ f\circ \bar g_x^{-1}:\hat N\supset\hat V_x \ra\hat V_x'\subset \hat N$, where $\hat V_x:=\bar g_x(U_x)$,  $\hat V_x':=\bar g_x'(f(U_x))$.  Let $f_x:=\al^{-1}\circ  \hat f_x\circ\al:N\supset V_x\ra V_x'\subset N$ where $V_x:=\al^{-1}(\hat V_x)$, $V_x':=\al^{-1}(\hat V_x')$.     
    By Corollary~\ref{cor_preservation_coset_foliation}, for some $\eps_x\in \{0,1\}$, the map $\tau^{\eps_x}\circ f_x$ locally preserves the coset foliation of $K_j$ for all $1\leq j\leq n-1$.  Now Lemma~\ref{lem_properties_fibration_between_flag_manifolds}(4) gives that the map 
$$
\al\circ(\tau^{\eps_x}\circ f_x)\circ\al^{-1}=(\al\circ \tau^{\eps_x}\circ\al^{-1})\circ \hat f_x=\rho(\tau^{\eps_x})\circ \hat f_x
$$
locally preserves the fibration $\pi_j$ for $1\leq j\leq n-1$; here we have used the fact that $\rho(\tau)\circ\al=\al\circ \tau$.   Applying Proposition~\ref{thm_fiber_preserving_projective}, we see that $\rho(\tau^{\eps_x})\circ\hat f_x$ agrees with some element of $G$  and therefore $f=\rho(\Phi_x)\restr_{U_x}$ for some $\Phi_x\in \aut(G)$.    Since $\Phi_x$ is locally constant as a function of $x$,  by the connectedness of $U$, the automorphism $\Phi_x$ is independent of $x$, and so $f=\rho(\Phi)\restr_U$ for some $\Phi\in \aut(G)$.  By Lemma~\ref{lem_properties_aut_g}(1) we have $\Phi=\Phi_0^{\eps}\circ I_g$ for some $\eps\in \{0,1\}$, $g\in G$, where $\Phi_0$ denotes transpose-inverse; then  $\rho(\Phi)=(\rho(\Phi_0))^{\eps}\circ\rho(I_g)=\psi^\eps\circ g$ using Lemma~\ref{lem_action_aut_g_on_f}(5).

\bigskip
\section{The complex and quaternionic cases}~
\label{sec_complex_quaternionic}

The  arguments from the previous sections are also valid in the complex and quaternion cases, with   some straightforward modifications.  In this section we indicate what modifications are needed in these cases.  The necessity for these modifications  are due to the presence of nontrivial automorphisms of $\mathbb C$ and $\mathbb H$  and the non-commutativity of the  quaternions.

We first recall some facts about quaternions. Given any quaternion 
 $x=x_0+x_1i+x_2j+x_3k\in \mathbb H$ ($x_i\in \mathbb R$),  the conjugation of $x$ is
  $\bar x=x_0-x_1i-x_2j-x_3k$.   It is easy to check that $\overline{xy}=\bar y\bar x$ for any $x,y\in \mathbb H$.

   Let  $\lambda, \mu$  be  unit quaternions 
  satisfying $\lambda^2=\mu^2=(\lambda \mu)^2=-1$.    Set $\nu=\lambda\mu$. Then we have 
          $\mu=\nu\lambda$ and $\lambda=\mu\nu$. Define  a map 
           $h=h_{\lambda, \mu, \nu}: \mathbb H\rightarrow \mathbb H$ by
            $$h(a_0+ia_1+ja_2+ka_3)=a_0+\lambda a_1+\mu a_2+\nu a_3.$$ 
            Then it is easy to check that $h$ is an automorphism of $\mathbb H$: it is a real  linear isomorphism and $h(xy)=h(x)h(y)$ for any  $x, y\in \mathbb H$. Conversely, for any automorphism $h: \mathbb H\ra \mathbb H$,  if we set 
             $\lambda:=h(i)$, $\mu:=h(j)$, $\nu:=h(k)$, then  $\lambda^2=\mu^2=\nu^2=-1$,  $\nu=\lambda\mu$  and $h=h_{\lambda, \mu, \nu}$. 
     By the Skolem-Noether theorem, every automorphism     of  $\H$       is inner.

\bigskip
\subsection{Changes needed for Section 2}\label{changes2}

 Let $F=\mathbb C, \mathbb H$.   
  Let $\gl(n,F)$ be the group of invertible elements in the  ring $M_n(F)$ of $n\times n$ matrices with entries in $F$. 
  The objects   $P_F^+, P_F^-$ and $N_F$ are defined as before with $\mathbb R$ replaced by $F$.  The group $G_F$ is defined as before in the complex case with $\mathbb R$ replaced with $\mathbb C$:
 $$G_{\mathbb C}=\text{GL}(n, \mathbb C)/{\{\lambda\,\text{id}|0\not=\lambda\in \mathbb C\}}.$$
  In the quarternion case it is defined  by 
  $$G_{\mathbb H}=\text{GL}(n, \mathbb H)/{\{\lambda\,\text{id}\mid 0\not=\lambda\in \mathbb R\}}.$$
     Note that 
      $\{aI\mid a\in \mathbb H\backslash \{0\}\}$ is not normal in $GL(n, \mathbb H)$ and so we cannot quotient out by this subgroup.   
     Similarly,
     $P_{\mathbb C}=P_{\mathbb C}^-/{\{\lambda\,\text{id}\mid 0\not=\lambda\in \mathbb C\}} $  and 
   $P_{\mathbb H}=P_{\mathbb H}^-/{\{\lambda\,\text{id}\mid 0\not=\lambda\in \mathbb R\}} . $
  
\bigskip
\subsection*{The flag manifold}  
         We  view $F^n$ as a right $F$ module. 
        The {\bf flag manifold $\f_{F}$} is the set of (complete) flags in $F^n$, i.e. the collection of nested families of submodules of $F^n$
$$
W_1\subset\ldots \subset W_{n-1} 
$$
where $W_j$ has dimension (rank) $j$.  
  Matrix multiplication yields an action $\gl(n,F)\acts F^n$ by $F$-module automorphisms in the usual way,  which induces  actions $\gl(n,F)\acts \f_{k,F}$,
    where $ \f_{k,F}$  is the Grassmannian of submodules of  dimension (rank) $k$.

  Lemma \ref{lem_properties_of_f}   
   holds without changes.

\bigskip
\subsection*{Automorphisms}   The map $A\mapsto (A^*)^{-1}$ is a Lie group automorphism of $\gl(n,F)$, where $A^*$ denotes the conjugate transpose of $A$. 
  So the map    $\tau: GL(n, F)  \ra 
      GL(n, F)$    given by $\tau(A)=\Pi (A^*)^{-1}\Pi^{-1}$   is also an automorphism  and 
          induces an automorphism (still denoted by $\tau$)  of $\mathfrak{gl}(n, F)$ which is given by
        $$(\tau(A))_{ij}=-\overline{A}_{n-j+1, n-i+1}.$$


  For any automorphism $h$ of $F$, the automorphism 
        $ GL(n, F)   \ra   GL(n, F)$, $(a_{ij})\mapsto (h(a_{ij}))$  
        of $GL(n,  F)$ induces an automorphism of $G$, which we denote by 
         $\hat{h}$.

\begin{theorem}  (\cite[Theorems 1 and 2]{dieudonne}) \label{diedo}
Every automorphism of $G_F$ is induced by an automorphism of $GL(n, F)$.
 The group 
$\text{Aut}(G_F)$ is generated by $\tau$, maps of the form $\hat h$ (with 
$h\in Aut(F)$) and the inner automorphisms. 
\end{theorem}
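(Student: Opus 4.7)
The plan is to deduce the statement from the cited theorems of Dieudonn\'e by passing between $\gl(n,F)$ and its quotient $G_F$.

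First, I would identify the kernel of the quotient map $q:\gl(n,F)\ra G_F$ with $\cent(\gl(n,F))$. For $F=\C$ this is standard. For $F=\H$, a direct computation using the elementary matrices $E_{ij}$ shows that $\lambda I$ commutes with all of $\gl(n,\H)$ if and only if $\lambda$ commutes with every quaternion, i.e. $\lambda\in\R$; hence $\cent(\gl(n,\H))=\R\cdot I$. Thus in both cases $G_F=\gl(n,F)/\cent(\gl(n,F))$, and any $\Phi\in\aut(\gl(n,F))$ descends to an automorphism of $G_F$ since $\Phi$ must preserve the center.

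Second, and substantively, I would show the converse: every $\Psi\in\aut(G_F)$ lifts to some $\tilde\Psi\in\aut(\gl(n,F))$, i.e. is induced by an automorphism of $\gl(n,F)$. This is the content of Dieudonn\'e's Theorem~1 in the cited paper. The intrinsic argument characterizes the images in $G_F$ of the root subgroups $\{I+tE_{i,i+1}\mid t\in F\}$ and of the diagonal torus by group-theoretic properties (maximal abelian, normalizer containing a prescribed Weyl-type subgroup, etc.), so that $\Psi$ must permute them compatibly; working out the induced action on this combinatorial data produces the lift, uniquely up to a central character.

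Third, having reduced to $\aut(\gl(n,F))$, I would invoke Dieudonn\'e's Theorem~2: every automorphism of $\gl(n,F)$ is a product of an inner automorphism $I_g$, the contragredient $A\mapsto(A^*)^{-1}$ (which equals $\tau$ up to conjugation by the permutation matrix $\Pi$, hence differs from $\tau$ by an inner automorphism), an automorphism $\hat h$ with $h\in\aut(F)$, and a central character $\chi:\gl(n,F)\ra\cent(\gl(n,F))$. The character $\chi$ becomes trivial after quotienting by $\cent(\gl(n,F))$, leaving precisely the three families $\{\tau\}$, $\{\hat h\}$, and $\{I_g\}$ as generators of $\aut(G_F)$, as claimed. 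The main obstacle is the lifting step in paragraph two: one has to verify that $G_F$ retains enough of the linear-group combinatorics (root subgroups together with their intersection and normalizer pattern) to determine lifts up to the center, and this is exactly the content of the cited classical result.
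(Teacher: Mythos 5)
The paper offers no proof of Theorem~\ref{diedo}: it is stated as a citation to Dieudonn\'e's classification of automorphisms of the classical groups, so there is nothing in the text to compare your argument against. That said, your reconstruction is sound and reflects the logical structure one would actually use. The kernel identification is correct (for $F=\H$ the kernel $\R^*\cdot I$ is exactly the center of $\gl(n,\H)$ since $\cent(\H)=\R$), automorphisms of $\gl(n,F)$ preserve the center and so descend, the lifting of automorphisms of $G_F$ is the content of the cited Theorem~1, and the factorization into inner, contragredient (which differs from $\tau$ only by the inner automorphism $I_\Pi$), field-automorphism, and central-character pieces is Theorem~2; the central character dies in the quotient, leaving the three generating families stated. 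One minor caveat worth flagging if you were to polish this: Dieudonn\'e's theorems concern abstract group automorphisms, so for $F=\C$ the $\hat h$ factor a priori ranges over all (possibly discontinuous) field automorphisms; the paper addresses this only in the remark following the theorem, which restricts to continuous $h$ when one wants Lie group automorphisms. Your sketch silently elides this, but so does the theorem statement itself.
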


      Notice that     the    automorphisms $h$  associated  with 
   Lie group automorphisms   are continuous. 
  We recall that there are only two continuous automorphism of   $\mathbb C$: the identity map and the complex conjugation.   On the other hand, 
by the Skolem-Noether theorem, every automorphism          $h: \mathbb H\ra \mathbb H$  is inner.
   It follows that the automorphism $\hat{h}$ of $G_{\mathbb H}$ 
         is also inner: if $h=I_a$ for some $a\in \mathbb H$, then $\hat{h}=I_g$ with
           $g=\text{diag}(a,  \cdots, a)$.
  


 The following is the counterpart of 
Lemma \ref{lem_properites_aut_gr_n} 
in the quaternion   and complex cases.

 
 \begin{lemma}\label{graded-quaternion}
Let $F=\mathbb C, \mathbb H$.   Let  $n\ge 3$  if $F=\mathbb H$  and   $n\ge 4$  if $F=\mathbb C$.  
 \ben
 \item If $\Phi\in \aut(G_F)$ and $\Phi(N_F)=N_F$, then $\Phi$ induces a graded automorphism of $N_F$ if and only if  $\Phi\restr_{N_F}=\tau^\eps\circ  \hat h\circ    I_g\restr_{N_F}$    for some $\eps\in \{0,1\}$, some  continuous  automorphism $h$ of $F$ and  $g=\diag(\la_1,\ldots,\la_n)\in G_F$.
\item Every graded automorphism of $N_F$ arises as in (1).
\item   For $1\leq j\leq n-1$ let $\fk_j\subset \fn_F$ be the Lie subalgebra generated by $\{a X_{i,i+1}|a\in F, i\neq n-j\}$, and $K_j\subset N_F$ be the Lie subgroup with Lie algebra $\fk_j$.    A graded automorphism $N_F\ra N_F$ is induced by conjugation by some $g=\diag(\la_1,\ldots,\la_n)$ if and only if   it  preserves the subgroups $K_j$ for $1\leq j\leq n-1$.
 \een
 \end{lemma}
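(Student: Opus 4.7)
The strategy is to follow the real-case argument of Lemma~\ref{lem_properites_aut_gr_n}, modifying for the field automorphism $\hat h$ and, in the quaternion case, the non-commutativity of $\H$. For part (1), the ``if'' direction is a direct verification that each of $\tau$, $\hat h$, and $I_g$ with $g=\diag(\la_1,\ldots,\la_n)$ commutes with the Carnot dilation $\de_r = I_{g_r}$; this is automatic because $g_r$ has real (hence central) entries. For the converse, Theorem~\ref{diedo} decomposes $\Phi = \tau^{\eps}\circ \hat h\circ I_{g_0}$, so after composing with $(\hat h)^{-1}\circ\tau^{-\eps}$ we may assume $\Phi=I_{g_0}$. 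The hypothesis $\Phi(N_F)=N_F$ says $g_0$ lies in the normalizer of $N_F$ in $G_F$; using that $Fe_1\subset F^n$ is the unique $N_F$-invariant rank-one right submodule, an induction shows this normalizer is the image of $P_F^+$. After multiplying by a diagonal we reduce to $g_0\in N_F$, and the identical computation from the real case with the grading relation $I_{g_0}\circ\de_r=\de_r\circ I_{g_0}$ forces $g_0$ to be central.

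For part (2), the opening is identical to the real proof: the $\ad$-rank criteria on $V_1$ still single out $F\cdot X_{1,2}$ and $F\cdot X_{n-1,n}$, so $\phi$ permutes or preserves these, and after composing with $\tau$ we may assume it preserves them. The same induction scheme (using that the subalgebra generated by $\{F\cdot X_{i,i+1}\}_{2\le i\le n-2}$ is isomorphic to $\fn_F$ in dimension $n-2$) then yields that $\phi$ preserves each $F\cdot X_{i,i+1}$. Write $\phi(aX_{i,i+1})=h_i(a)X_{i,i+1}$ for real-linear $h_i:F\to F$ and set $\la_i:=h_i(1)$. Applying $\phi$ to $[aX_{i,i+1},bX_{i+1,i+2}]=ab\,X_{i,i+2}$ in two ways yields the cascade $h_i(a)\la_{i+1}=\la_i h_{i+1}(a)$, while comparing the two parenthesizations of the iterated triple bracket producing $X_{1,4}$ forces $k(a):=h_3(a)\la_3^{-1}$ to be unital and multiplicative, hence a continuous field automorphism of $F$.

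The classification now splits. In the complex case, commutativity collapses the cascade to a single automorphism $h\in\{\id,\text{conj}\}$, and solving $\mu_i\mu_{i+1}^{-1}=h^{-1}(\la_i)$ recursively gives $g=\diag(\mu_1,\ldots,\mu_n)$ with $\phi|_{V_1}=(\hat h\circ I_g)|_{V_1}$. In the quaternion case, Skolem--Noether writes $k=I_\eta$ with $\eta\in\H^*$, each $h_i$ is an inner automorphism $I_{\nu_i}$ by some $\nu_i\in\H^*$ defined only up to a real scalar, and this scalar freedom is precisely what is needed to normalize the $\nu_i$ so that $\nu_i=\la_i\nu_{i+1}$ holds on the nose; then $\phi|_{V_1}=I_{\diag(\nu_1,\ldots,\nu_n)}|_{V_1}$, which can be rewritten in the form $\hat h\circ I_g$ via the inner/diagonal identification from Subsection~\ref{changes2}. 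Since $V_1$ generates $\fn_F$ as a Lie algebra, this determines $\phi$ on all of $\fn_F$. Part (3) then follows immediately: $\hat h$ and $I_g$ (for diagonal $g$) act within each position $X_{i,i+1}$ and hence preserve every $K_j$, while $\tau$ sends $K_j$ to $K_{n-j}$, so if every $K_j$ is preserved then $\eps=0$.

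The main obstacle I anticipate is the quaternionic bookkeeping in the last step: one must verify that the real scalar ambiguity in each $\nu_i$ is exactly the right amount of freedom to enforce $\nu_i=\la_i\nu_{i+1}$ on the nose rather than only up to a real scalar. A secondary subtlety is the normalizer calculation in part (1) for $F=\H$, where $\{\la I \mid \la\in\H^*\}$ is not normal in $\gl(n,\H)$, so the argument must be carried out with lifts to $\gl(n,\H)$ rather than in $G_\H$ directly.
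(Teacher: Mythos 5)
Your treatment of parts (1) and (3) matches the paper, and the cascade computation in part (2) is correct as far as it goes: the relation $h_i(a)\la_{i+1}=\la_i h_{i+1}(a)$ and the multiplicativity of the normalized map are genuine consequences of the bracket relations, and the Skolem--Noether bookkeeping does absorb the real-scalar ambiguity. (Incidentally, you don't need the iterated triple bracket producing $X_{1,4}$ at all: applying $\phi$ to $[aX_{i,i+1},bX_{i+1,i+2}]=ab\,X_{i,i+2}$ alone already gives $h_i(ab)\la_{i+1}=h_i(a)h_{i+1}(b)=h_i(a)\la_i^{-1}h_i(b)\la_{i+1}$, hence $\la_i^{-1}h_i$ is a unital multiplicative real-linear map. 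This works for any $n\ge 3$.) But the proposal has a genuine gap in exactly the place where the paper's proof does most of its work.

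The step ``the $\ad$-rank criteria on $V_1$ still single out $F X_{1,2}$ and $F X_{n-1,n}$'' is only valid for $n\ge 5$. For $n=4$ (any $F$) the set of nonzero $x\in V_1$ of minimal $\ad$-rank on $V_1$ is the entire plane $F X_{1,2}\oplus F X_{3,4}$, not the union of the two coordinate lines; the paper's real-case argument for $n=4$ therefore uses the secondary criterion $\rank(\ad x\restr_{V_2})=0$ to isolate $F X_{2,3}$ and then runs a further bracket case analysis, none of which appears in your sketch. Much more seriously, the lemma is asserted for $n=3$ when $F=\H$, and this is the base case that everything hinges on. In $\fn_{3,\H}$, \emph{every} nonzero $x\in V_1=\H X_{1,2}\oplus\H X_{2,3}$ has $\rank(\ad x\restr_{V_1})=4$, so the rank criterion gives no information at all; nor can you invoke the lower-dimensional induction (the subalgebra generated by $\{F X_{i,i+1}\}_{2\le i\le n-2}$ is empty when $n=3$), and your $X_{1,4}$ argument cannot even be stated. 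The paper's proof for $n=3$, $F=\H$ is an honest new argument: it uses the relation $[X,iX]=0$ (and its $j,k$ analogues) together with the fact that two quaternions commute iff their imaginary parts are real multiples of each other, to force a putative image $\phi(X_{1,2})=aX+bY$ with $a,b\ne 0$ to land in a two-real-dimensional subspace, contradicting injectivity. Without some such argument the claim $\phi(\H X_{1,2})=\H X_{1,2}$ or $\phi(\H X_{1,2})=\H X_{2,3}$ is unjustified, and the rest of your part (2) has nothing to stand on in the $n=3$, $F=\H$ case.
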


We remark that Lemma \ref{graded-quaternion}  (2)   implies that  every graded automorphism of  ${\mathfrak n}_{n, \mathbb C}$ is either complex linear or complex  antilinear. 

\begin{proof}[Proof of Lemma \ref{graded-quaternion}  in the quaternion  case] 
  Let $n\ge 3$.  
 The proof of (1) and (3) are the same  (by using Theorem \ref{diedo})  
    as that of (1) and (3) in Lemma~\ref{lem_properites_aut_gr_n}.   Here we prove (2).  
 
   The proof is by induction on $n$.  We first consider the case $n=3$.
Denote  $X=X_{12}$,  $Y=X_{23}$  and $Z=X_{13}$.  
  We have $[aX, bY]=abZ$  for $a, b\in \mathbb H$.  Note that  the Lie bracket is not linear over $\mathbb H$.  
 Let $\mathbb H X=\{aX\mid a\in \mathbb H\}$ be the subspace spanned by $X$. It has dimension $4$ over $\mathbb R$. Similarly we have   $\mathbb H Y$ and  $\mathbb H Z$.
 The grading  $\mathfrak n_{3,\H}=V_1\oplus V_2$ is given by 
 $V_1=\mathbb H X\oplus \mathbb H Y$ and $V_2=\mathbb H Z$.

  Let $A:\mathfrak n_{3,\H}\rightarrow \mathfrak n_{3,\H}$ be a graded automorphism.
  We claim that   $A$    satisfies  either  $A(\mathbb H X)= \mathbb H X$,
$A(\mathbb H Y)= \mathbb H Y$ or $A(\mathbb H X)= \mathbb H Y$,
$A(\mathbb H Y)= \mathbb H X$.  
 There are $a, b\in \mathbb H$  such that
 $A(X)=aX+bY$.  
 To prove the claim it suffices to show that  $a=0$ or $b=0$.  Suppose $a, b\not=0$ we shall get a contradiction.   
There are $c, d\in \mathbb H$ such that $A(iX)=cX+dY$.
As $[X, iX]=0$ we have $0=[A(X), A(iX)]=(ad-cb)Z$ (being careful about the order in  $cb$), which yields  $a^{-1}c=db^{-1}$.
 Set $\lambda=a^{-1}c$. We have $c=a\lambda$ and $d=\lambda b$  and so
 $A(iX)=a\lambda X+\lambda b Y$. Similarly there are $\mu, \nu\in \mathbb H$ such that
  $A(jX)=a\mu X+\mu b Y$ and  $A(kX)=a\nu X+\nu b Y$. By further considering
  the brackets between $A(iX)$, $A(jX)$, $A(kX)$ we get
   $a\lambda \mu b=a\mu \lambda b$,   $a\lambda \nu b=a\nu \lambda b$  and 
    $a \mu \nu b=a\nu \mu  b$.  It follows that $\lambda$, $\mu$ and $\nu$ commute with each other. Recall the fact that two quaternions commute with each  other  if and only   if their imaginary parts are real multiples of each other. Hence there are real numbers
     $r_i, t_i$   ($i=1, 2,3$) and a purely imaginary  quaternion $h$ such that
      $\lambda=r_1+t_1 h$,   $\mu=r_2+t_2 h$,   $\nu=r_3+t_3 h$.     We then have 
       $A(iX)=r_1(aX+bY)+t_1(ahX+hbY)$ and so $A(iX)$ lies in the $2$-dimensional real vector subspace spanned by $aX+bY$ and $ahX+hbY$. Similarly $A(jX)$ and $A(kX)$ also lie in  this subspace, contradicting the   fact that $A$ is an isomorphism. This finishes the proof of the claim.

After possibly composing  $A$ with $\tau$ we may assume that $A(\mathbb H X)=\mathbb H X$ and 
 $A(\mathbb H Y)=\mathbb H Y$.  
There are  $a_0, a_1, a_2, a_3, b_0, b_1, b_2, b_3\in \mathbb H$   such that
 $A(X)=a_0X$, $A(iX)=a_1X$,  $A(jX)=a_2X$,  $A(kX)=a_3X$  and 
 $A(Y)=b_0Y$, $A(iY)=b_1Y$,  $A(jY)=b_2Y$,  $A(kY)=b_3Y$.   By applying $A$ to 
 $[X, iY]=[iX, Y]=[jX, kY]=-[kX, jY]$,  
   $[X, jY]=[jX, Y]=[kX, iY]=-[iX, kY]$  and $[X, kY]=[kX, Y]=[iX, jY]=-[jX, iY]$ we obtain
   $$a_0b_1=a_1b_0=a_2b_3=-a_3b_2,$$
    $$a_0b_2=a_2b_0=a_3b_1=-a_1b_3,$$
    $$a_0b_3=a_3b_0=a_1b_2=-a_2b_1.$$
      From these  we get
       $$a_0^{-1}a_1=b_1b_0^{-1}=-b_2b_3^{-1}=b_3b_2^{-1},$$
        $$a_0^{-1}a_2=b_1b_3^{-1}=b_2b_0^{-1}=-b_3b_1^{-1},$$
        $$a_0^{-1}a_3=-b_1b_2^{-1}=b_2b_1^{-1}=b_3b_0^{-1}.$$
        Set $\lambda=a_0^{-1}a_1$, $\mu=a_0^{-1}a_2$  and $\nu=a_0^{-1}a_3$.
          From $\lambda=-b_2b_3^{-1}=b_3b_2^{-1},$  we get $\lambda^2=-1$. Similarly from $\mu=b_1b_3^{-1}=-b_3b_1^{-1},$  we get $\mu^2=-1$.  Finally 
           $\nu=b_3b_0^{-1}=b_3b_2^{-1} b_2b_0^{-1}=\lambda\mu$.  
        Also notice $a_1=a_0\lambda$, $a_2=a_0\mu$, $a_3=a_0\nu$ and 
         $b_1=\lambda b_0$, $b_2=\mu b_0$, $b_3=\nu b_0$. So the automorphism $A$ is given by $A(X)=a_0X$,  $A(iX)=a_0\lambda X$, $A(jX)=a_0\mu X$, $A(kX)=a_0\nu X$ and
       $A(Y)=b_0Y$,  $A(iY)=\lambda b_0Y$, $A(jY)=\mu b_0 Y$, $A(kY)=\nu b_0 Y$.  
       Now it is easy to check that   $A=\text{Ad}_g\restr_{\mathfrak n_\H}\circ \hat{h}_{\lambda, \mu,\nu}$, where  $g=\text{diag}(a_0, 1, b_0^{-1})$.

       Now assume $n\ge 4$. 
 By an argument similar to the real case  
 (using rank  and the induction hypothesis), we  get  (after possibly composing with $\tau$)  $A(\mathbb H X_{i, i+1})=\mathbb H X_{i, i+1}$  for each $1\le i\le n-1$.    Then there are nonzero quaternions $a_1, \cdots, a_{n-1}$ such that 
  $A(X_{i, i+1})=a_iX_{i, i+1}$.    Set $b_n=1$ and  $b_i=(a_i\cdots a_{n-1})^{-1}$   for $1\le i\le n-1$,    and   let 
   $g=\text{diag}(b_1, \cdots,  b_n)$.   
  By composing  $A$  with  $\text{Ad}_g\restr_{\mathfrak n_\H}$   we may assume that 
   $A(X_{i,i+1})=X_{i, i+1}$  for each $1\le i\le n-1$.    
  Now for each $1\le i\le n-2$,   $\mathbb H X_{i, i+1}$ and $\mathbb H X_{i+1, i+2}$ generate a  Lie subalgebra isomorphic to $\mathfrak{n}_3$. By the previous paragraph we see that there are $\lambda_i, \mu_i$ satisfying $\lambda_i^2=\mu_i^2=(\lambda_i\mu_i)^2=-1$   such that 
   $A((a_0+a_1i+a_2j+a_3k)X)=(a_0+a_1\lambda_i+a_2\mu_i+a_3\nu_i)X$ for $X=X_{i,i+1}, X_{i+1, i+2}, X_{i, i+2}$, where $\nu_i=\lambda_i\mu_i$  and $a_0, a_1, a_2, a_3\in \mathbb R$.   
     By considering the Lie subalgebra generated by $X_{i+1, i+2}$ and 
    $X_{i+2, i+3}$  and comparing the values for
    $A((a_0+a_1i+a_2j+a_3k)X_{i+1,  i+2})$  
      we get  
       $\lambda_i=\lambda_{i+1}$, $\mu_i=\mu_{i+1}$ and $\nu_i=\nu_{i+1}$.  It follows that 
      $A=\hat{h}_{\lambda, \mu, \nu}$ with $\lambda=\lambda_1$, $\mu=\mu_1$ and $\nu=\lambda\mu$.  
\end{proof}

\bigskip
\begin{proof}[Proof of Lemma \ref{graded-quaternion}  in the   complex  case]
  Let $n\ge 4$.  
 The proofs of (1)  and (3)  are the  same as in the real case  by using Theoem \ref{diedo}.  As remarked above,   the automorphism $\hat h$    is either the identity map or the complex conjugation.   
 
 (2)   The proof is by induction on $n$.   We first consider the case $n=4$.   
 Let $A: {\mathfrak n}_{4, \mathbb C} \ra  {\mathfrak n}_{4, \mathbb C}$ be  a 
    graded automorphism. We  observe that ${\mathfrak n}_{n, \mathbb C}$ is the complexification of $\mathfrak n_n$.   An easy calculation shows 
     $\text{rank}(\text{ad}\, x)=\dim(\text{ad}\, x( {\mathfrak n}_{4, \mathbb C}))\ge 4$ for any   nonzero element $x$  in the first layer of $ {\mathfrak n}_{4, \mathbb C} $.  Clearly,
          $\text{rank}(\text{ad}\, x)\le 6$.  So the condition  
          $$\max\{\text{rank}(\text{ad}\, x)|x\in V_1\}<2 \min\{\text{rank}(\text{ad}\, x)|0\not=x\in V_1\}$$  in    \cite[Lemma 4.7]{KMX1}   is satisfied   and we conclude that 
            every graded automorphism of  ${\mathfrak n}_{4, \mathbb C}$ is either complex linear or complex  antilinear.   So after possibly composing $A$ with the complex conjugation we may assume $A$ is complex linear.  The rest of the arguemnt in the case $n=4$ is the same as in the real case. 
            
        Now let $n\ge 5$ and assume the statement holds for      all integers less than $n$.
         Let  $A: {\mathfrak n}_{n, \mathbb C} \ra  {\mathfrak n}_{n, \mathbb C}$ be  a 
    graded automorphism. 
        Arguing as in the real case using rank, we see that after   possibly composing 
    $A$ with      $\tau$     we have $A(\mathbb C X_{i, i+1})=\mathbb C X_{i, i+1}$  for each $i$.
        So $A (\mathfrak n_+)=\mathfrak n_+$  and $A (\mathfrak n_-)=\mathfrak n_-$,
         where $\mathfrak n_+$ is the Lie sub-algebra of $ {\mathfrak n}_{n, \mathbb C}$ generated by $\{X_{i, i+1}, 1\le i\le n-2\}$ and 
        $\mathfrak n_-$ is the Lie sub-algebra of $ {\mathfrak n}_{n, \mathbb C}$ generated by $\{X_{i, i+1}, 2\le i\le n-1\}$.  Since 
        $\mathfrak n_+$  and $\mathfrak n_-$  are isomorphic to 
        ${\mathfrak n}_{n-1, \mathbb C} $, the induction hypothesis implies that   each of 
         $A|_{\mathfrak n_+}$,  $A|_{\mathfrak n_+}$ is either complex linear or complex antilinear.  Since  $\mathfrak n_+$  and $\mathfrak n_-$   have nontrivial intersection,
           either both  $A|_{\mathfrak n_+}$,  $A|_{\mathfrak n_+}$ are  complex linear or both are complex antilinear.   Hence after possibly composing $A$ with the complex conjugation we   may assume $A$ is complex linear.   As   $A$ also satisfies 
            $A(\mathbb C X_{i, i+1})=\mathbb C X_{i, i+1}$  for all $i$, we see that $A=\text{Ad}_g$ for some $g=\text{diag}(\lambda_1, \cdots, \lambda_n)\in G_{\mathbb C}$. 
             This finishes the proof of (2).  
\end{proof}

\bigskip
The ``Hermitian product'' on $\mathbb H^n$  is defined by 
    $<z,w>=\sum_i\bar{z_i}w_i$ for $z=(z_i), w=(w_i)\in \mathbb H^n$.  
Then one can check by direct calculation that  $\overline{<z,w>}=<w,z>$ and $<A^*z, w>=<z, Aw>$ for $z,w\in \mathbb H^n$ and $A\in M_n(\mathbb H)$.  
For any $\mathbb H$-linear subspace (submodule)  $W$ of  $\mathbb H^n$,  the ``orthogonal complement'' $W^\perp$ is defined by $W^\perp=\{z\in \mathbb H^n| <z,w>=0\; \forall w\in W\}$.

Lemma \ref{lem_action_aut_g_on_f}     holds for $F=\mathbb C, \mathbb H$ where in (5) the  automorphism 
  $\Phi_0$ is induced  by  the map $A\mapsto (A^*)^{-1}$.   
  The proof of Lemma \ref{lem_action_aut_g_on_f} (5) goes through in the quaternion case since
$$<(g^{-1})^*W^+_j, gW^-_{n-j}>=<W^+_j, g^{-1}gW^-_{n-j}>=0.$$
Of course, the proof is also valid in the complex case if we use the standard Hermitian product in $\mathbb C^n$.   

Lemmas~\ref{lem_properties_fibration_between_flag_manifolds} and \ref{lem_dilation_dynamics}     hold   in the complex and quaternion cases without change.

\bigskip
\subsection{Changes needed for Section 3}\label{changes3}
 
 Lemma~\ref{lem_main_no_oscillation}    and 
  Corollary   \ref{cor_preservation_coset_foliation}
  hold   in the complex case for $n\ge 4$ and in the quaternion case for $n\ge 3$.  Note that the analog of Lemma~\ref{lem_main_no_oscillation}  in the $F=\C$ case fails when $n=3$, as in the real $n=3$ case.  We indicate the changes needed in the quaternion case below.  We skip the complex case since it is similar. Alternatively the complex case also follows from  Corollary 8.2 of  \cite{KMX1} as  by Lemma 
  \ref{graded-quaternion} (2)   
   every graded automorphism of  $\mathfrak n_{n, \mathbb C}$ is either complex linear or complex antilinear.  
 
 \begin{lemma}
\label{no_oscillation_quarternion}
Let $n\ge 3$. 
Let $U\subset N_{\mathbb H}$ be a connected open subset, and for some $p>\nu$ let $f:N_{\mathbb H}\supset U\ra N_{\mathbb H}$ be a $W^{1,p}_{\loc}$-mapping whose Pansu differential is an isomorphism almost everywhere.  Then after possibly composing with $\tau$, if necessary, 
for a.e. $x\in U$ the Pansu differential $D_Pf(x)$ preserves the subspace $\H X_{i,i+1}\subset V_1$ for every $1\leq i\leq n-1$.
\end{lemma}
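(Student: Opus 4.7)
The plan is to adapt the proof of Lemma~\ref{lem_main_no_oscillation} to the quaternionic setting, replacing each scalar left-invariant $1$-form by a $4$-form that captures an entire quaternionic entry. Fix a real basis $\{e_0,e_1,e_2,e_3\}=\{1,i,j,k\}$ of $\H$, let $\theta_{i,j}^m$ be the left-invariant $1$-form dual to $e_m X_{i,j}$, and set
$$
\Theta_{i,j}:=\theta_{i,j}^0\wedge\theta_{i,j}^1\wedge\theta_{i,j}^2\wedge\theta_{i,j}^3.
$$
Consider the ``row~$1$'' and ``column~$n$'' top forms
$$
\omega_+:=\Theta_{1,2}\wedge\cdots\wedge\Theta_{1,n},\qquad \omega_-:=\Theta_{1,n}\wedge\Theta_{2,n}\wedge\cdots\wedge\Theta_{n-1,n}.
$$
Both are closed and left-invariant by the same mechanism as in the real case: the bracket relation $[e_p X_{i,s},e_q X_{s,j}]=(e_pe_q)X_{i,j}$ forces $d\theta_{i,j}^m$ to be a sum of wedges $\theta_{i,s}^p\wedge\theta_{s,j}^q$ with $i<s<j$, and at least one of these two factors already appears in the wedges defining $\omega_\pm$.

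By Lemma~\ref{graded-quaternion}(2), every graded automorphism of $\fn_\H$ has the form $\tau^\epsilon\circ\hat h\circ\Ad_g$ with $g=\diag(\lambda_1,\ldots,\lambda_n)$. The factors $\hat h$ and $\Ad_g$ each preserve the subspace $\H X_{i,j}$ and act on it by an invertible real-linear transformation, so they multiply $\Theta_{i,j}$ by a nonzero real scalar, while $\tau$ interchanges $\omega_+$ and $\omega_-$ up to sign. Since the Pansu differential of $f$ is a graded isomorphism almost everywhere, we obtain the decomposition
$$
f_P^*(\omega_+)=u_+\omega_++u_-\omega_-
$$
a.e., with measurable functions $u_\pm$ such that $S_\pm:=\{u_\pm\neq 0\}$ satisfies $S_+\cup S_-$ of full measure and $S_+\cap S_-$ null.

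For each $2\leq k\leq n-1$ and $0\leq m\leq 3$ the test form $\eta^-_{k,m}$ is defined as the wedge of all $\theta_{i,j}^{m'}$ with $i\geq 2$ except $\theta_{k,k+1}^m$, with a symmetric form $\eta^+_{k,m}$ for $1\leq k\leq n-2$. Closedness of these forms follows from the same case analysis carried out for $\eta_{k-}$, $\eta_{k+}$ in the real $n\geq 5$ argument, and a direct count shows the degree and weight conditions of Theorem~\ref{co:pull_back2} are satisfied. Applying the Pullback Theorem yields
$$
X_{k,k+1}^m u_+=0\ \text{for}\ 2\leq k\leq n-1,\qquad X_{k,k+1}^m u_-=0\ \text{for}\ 1\leq k\leq n-2,
$$
where $X_{k,k+1}^m:=e_m X_{k,k+1}$. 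The identity $\chi_{S_+}+\chi_{S_-}=1$ a.e. then upgrades this to $X_{k,k+1}^m\chi_{S_+}=0$ for every $1\leq k\leq n-1$ and every $m$. Since $\{X_{k,k+1}^m\}$ spans $V_1$ and $V_1$ bracket-generates $\fn_\H$, connectedness of $U$ forces $\chi_{S_+}$ to be a.e.\ constant, which (after possibly composing with $\tau$) gives the lemma.

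The main obstacle is the $n=3$ case: here $\fn_\H$ is the quaternionic Heisenberg algebra, no margin is available in the weight inequality, and the real analog of the statement fails precisely because of the infinite-dimensional group of contactomorphisms of the classical Heisenberg group. The feature that rescues the quaternion $n=3$ case is that the center $V_2=\H X_{1,3}$ is $4$-dimensional rather than $1$-dimensional, so $\Theta_{1,3}$ is a genuine $4$-form on $V_2$ that can feed into $\omega_+$ and detect the graded automorphism type; the analogous construction in the real $n=3$ case reduces to a single degree-$1$ form on $V_2$ and fails to separate the two orbits of graded automorphisms.
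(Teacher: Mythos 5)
Your proposal is correct and follows the paper's own proof closely: the same forms $\omega_\pm$ built from the $4$-forms $\Theta_{i,j}$, the same invocation of Lemma~\ref{graded-quaternion}(2) to obtain the decomposition $f_P^*\omega_+=u_+\omega_++u_-\omega_-$ with a.e.\ disjoint supports, and the same family of test forms (the paper writes them as interior products $i_X\eta_\pm$ with $X\in\{X_{s,s+1},Y_{s,s+1},Z_{s,s+1},W_{s,s+1}\}$, which coincide up to sign with your explicit wedges $\eta^\pm_{k,m}$). The one place where the write-up is imprecise is the final heuristic paragraph: the reason the quaternionic $n=3$ case succeeds where the real Heisenberg case fails is not that $\Theta_{1,3}$ is a $4$-form rather than a $1$-form, but that Lemma~\ref{graded-quaternion}(2) constrains every graded automorphism of $\fn_{3,\H}$ to be, up to $\tau$, of the diagonal form $\hat h\circ\Ad_g$; by contrast the real Heisenberg algebra has $\gl(2,\R)$ as its graded automorphism group acting on $V_1$, so the pullback of $\omega_+$ need not decompose into terms supported on a.e.\ disjoint sets, and the argument never gets off the ground. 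Since the body of your proof invokes Lemma~\ref{graded-quaternion}(2) exactly where needed, this does not affect its correctness.
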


 

                 Here we indicate the differential forms used in the calculations, the rest of the argument being the same. 
             For $1\le s<t\le n$,        denote $Y_{st}=iX_{st}$, $Z_{st}=jX_{st}$,
               $W_{st}=kX_{st}$.   
                 Then $\{X_{st},   Y_{st},   Z_{st}, W_{st} |1\le s< t\le n\}$  form a basis of left invariant vector fields on $N$.   The only nontrivial bracket relations between the basis elements are given by (for $1\le s_1<s_2<s_3\le n$):
                 $$-[X_{s_1 s_2}, X_{s_2 s_3}]=[Y_{s_1 s_2}, Y_{s_2 s_3}]=[Z_{s_1 s_2}, Z_{s_2 s_3}]=[W_{s_1 s_2}, W_{s_2 s_3}]=-X_{s_1s_3}$$
                 $$[X_{s_1 s_2}, Y_{s_2 s_3}]=[Y_{s_1 s_2}, X_{s_2 s_3}]=[Z_{s_1 s_2}, W_{s_2 s_3}]=-[W_{s_1 s_2}, Z_{s_2 s_3}]=Y_{s_1s_3}$$
                 $$[X_{s_1 s_2}, Z_{s_2 s_3}]=[Z_{s_1 s_2}, X_{s_2 s_3}]=[W_{s_1 s_2}, Y_{s_2 s_3}]=-[Y_{s_1 s_2}, W_{s_2 s_3}]=Z_{s_1s_3}$$
                 $$[X_{s_1 s_2}, W_{s_2 s_3}]=[W_{s_1 s_2}, X_{s_2 s_3}]=[Y_{s_1 s_2}, Z_{s_2 s_3}]=-[Z_{s_1 s_2}, Y_{s_2 s_3}]=W_{s_1s_3}. $$
             Let $\alpha_{st}$, $\beta_{st}$, $\gamma_{st}$, $\eta_{st}$  be the dual basis of left invariant  $1$-forms.  We have 
             $$d\alpha_{s_1s_3}=\sum_{s_1<s_2<s_3}   (-\alpha_{s_1s_2}\wedge\alpha_{s_2s_3}+\beta_{s_1s_2}\wedge \beta_{s_2s_3}+\gamma_{s_1s_2}\wedge \gamma_{s_2s_3}+\eta_{s_1s_2}\wedge\eta_{s_2s_3} )  $$
              $$d\beta_{s_1s_3}=\sum_{s_1<s_2<s_3}   (-\alpha_{s_1s_2}\wedge\beta_{s_2s_3}-\beta_{s_1s_2}\wedge \alpha_{s_2s_3}-\gamma_{s_1s_2}\wedge \eta_{s_2s_3}+\eta_{s_1s_2}\wedge\gamma_{s_2s_3} )  $$
              $$d\gamma_{s_1s_3}=\sum_{s_1<s_2<s_3}   (-\alpha_{s_1s_2}\wedge\gamma_{s_2s_3}-\gamma_{s_1s_2}\wedge \alpha_{s_2s_3}-\eta_{s_1s_2}\wedge \beta_{s_2s_3}+\beta_{s_1s_2}\wedge\eta_{s_2s_3} )  $$
              $$d\eta_{s_1s_3}=\sum_{s_1<s_2<s_3}   (-\alpha_{s_1s_2}\wedge\eta_{s_2s_3}-\eta_{s_1s_2}\wedge \alpha_{s_2s_3}-\beta_{s_1s_2}\wedge \gamma_{s_2s_3}+\gamma_{s_1s_2}\wedge\beta_{s_2s_3} ).  $$
             We pull back the following closed left invariant forms
             $$\omega_+=\bigwedge_{2\le s\le n} (\alpha_{1s}\wedge\beta_{1s}\wedge\gamma_{1s}\wedge\eta_{1s})$$
             $$\omega_-=\bigwedge_{1\le s\le n-1} (\alpha_{sn}\wedge\beta_{sn}\wedge\gamma_{sn}\wedge\eta_{sn}).$$
             By Lemma \ref{graded-quaternion} on graded automorphisms,  the pull-back has the form $f_P^*\omega_+=u_+\omega_++u_-\omega_-$ as before. 
              Let 
              $$\eta_-=\bigwedge_{2\le s<t\le n} (\alpha_{st}\wedge\beta_{st}\wedge
              \gamma_{st}\wedge\eta_{st})$$
              $$\eta_+=\bigwedge_{1\le s<t\le n-1} (\alpha_{st}\wedge\beta_{st}\wedge
              \gamma_{st}\wedge\eta_{st}).$$
             We apply the  pull-back theorem  to 
             $f_P^*\omega_+$ and $\eta=i_X\eta_-$,  $i_X\eta_+$, where 
             $X\in\{X_{s (s+1)}, Y_{s(s+1)},  Z_{s(s+1)},  W_{s(s+1)}\}$ and $i_X$ denotes the interior product with respect to $X$.  
              As before, this yields $Xu_+=0$ for 
              $X\in\{X_{s (s+1)}, Y_{s(s+1)},  Z_{s(s+1)},  W_{s(s+1)}\}$ with $2\le s\le n-1$   and $Xu_-=0$ for 
              $X\in\{X_{s (s+1)}, Y_{s(s+1)},  Z_{s(s+1)},  W_{s(s+1)}\}$ with $1\le s\le n-2$.  The rest of the argument is the same as in the real case.


\bigskip
\subsection{Changes needed for Section 4}\label{changes4}         
                 

In the quaternion case, we note that the action of  $PGL(n, \mathbb H)$   on the projective frames is still transitive, but is no longer free.  The reason is that   $a I_n$ defines a nontrivial element in 
$PGL(n, \mathbb H)$  for $a\in \mathbb H\setminus \mathbb R$,  but fixes the standard projective frame.

Below is a version of Lemma \ref{lem_special_affine} for the quaternion case. 
   A similar statement holds for the complex case.   A line in $\mathbb H^2$ is a subset of the form $\{(a_1, a_2)x+(b_1, b_2)|x\in \mathbb H\}$ for some $(b_1, b_2)\in \mathbb H^2$,
    $(0,0)\not=(a_1, a_2)\in \mathbb H^2$.  This line is said to be parallel to $(a_1, a_2)=a_1e_1+a_2e_2$.

  \begin{lemma}
\label{lem_special_affine.quaternion}
Suppose $U\subset \mathbb H^2$ is a connected open subset, and $$\phi =(\phi_1,\phi_2):\mathbb H^2\supset U\ra\mathbb H^2$$ is a continuous map such that for every $v\in \{e_1,e_2,e_1+e_2\}$ and every line $L$ parallel to $v$, the image of $L\cap U$ is contained in a line parallel to $v$.   Assume further that for each $q\in \{i, j, k\}$, lines parallel to 
$e_1+qe_2$ are mapped into   lines (not necessarily parallel to  $e_1+qe_2$).   
Then $\phi$ is  of the form 
\begin{equation}
\phi(x,y)=(a h(x)+b_1, a  h(y)+b_2)
\end{equation}
  where  $a, b_1,b_2\in \mathbb H$  and $h: \mathbb H\ra \mathbb H$  is either an automorphism of $\mathbb H$ or  the zero map.

\end{lemma}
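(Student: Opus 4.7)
The plan is to parallel the proof of Lemma~\ref{lem_special_affine}, modified to accommodate the noncommutativity of $\H$. The conclusion differs from the real case in that we cannot expect $\phi$ to be an $\R$-scalar multiple of the identity on each factor; rather, we must allow an automorphism of $\H$ to absorb the interaction between left and right multiplication.

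\textbf{Separation and additivity.} Applying the hypothesis with $v = e_1$ and $v = e_2$ shows that $\phi_1$ depends only on $x$ and $\phi_2$ only on $y$, so $\phi(x, y) = (\phi_1(x), \phi_2(y))$. Applying the hypothesis with $v = e_1 + e_2$ (where image lines are required to be parallel to $e_1 + e_2$) yields
\[
\phi_1(x+t) - \phi_1(x) = \phi_2(y+t) - \phi_2(y)
\]
whenever both sides are defined. The right side is independent of $x$, so there exist a continuous locally additive function $H:\H\to\H$ and constants $b_1,b_2\in\H$ with $\phi_i(z) = H(z) + b_i$; after translation we normalize $H(0)=0$.

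\textbf{Twisted homogeneity from $e_1+qe_2$ lines.} Fix $q\in\{i,j,k\}$. A line parallel to $e_1+qe_2$ has the form $L_c = \{(t, qt+c) : t\in\H\}$, and its image
\[
\phi(L_c\cap U) = \{(H(t)+b_1,\ H(qt)+H(c)+b_2)\ :\ (t,qt+c)\in U\}
\]
is contained in an affine line in $\H^2$. After subtracting the value at a basepoint, $\{(H(t), H(qt))\}$ lies on a line through the origin, which is of the form $\{(\alpha_1 s,\alpha_2 s):s\in\H\}$ for some $(\alpha_1,\alpha_2)\neq(0,0)$. Excluding the degenerate case $H\equiv 0$ (which yields the ``$h=0$'' alternative), we can choose the basepoint so that $\alpha_1\neq 0$, obtaining
\[
H(qt) = \lambda_q H(t), \qquad \lambda_q := \alpha_2\alpha_1^{-1},
\]
first on an open set of $t$; additivity of $H$ and connectedness of $U$ propagate this identity to all $t\in\H$.

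\textbf{Admissibility and the automorphism.} Let $a := H(1)\neq 0$. Setting $t=1$ gives $H(q) = \lambda_q a$. Additivity combined with $q^2 = -1$ yields $H(-1) = -a$, while also $H(q^2) = \lambda_q H(q) = \lambda_q^2 a$; hence $\lambda_q^2 = -1$. From $H(ij) = H(k) = \lambda_k a$ and $H(ij) = \lambda_i H(j) = \lambda_i\lambda_j a$ we get $\lambda_i\lambda_j = \lambda_k$. Define
\[
h(x) := a^{-1}H(x), \qquad \mu_q := a^{-1}\lambda_q a,
\]
so that $h(1)=1$ and $h(q) = \mu_q$ for $q\in\{i,j,k\}$. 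Conjugation preserves the defining identities, so $\mu_q^2 = -1$ and $\mu_i\mu_j = \mu_k$; hence $(\mu_i,\mu_j,\mu_k)$ is an admissible triple in the sense preceding Theorem~\ref{diedo}. Since additive and continuous maps $\H\simeq\R^4\to\H$ are $\R$-linear, $h$ is determined by its values on $\{1,i,j,k\}$ and coincides with the automorphism $h_{\mu_i,\mu_j,\mu_k}$. Therefore $\phi(x,y) = (ah(x)+b_1,\ ah(y)+b_2)$, as required.

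\textbf{Main obstacle.} The principal subtlety is the noncommutativity: we naturally arrive at $H(q) = \lambda_q a$ with $\lambda_q$ on the \emph{left} of $a$, whereas the conclusion demands $H(x) = ah(x)$ with $a$ on the \emph{left}. The reconciliation is that conjugation by $a$ converts $\lambda_q$ into $\mu_q$, and the quaternionic identities $\lambda_q^2 = -1$ and $\lambda_i\lambda_j = \lambda_k$ are preserved under this conjugation; this is precisely what allows an \emph{automorphism} (rather than a mere $\R$-linear scaling) to appear in the answer, consistent with the classification of $\aut(\H)$ in Section~\ref{changes2}. The remaining points---propagation of local identities over $U$ via connectedness, and handling the degenerate $H\equiv 0$ branch---are routine.
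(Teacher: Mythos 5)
Your proof is correct and follows essentially the same route as the paper: reduce to $\phi(x,y)=(H(x)+b_1,H(y)+b_2)$ with $H$ real-linear via the $e_1,e_2,e_1+e_2$ lines, then use the $e_1+qe_2$ lines to produce scalars with $\lambda_q^2=-1$, $\lambda_i\lambda_j=\lambda_k$ and identify $H=a\,h$ for an automorphism $h$ of $\H$. The only cosmetic difference is bookkeeping: you extract $\lambda_q$ on the \emph{left} of $H(1)$ and then conjugate by $a=H(1)$ to obtain the automorphism, whereas the paper reads off the \emph{right} multiplier directly from $\phi(i,-1)=\phi(1,i)\lambda$ since $(i,-1)=(1,i)\cdot i$ in the right-$\H$-module structure; the two are conjugate and yield the same $a\,h_{\lambda,\mu,\nu}$.
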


\begin{proof}
  The argument of Lemma \ref{lem_special_affine}  yields that 
  $\phi$ is  of the form 
  $\phi(x,y)=(m(x)+b_1, m(y)+b_2)$,  where 
 $b_1,b_2\in \mathbb H$  and $m:\mathbb H\ra \mathbb H$ is a real linear map.
  We shall show that  either $m$ is the zero map or there is some automorphism $h$ of $\mathbb H$ and some
 $a\in \mathbb H$ such that $m(x)=a h(x)$.   We may assume $b_1=b_2=0$ after possibly composing $\phi$ with a translation.  Then the assumption implies that the line $(1,i)\mathbb H$ is mapped 
 by $\phi$ into a line $(a_1, a_2)\mathbb H$ (at least one of $a_1, a_2$ is nonzero) through the origin.  There are $t_1, t_2\in \mathbb H$ such that 
  $(m(1), m(i))=\phi(1,i)=(a_1t_1, a_2t_1)$ and $(m(i), m(-1))=\phi(i, -1)=(a_1t_2, a_2t_2)$.  By comparing the components we get $m(i)=a_2t_1=a_1t_2$, $m(1)=a_1t_1=-a_2t_2$.  Suppose $m(1)=0$. As  at least one of $a_1, a_2$ is nonzero we have  $t_1=0$ or $t_2=0$, which implies $m(i)=0$. Similarly $m(j)=m(k)=0$. In this case $m$ is the zero map. 
 
 Now we assume $m(1)\not=0$.  
  Then    there is some $\lambda\in \mathbb H$ such that
  $\phi(i, -1)=\phi(1,i)\lambda$. By comparing the two components of both sides we get
   $m(i)=m(1)\lambda$ and $-m(1)=m(-1)=m(i)\lambda$, which yields
    $\lambda^2=-1$.   Similarly by considering the lines $(1,j)\mathbb H$ and 
    $(1,k)\mathbb H$  we  see that there are $\mu$ and $\nu$ satisfying $\mu^2=\nu^2=-1$ such that  $m(j)=m(1)\mu$,   $-m(1)=m(-1)=m(j)\mu$, 
     $m(k)=m(1)\nu$ and $-m(1)=m(-1)=m(k)\nu$.

     Since $(j,k)=(1,i)j\in (1,i)\mathbb H$, there is some $c\in \mathbb H$ such that
   $\phi(j,k)=\phi(1,i) c$.   This gives us $m(j)= m(1) c$ and $m(k)=m(i)c$. As we also have  $m(i)=m(1)\lambda$,  
    $m(j)=m(1)\mu$  and $m(k)=m(1)\nu$,  we conclude $c=\mu$ and $\nu=\lambda \mu$.    The three numbers $\lambda, \mu,\nu$ satisfy $\lambda^2=\mu^2=\nu^2=-1$ and $\nu=\lambda\mu$.
     As $m$ is $\mathbb R$-linear, for any $x=x_0+ix_1+jx_2+kx_3$ ($x_i\in \mathbb R$) we get $m(x)=m(1) h_{\lambda, \mu, \nu}(x)$.

   \end{proof}

     The arguments in Section \ref{sec_rigidity_foliation_preserving_maps} show that we may assume  the map $\phi$  sends 
      lines parallel to $v\in \{e_1, e_2, e_1+e_2\}$ 
        into lines parallel to $v$.  We claim that for any $q\in \{i,j,k\}$, lines parallel to $e_1+qe_2$ are mapped by $\phi$ into a family of parallel lines (not necessarily parallel to  $e_1+qe_2$).  To see this, we notice that for a suitable diagonal matrix $g=\text{diag}(a_1, a_2, 1)\in GL(3, \mathbb H)$, $g\circ \hat{f}_1$ satisfy $g\circ \hat{f}_1(\text{span}(e_1+qe_2+e_3))=\text{span}(e_1+qe_2+e_3)$  
     and $g\circ \hat{f}_1(\hat{W}^i_1)=\hat{W}^i_1$ for $i=1,2,3$. 
     Since  $\H^3$ is a right $\H$ module, here 
      for any $(x_1, x_2, x_3)\in \H^3$,  $\Span(x_1,x_2, x_3)=\{(x_1x, x_2x, x_3x)|x\in \H\}$.  
     Then  it follows that
    $\bar g\circ \phi$ sends lines parallel to $e_1+qe_2$ to lines parallel to 
$e_1+qe_2$, where $\bar{g}:\mathbb H^2\ra \mathbb H^2$  is the linear map given  by the diagonal matrix $\text{diag}(a_1, a_2)$. 
  Consequently $\phi$ sends lines parallel to $e_1+qe_2$ into a family of parallel lines.

\bigskip
\begin{proof}[Proof  of   Counterpart of Lemma \ref{thm_fiber_preserving_projective}   in the case $n=3$, $F=\mathbb H$].   
    Only the last paragraph   and the third last paragraph of the proof of Lemma \ref{thm_fiber_preserving_projective}
     need some changes.  
      As before we have  $\hat f_1([e_3])=[e_3]$. Hence  
for small $r>0$ we may define $\phi:\H^2\supset B(0,r)\ra \H^2$ by $\Span(\phi(x_1,x_2),1)=\hat f_1(\Span(x_1,x_2,1))$. The fact
  $\hat f_1([e_3])=[e_3]$  implies $\phi(0,0)=(0,0)$.   

      By Lemma \ref{lem_special_affine.quaternion}
     the map $\phi: \mathbb H^2\supset B(0,r) \ra \mathbb H^2$  in this case has the form
$\phi(x_1,  x_2)=(a h(x_1)+b_1, a  h(x_2)+b_2)$
  where  $a, b_1,b_2\in \mathbb H$  and $h: \mathbb H\ra \mathbb H$  is either an automorphism of $\mathbb H$ or  the zero map. 
   As $\phi(0,0)=(0,0)$, we have  $b_1=b_2=0$ and so 
     $\phi(x_1,  x_2)=(a h(x_1), a  h(x_2))$.   
   The argument for $m\not=0$ applies here and shows that $h$ is an automorphism.  Since any automorphism of $H$ is inner, 
   $h(x)=bxb^{-1}$   for some $0\not= b\in \mathbb H$  and so 
   $\phi(x_1,  x_2)=(a bx_1b^{-1}, a bx_2b^{-1})$.
Let $\hat g:=\diag(ab,  ab,  b)\in\pgl(3,\H)$.  Then $\hat g_1:\f_1\ra\f_1$ agrees with $\hat f_1$ near $[e_3]$.  Since $\hat g$ and $\hat f$ are both fibration-preserving, they agree near $x=(W_1^-,W_2^-)$. 
\end{proof}

\bigskip
\subsection{Changes needed for Section 5}\label{changes5}  

The counterpart of Theorem \ref{thm_main}  for the complex and quaternion cases is: 

\begin{theorem}\label{thm_mainF}
  Let $U\subset \f_F$ be a connected open subset, and $f:U\ra \f_F$ be a $W^{1,p}_{\loc}$-mapping for $p>\nu$, such that the Pansu differential is an isomorphism almost everywhere. 
\ben
\item   If $F=\mathbb C$ and $n\ge 4$, then   $f$ is the restriction of a diffeomorphism $\f\ra \f$ of the form $\psi^{\eps_1}\circ  \mathcal{C}^{\eps_2} \circ g$  where $g\in \pgl(n,\C)$, $\eps_i\in \{0,1\}$ and $\mathcal C$ is complex conjugation. \newline
\item    If $F=\mathbb H$ and $n\ge 3$, then   $f$ is the restriction of a diffeomorphism $\f\ra \f$ of the form $\psi^{\eps} \circ g$ where $g\in \pgl(n,\H)$, $\eps\in \{0,1\}$.
\een
\end{theorem}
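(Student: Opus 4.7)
The plan is to mimic the proof of Theorem \ref{thm_main} from Section \ref{sec_proof_thm_main}, substituting the complex/quaternionic counterparts of the main lemmas developed in Subsections \ref{changes2}--\ref{changes4}. As before, for each $x\in U$ choose a connected neighborhood $U_x\subset U$ and group elements $\bar g_x,\bar g_x'\in G_F$ so that $\bar g_x(U_x),\bar g_x'(f(U_x))\subset \hat N_F$, and let $\hat f_x := \bar g_x'\circ f\circ \bar g_x^{-1}$ and $f_x := \al^{-1}\circ\hat f_x\circ\al : V_x\to V_x'\subset N_F$. Then $f_x$ is a $W^{1,p}_{\loc}$-map with a.e.\ nondegenerate Pansu differential.

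By Lemma \ref{no_oscillation_quarternion} in the quaternionic case (and its complex analog, which also follows from \cite[Corollary 8.2]{KMX1} together with Lemma \ref{graded-quaternion}(2)), after possibly composing with $\tau$ the Pansu differential of $f_x$ preserves each $F X_{i,i+1}\subset V_1$ almost everywhere. By \cite[Lemma 2.30]{KMX2} the map $\tau^{\eps_x}\circ f_x$ locally preserves the coset foliations of the subgroups $K_j\subset N_F$ for every $1\leq j\leq n-1$, exactly as in Corollary \ref{cor_preservation_coset_foliation}. Applying Lemma \ref{lem_properties_fibration_between_flag_manifolds}(4), valid verbatim in both cases, the conjugated map $\rho(\tau^{\eps_x})\circ\hat f_x$ is locally fibration-preserving with respect to each $\pi_j : \f_F\to \f_{j,F}$.

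Next I would invoke the counterpart of Proposition \ref{thm_fiber_preserving_projective} in the complex/quaternionic settings, which is proved along the same lines as the real case except that in the final step Lemma \ref{lem_special_affine.quaternion} replaces Lemma \ref{lem_special_affine}. The new feature is that the limiting map $\phi$ now has the form $\phi(x,y)=(a h(x), a h(y))$ for some $a\in F$ and some $h\in\aut(F)$, rather than being a homothety; this is what forces the conclusion to land in $\aut(G_F)$ rather than in $G_F$ itself. It follows that $\rho(\tau^{\eps_x})\circ\hat f_x$ coincides near $x$ with $\rho(\Phi_x')$ for some $\Phi_x'\in\aut(G_F)$, whence $f=\rho(\Phi_x)\restr_{U_x}$ for some $\Phi_x\in\aut(G_F)$. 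Since $\Phi_x$ is locally constant in $x$ and $U$ is connected, we obtain a single $\Phi\in\aut(G_F)$ with $f=\rho(\Phi)\restr_U$.

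Finally, applying Theorem \ref{diedo} and Lemma \ref{lem_action_aut_g_on_f}(5): in the complex case we may write $\Phi=\Phi_0^{\eps_1}\circ \hat{\mathcal{C}}{}^{\eps_2}\circ I_g$ and thus $\rho(\Phi)=\psi^{\eps_1}\circ \mathcal{C}^{\eps_2}\circ g$; in the quaternionic case Skolem--Noether guarantees that every $\hat h$ with $h\in \aut(\H)$ is already inner in $G_\H$ (as noted in Subsection \ref{changes2}), so $\Phi=\Phi_0^\eps\circ I_g$ and $\rho(\Phi)=\psi^\eps\circ g$. The main obstacle I anticipate is the quaternionic version of Proposition \ref{thm_fiber_preserving_projective} (equivalently, Lemma \ref{lem_special_affine.quaternion}), since the noncommutativity of $\H$ requires careful bookkeeping of left versus right scalar multiplication in the projective-frame identification, and the extra automorphism $h$ produced by the argument must be absorbed using an appropriate inner conjugation in $G_\H$ in order to match the conclusion $\psi^\eps\circ g$.
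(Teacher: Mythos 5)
Your proposal correctly reconstructs the paper's argument: the paper itself simply defers to the proof of Theorem~\ref{thm_main} "with composition by complex conjugation in the complex case if necessary," and your outline fills in the same chain of substitutions that Sections~\ref{changes2}--\ref{changes4} prepare (Lemma~\ref{no_oscillation_quarternion} and its complex analog, Lemma~\ref{graded-quaternion}, the counterpart of Proposition~\ref{thm_fiber_preserving_projective} via Lemma~\ref{lem_special_affine.quaternion}, and finally Theorem~\ref{diedo} together with Lemma~\ref{lem_action_aut_g_on_f}(5) to decompose $\Phi$). The only slight imprecision is the remark that the extra automorphism $h\in\aut(F)$ "forces the conclusion to land in $\aut(G_F)$ rather than $G_F$" -- in the quaternionic case $h$ is inner by Skolem--Noether so the local conclusion still lies in $\pgl(n,\H)$, while the genuine new phenomenon occurs only for $F=\C$ with $h=\mathcal{C}$ -- but you correct this yourself in the final paragraph, so the argument stands.
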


The proof of Theorem \ref{thm_mainF} is the same as that of Theorem \ref{thm_main} except in the complex case  where we may need to   compose $f$ with the complex conjugation if necessary.

\bigskip
\section{Global quasiconformal homeomorphisms}
\label{sec_global_qc_homeos}

In this section we identify all global nondegenerate Sobolev maps  $N\ra N$.  These are exactly the graded affine maps of $N$.  This result  is an immediate consequence of  
 Theorem \ref{thm_main}.

An affine map of a Lie group  $G$  is a   map of the form $L_g\circ \phi$, where $\phi$ is an automorphism of $G$ and $L_g$ is left translation by $g\in G$.   A graded affine map of a Carnot group $N$   is an affine map where the automorphism is a graded automorphism of  $N$.  

The following result  applies to global quasiconformal homeomorphisms since quasiconformal maps are nondegenerate Sobolev maps.   

  \begin{theorem}\label{rigidity global qc}~
   Let $N$ be the Iwasawa group of  $\text{GL}(n, F)$,  with $n\ge 4$ for $F=\mathbb R, \mathbb C$ and $n\ge 3$ for $F=\mathbb H$.   
   Suppose  $f:N\ra N$ is  a $W^{1,p}_{\loc}$-mapping for $p>\nu$, such that the Pansu differential is an isomorphism almost everywhere.  Then $f$ is  a graded affine map of $N$.
         \end{theorem}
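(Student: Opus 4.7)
The plan is to derive Theorem \ref{rigidity global qc} as a direct consequence of Theorem \ref{thm_main} (for $F=\R$) and Theorem \ref{thm_mainF} (for $F\in\{\C,\H\}$), by analyzing the constraint that $f(N)\subset N$.

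I would first use the $N$-equivariant embedding $\al:N\to\hat N\subset\f_F$ of Lemma \ref{lem_properties_of_f}(\ref{item_orbit_map_n}) to translate $f$ into a map $\hat f:=\al\circ f\circ \al^{-1}:\hat N\to\hat N$ satisfying the hypotheses of Theorem \ref{thm_main} (or Theorem \ref{thm_mainF}). That theorem produces $\eps\in\{0,1\}$ and $g\in G_F$ (together with an optional factor $\mathcal{C}^{\eps_2}$ of complex conjugation when $F=\C$) such that $\hat f$ is the restriction to $\hat N$ of the globally defined diffeomorphism $\Phi:=\psi^\eps\circ\mathcal{C}^{\eps_2}\circ g$ of $\f_F$.

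The central step is to classify those $\Phi$ of this form with $\Phi(\hat N)\subset\hat N$. Writing $\f_F\setminus\hat N=\bigcup_{j=1}^{n-1}\Si_j$ where $\Si_j=\{(W_i):W_j\cap W_{n-j}^+\neq 0\}$ is an irreducible Schubert divisor of codimension one, the inclusion $\Phi(\hat N)\subset\hat N$ is equivalent (via the biregularity of $\Phi$) to $\f_F\setminus\hat N\subset\Phi(\f_F\setminus\hat N)$. Comparing irreducible components of equal codimension forces $\Phi$ to permute the $\Si_j$; computing $\Phi(\Si_j)$ explicitly yields the constraint $gW_j^+=W_j^+$ for all $j$ (so $g\in P^+$) when $\eps=0$, and $gW_j^+=W_j^-$ for all $j$ (so $g\in \Pi P^+$, with $\Pi$ the antidiagonal permutation matrix) when $\eps=1$. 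The complex conjugation factor contributes trivially here, since it preserves the standard flag.

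Finally, I would verify that each allowed $(\eps,g)$ yields a graded affine map of $N$. When $\eps=0$, writing $g=n_0a$ with $n_0\in N$ and $a$ diagonal, the induced map is $n\mapsto n_0\cdot(ana^{-1})$, a left translation composed with the graded automorphism $I_a|_N$ (see Lemma \ref{lem_properites_aut_gr_n}). When $\eps=1$, writing $g=\Pi h$ with $h\in P^+$, the identity $\psi\circ\ell(\Pi)=\rho(\tau)$ (which follows from $\tau=\Phi_0\circ I_\Pi$ in Lemma \ref{lem_properties_aut_g}(2) together with Lemma \ref{lem_action_aut_g_on_f}(2) and the relations $\tau(P)=P$, $\Phi_0(P)=\Pi P\Pi^{-1}$) shows that $\Phi$ induces on $N$ the composition of the graded automorphism $\tau$ with a graded affine map, again graded affine. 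The complex conjugation factor, when present, is itself a graded automorphism of $N_\C$ and is absorbed in the same way. The main technical obstacle is the Schubert divisor argument of the classification step, particularly the translation from the permutation of $\{\Si_j\}$ into the stated flag-preservation constraint on $g$.
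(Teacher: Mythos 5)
Your proposal is correct and deduces the theorem from Theorem~\ref{thm_main} (resp.\ Theorem~\ref{thm_mainF}) just as the paper does, but the key classification step is genuinely different. The paper first left-translates so that $f(0)=0$, which forces $g\in P^-$, then normalizes the diagonal so $g$ is unipotent lower triangular, and finally shows $g=\id$ by exhibiting, whenever $g\neq\id$, an explicit flag $F\in\hat N$ with $g(F)\notin\hat N$. You skip the normalization entirely: writing $\f_F\setminus\hat N=\bigcup_j\Si_j$ as a union of codimension-one Schubert divisors (via the characterization of $\hat N$ in Lemma~\ref{lem_properties_of_f}(\ref{item_orbit_map_n})) and using that the biregular map $\Phi=\psi^{\eps}\circ\mathcal{C}^{\eps_2}\circ g$ satisfies $\f_F\setminus\hat N\subset\Phi(\f_F\setminus\hat N)$, you deduce that $\Phi$ permutes the $\Si_j$ and hence that $g$ preserves (or antidiagonally reverses) the standard flag, i.e.\ $g\in P^+$ when $\eps=0$ and $g\in\Pi P^+$ when $\eps=1$. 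The verification that such $g$ yield graded affine maps of $N$ (decomposing $g=n_0a$ with $n_0\in N$, $a$ diagonal, and invoking Lemma~\ref{lem_properites_aut_gr_n} and the identity $\rho(\tau)=\ell(\Pi)\circ\psi=\psi\circ\ell(\Pi)$) is correct. Your computations of $\Phi(\Si_j)$ in the two cases $\eps=0,1$ come out as you say, and the orthogonal-complement duality step for $\eps=1$ works because $\Pi$ is orthogonal so $\psi$ and $\ell(\Pi)$ commute.

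What each approach buys: the paper's explicit flag construction is elementary and self-contained, requiring no algebraic geometry. Your Schubert divisor argument is more conceptual and avoids exhibiting the flag $F$ in coordinates, but you should be a little careful with irreducibility in the real category (a real variety need not be irreducible even if its complexification is). Here the needed facts do hold — each $\Si_j$ is the closure of a single connected Schubert cell, the pairwise intersections $\Si_j\cap\Si_k$ have codimension $\geq2$, and the connected dense open top cell of $\Si_j$ is not contained in any $\Si_k$ with $k\neq j$ — so the permutation argument goes through; but this deserves a sentence of justification. Alternatively, one can bypass irreducibility entirely by arguing elementarily: if $g^{-1}W_{n-j}^+\neq W_{n-j}^+$ for some $j$, pick $v\in g^{-1}W_{n-j}^+\setminus W_{n-j}^+$, extend $\R v$ (or $Fv$) to a $j$-dimensional subspace transverse to $W_{n-j}^+$, and extend to a full flag in $\hat N$; this flag is mapped outside $\hat N$ by $g$, giving the same constraint without any appeal to irreducibility. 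That variant is closer in spirit to the paper's explicit construction but more structural.
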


\begin{proof}  
Let $f$ be as above.   We shall show that there is a graded affine map $f_0$ such that
  $f_0^{-1}\circ f$ is the identity map.  
  After replacing $f$ with $L_{f(0)^{-1}}\circ f$ we may    assume  $f(0)=0$. 
   We identify  $N$ with  $\hat N$ and view $f$ as a map $\mathcal F\supset \hat N\ra \hat N\subset \mathcal F$.

  We first consider the case $F=\mathbb R$. 
 By  Theorem \ref{thm_main},    $f$ is the restriction to $\hat N$  of  a map  of the form 
 $\psi^{\eps}\circ g$ where $g\in \gl(n,\R)$, $\eps\in \{0,1\}$. 
 Recall that  the automorphism $\tau=I_{\Pi}\circ \Phi_0$ of $\text{GL}(n, \mathbb R)$ 
  induces a graded automorphism (again denoted by $\tau$) of $N=\hat N$ and acts on $\mathcal F$ as $\Pi\circ \psi$.   By  replacing $f$ with $\tau\circ f$ if necessary (when $\epsilon=1$),  
  we may assume $\eps=0$  and so $f=g|_{\hat N}$.   
    This implies $g\in P^-$ as the stabilizer of $0$ is $P^-$.   So $g=(g_{ij})$ 
  is a lower triangular matrix. 
   Now we can further assume that the entries on the diagonal of $g$ are $1$, after replacing $g$ with $D^{-1}g$, where $D=\text{diag}(g_{11}, \cdots, g_{nn})$.
  So now $g$ is a lower triangular matrix with $1$s on the diagonal and such that 
   $g(\hat N)=\hat N$.  We next show that $g=I_n$.

    Suppose $g\not=I_n$.    We shall find a flag $F\in \hat N$ such that $g(F)\notin \hat N$,
     contradicting  $g(\hat N)=\hat N$.  Let $1\le k\le n-1$  be the integer such that 
     $g_{ij}=0$ for all  $i>j>k$  and $g_{jk}\not=0$ for some  $j>k$.     Let $j_0>k$ be such that  $g_{j_0k}\not=0$ and $g_{jk}=0$ for all $j>j_0$.    Denote
      $v_{j_0}=e_k-\sum_{j=k+1}^{j_0}  g_{jk} e_j$.
     Let $F=\{W_j\}$ be the flag defined by   $W_j=W^-_j$ for $1\le j\le n-j_0$ and $n-k<j\le n$, $W_{n-j_0+1}=\text{span}\{e_n, \cdots, e_{j_0+1}, v_{j_0}\}$ and $W_j=\text{span}\{e_n, \cdots, e_{j_0+1}, v_{j_0}, e_{j_0-1}, \cdots, e_{n-j+1}\}$ for $n-j_0+2\le j\le n-k$.
     It is straightforward to check that $F\in \hat N$.  However, 
         $g(v_{j_0})=e_k$  
           and 
        $g(W_{n-j_0+1})\cap W^+_{j_0-1}$ contains the nontrivial element  $e_k$ and so  $g(F)\notin \hat N$.   
    
The proof in the complex and quaternion cases are the same except we use Theorem \ref{thm_mainF} and in the complex case we may need to compose $f$ with the complex conjugation.

\end{proof}

\bibliography{product_quotient}

\newcommand{\etalchar}[1]{$^{#1}$}
\providecommand{\bysame}{\leavevmode\hbox to3em{\hrulefill}\thinspace}
\providecommand{\MR}{\relax\ifhmode\unskip\space\fi MR }
\providecommand{\MRhref}[2]{%
  \href{http://www.ams.org/mathscinet-getitem?mr=#1}{#2}
}
\providecommand{\href}[2]{#2}
\begin{thebibliography}{CDMKR05}

\bibitem[BV19]{buckmaster_vicol19}
T.~Buckmaster and V.~Vicol, \emph{Nonuniqueness of weak solutions to the
  {N}avier-{S}tokes equation}, Ann. of Math. (2) \textbf{189} (2019), no.~1,
  101--144. \MR{3898708}

\bibitem[Cap99]{capogna}
L.~Capogna, \emph{Regularity for quasilinear equations and {$1$}-quasiconformal
  maps in {C}arnot groups}, Math. Ann. \textbf{313} (1999), no.~2, 263--295.
  \MR{1679786}

\bibitem[Car04]{cartan_1904}
\'E Cartan, \emph{Sur la structure des groupes infinis de transformation}, Ann.
  Sci. \'{E}cole Norm. Sup. (3) \textbf{21} (1904), 153--206. \MR{1509040}

\bibitem[CC06]{capogna_cowling}
L.~Capogna and M.~Cowling, \emph{Conformality and {$Q$}-harmonicity in {C}arnot
  groups}, Duke Math. J. \textbf{135} (2006), no.~3, 455--479. \MR{2272973}

\bibitem[CDMKR02]{cowling_et_al_iwasawa_n_groups}
M.~Cowling, F.~De~Mari, A.~Kor\'{a}nyi, and H.~M. Reimann, \emph{Contact and
  conformal maps on {I}wasawa {$N$} groups}, vol.~13, 2002, Harmonic analysis
  on complex homogeneous domains and Lie groups (Rome, 2001), pp.~219--232.
  \MR{1984102}

\bibitem[CDMKR05]{contact_conformal_maps_parabolic_geometry}
M.~Cowling, F.~De~Mari, F.~Kor\'{a}nyi, and H.~M. Reimann, \emph{Contact and
  conformal maps in parabolic geometry. {I}}, Geom. Dedicata \textbf{111}
  (2005), 65--86. \MR{2155176}

\bibitem[Che99]{cheeger_differentiability}
J.~Cheeger, \emph{Differentiability of {L}ipschitz functions on metric measure
  spaces}, Geom. Funct. Anal. \textbf{9} (1999), no.~3, 428--517. \MR{1708448}

\bibitem[Cow11]{cowling_calclulus_nilpotent_groups}
Michael~G. Cowling, \emph{Calculus on nilpotent {L}ie groups}, Geometry,
  rigidity, and group actions, Chicago Lectures in Math., Univ. Chicago Press,
  Chicago, IL, 2011, pp.~285--295. \MR{2807835}

\bibitem[Die51]{dieudonne}
J.~Dieudonn\'{e}, \emph{On the automorphisms of the classical groups. {W}ith a
  supplement by {L}oo-{K}eng {H}ua}, Mem. Amer. Math. Soc. \textbf{2} (1951),
  vi+122. \MR{45125}

\bibitem[DLS09]{delellis_szekelyhidi09}
C.~De~Lellis and L.~Sz\'{e}kelyhidi, Jr., \emph{The {E}uler equations as a
  differential inclusion}, Ann. of Math. (2) \textbf{170} (2009), no.~3,
  1417--1436. \MR{2600877}

\bibitem[DLS17]{delellis_szekelyhidi16}
\bysame, \emph{High dimensionality and h-principle in {PDE}}, Bull. Amer. Math.
  Soc. (N.S.) \textbf{54} (2017), no.~2, 247--282. \MR{3619726}

\bibitem[DM99]{dacorogna_marcellini99}
B.~Dacorogna and P.~Marcellini, \emph{Implicit partial differential equations},
  Progress in Nonlinear Differential Equations and their Applications, vol.~37,
  Birkh\"{a}user Boston, Inc., Boston, MA, 1999. \MR{1702252}

\bibitem[Gro86]{gromov_pdr}
M.~Gromov, \emph{Partial differential relations}, Ergebnisse der Mathematik und
  ihrer Grenzgebiete (3) [Results in Mathematics and Related Areas (3)],
  vol.~9, Springer-Verlag, Berlin, 1986. \MR{864505}

\bibitem[Gro96]{gromov_carnot_caratheodory_space_seen_from_within}
\bysame, \emph{Carnot-{C}arath\'{e}odory spaces seen from within},
  Sub-{R}iemannian geometry, Progr. Math., vol. 144, Birkh\"{a}user, Basel,
  1996, pp.~79--323. \MR{1421823}

\bibitem[Hei95]{heinonen_calculus_carnot_groups}
J.~Heinonen, \emph{Calculus on {C}arnot groups}, Fall {S}chool in {A}nalysis
  ({J}yv\"{a}skyl\"{a}, 1994), Report, vol.~68, Univ. Jyv\"{a}skyl\"{a},
  Jyv\"{a}skyl\"{a}, 1995, pp.~1--31. \MR{1351042}

\bibitem[HK98]{heinonen_koskela}
J.~Heinonen and P.~Koskela, \emph{Quasiconformal maps in metric spaces with
  controlled geometry}, Acta Math. \textbf{181} (1998), no.~1, 1--61.
  \MR{1654771}

\bibitem[IM93]{iwaniec_martin_quasiregular_even_dimensions}
T.~Iwaniec and G.~Martin, \emph{Quasiregular mappings in even dimensions}, Acta
  Math. \textbf{170} (1993), no.~1, 29--81. \MR{1208562}

\bibitem[Ise18]{isett18}
P.~Isett, \emph{A proof of {O}nsager's conjecture}, Ann. of Math. (2)
  \textbf{188} (2018), no.~3, 871--963. \MR{3866888}

\bibitem[KMSX]{kmsx_infinitesimally_split_globally_split}
B.~Kleiner, S.~M\"uller, L\'{a}szl\'{o} Sz{\'{e}}kelyhidi, Jr., and X.~Xie,
  \emph{Sobolev mappings of {E}uclidean space and product structure}, in
  preparation.

\bibitem[KMX]{kmx_approximation_low_p}
B.~Kleiner, S.~M\"{u}ller, and X.~Xie, \emph{Pansu pullback and exterior
  differentiation for {S}obolev maps on {C}arnot groups}, arXiv:2007.06694.

\bibitem[KMX20]{KMX1}
\bysame, \emph{Pansu pullback and rigidity of mappings between {C}arnot
  groups}, 2020.

\bibitem[KMX21a]{kmx_rumin}
\bysame, \emph{Sobolev mappings and the {R}umin complex}, arXiv:2101.04528,
  2021.

\bibitem[KMX21b]{KMX2}
\bysame, \emph{Sobolev mappings between nonrigid {C}arnot groups},
  arXiv:2112.01866, 2021.

\bibitem[Kob95]{kobayashi_transformation_groups}
S.~Kobayashi, \emph{Transformation groups in differential geometry}, Classics
  in Mathematics, Springer-Verlag, Berlin, 1995, Reprint of the 1972 edition.
  \MR{1336823}

\bibitem[Kor96]{koranyi_math_review}
A.~Kor\'{a}nyi, Math review MR 1334873, 1996.

\bibitem[Lel]{lelmi}
J.~Lelmi, \emph{On the smoothness of {$C^1$}-contact maps in {$C^\infty$}-rigid
  {C}arnot groups}, arXiv:2006.06772, to appear in Indiana Univ. Math. J.

\bibitem[McC13]{mccallum_local_to_global}
Rupert McCallum, \emph{A local-to-global result for topological spherical
  buildings}, Adv. Geom. \textbf{13} (2013), no.~3, 435--448. \MR{3100919}

\bibitem[Mon02]{montgomery_book}
R.~Montgomery, \emph{A tour of subriemannian geometries, their geodesics and
  applications}, Mathematical Surveys and Monographs, vol.~91, American
  Mathematical Society, Providence, RI, 2002. \MR{1867362}

\bibitem[Mos73]{mostow_strong_rigidity}
G.~D. Mostow, \emph{Strong rigidity of locally symmetric spaces}, Annals of
  Mathematics Studies, No. 78, Princeton University Press, Princeton, N.J.;
  University of Tokyo Press, Tokyo, 1973. \MR{0385004}

\bibitem[M{\v{S}}03]{muller_sverak03}
S.~M\"{u}ller and V.~{\v{S}}ver\'{a}k, \emph{Convex integration for {L}ipschitz
  mappings and counterexamples to regularity}, Ann. of Math. (2) \textbf{157}
  (2003), no.~3, 715--742. \MR{1983780}

\bibitem[M{\"{u}}l99]{muller99}
S.~M{\"{u}}ller, \emph{Variational models for microstructure and phase
  transitions}, Calculus of variations and geometric evolution problems
  ({C}etraro, 1996), Lecture Notes in Math., vol. 1713, Springer, Berlin, 1999,
  pp.~85--210. \MR{1731640}

\bibitem[Mur81]{murat81}
F.~Murat, \emph{Compacit\'{e} par compensation: condition n\'{e}cessaire et
  suffisante de continuit\'{e} faible sous une hypoth\`ese de rang constant},
  Ann. Scuola Norm. Sup. Pisa Cl. Sci. (4) \textbf{8} (1981), no.~1, 69--102.
  \MR{616901}

\bibitem[Nas54]{nash54}
J.~Nash, \emph{{$C^1$} isometric imbeddings}, Ann. of Math. (2) \textbf{60}
  (1954), 383--396. \MR{65993}

\bibitem[OW11a]{ottazzi_warhurst_algebraic_prolongation}
A.~Ottazzi and B.~Warhurst, \emph{Algebraic prolongation and rigidity of
  {C}arnot groups}, Monatsh. Math. \textbf{162} (2011), no.~2, 179--195.
  \MR{2769886}

\bibitem[OW11b]{ottazzi_warhurst}
\bysame, \emph{Contact and 1-quasiconformal maps on {C}arnot groups}, J. Lie
  Theory \textbf{21} (2011), no.~4, 787--811. \MR{2917692}

\bibitem[Pal57]{palais_global_formulation}
R.~S. Palais, \emph{A global formulation of the {L}ie theory of transformation
  groups}, Mem. Amer. Math. Soc. \textbf{22} (1957), iii+123. \MR{121424}

\bibitem[Pan89]{pansu}
P.~Pansu, \emph{M\'{e}triques de {C}arnot-{C}arath\'{e}odory et
  quasiisom\'{e}tries des espaces sym\'{e}triques de rang un}, Ann. of Math.
  (2) \textbf{129} (1989), no.~1, 1--60. \MR{979599}

\bibitem[Res89]{reshetnyak_space_mappings_bounded_distortion}
Yu.~G. Reshetnyak, \emph{Space mappings with bounded distortion}, Translations
  of Mathematical Monographs, vol.~73, American Mathematical Society,
  Providence, RI, 1989, Translated from the Russian by H. H. McFaden.
  \MR{994644}

\bibitem[Sch93]{scheffer93}
V.~Scheffer, \emph{An inviscid flow with compact support in space-time}, J.
  Geom. Anal. \textbf{3} (1993), no.~4, 343--401. \MR{1231007}

\bibitem[SS65]{singer_sternberg_infinite_groups_lie_cartan}
I.~M. Singer and S.~Sternberg, \emph{The infinite groups of {L}ie and {C}artan.
  {I}. {T}he transitive groups}, J. Analyse Math. \textbf{15} (1965), 1--114.
  \MR{217822}

\bibitem[Tan70]{tanaka_differential_systems_graded_lie_algebras}
N.~Tanaka, \emph{On differential systems, graded {L}ie algebras and
  pseudogroups}, J. Math. Kyoto Univ. \textbf{10} (1970), 1--82. \MR{266258}

\bibitem[Tar79]{tartar79}
L.~Tartar, \emph{Compensated compactness and applications to partial
  differential equations}, Nonlinear analysis and mechanics: {H}eriot-{W}att
  {S}ymposium, {V}ol. {IV}, Res. Notes in Math., vol.~39, Pitman, Boston,
  Mass.-London, 1979, pp.~136--212. \MR{584398}

\bibitem[Tit72]{tits_alternative}
J.~Tits, \emph{Free subgroups in linear groups}, J. Algebra \textbf{20} (1972),
  250--270. \MR{286898}

\bibitem[vCdM{\etalchar{+}}07]{cowling_cap_et_al_heisenberg_sl3r_rigidity}
Andreas \v{C}ap, Michael~G. Cowling, Filippo de~Mari, Michael Eastwood, and
  Rupert McCallum, \emph{The {H}eisenberg group, {${\rm SL}(3,\Bbb R)$}, and
  rigidity}, Harmonic analysis, group representations, automorphic forms and
  invariant theory, Lect. Notes Ser. Inst. Math. Sci. Natl. Univ. Singap.,
  vol.~12, World Sci. Publ., Hackensack, NJ, 2007, pp.~41--52. \MR{2401809}

\bibitem[Vod07a]{vodopyanov_carnot_manifolds}
S.~K. Vodopyanov, \emph{Differentiability of mappings in the geometry of
  {C}arnot manifolds}, Sibirsk. Mat. Zh. \textbf{48} (2007), no.~2, 251--271.
  \MR{2330058}

\bibitem[Vod07b]{vodopyanov_foundations}
\bysame, \emph{Foundations of the theory of mappings with bounded distortion on
  {C}arnot groups}, The interaction of analysis and geometry, Contemp. Math.,
  vol. 424, Amer. Math. Soc., Providence, RI, 2007, pp.~303--344. \MR{2316342}

\bibitem[Yam93]{yamaguchi_differential_systems}
K.~Yamaguchi, \emph{Differential systems associated with simple graded {L}ie
  algebras}, Progress in differential geometry, Adv. Stud. Pure Math., vol.~22,
  Math. Soc. Japan, Tokyo, 1993, pp.~413--494. \MR{1274961}

\end{thebibliography}
\bibliographystyle{amsalpha}

\end{document}